\newcommand{\<}{\langle}
\renewcommand{\>}{\rangle}
\renewcommand{\l}{\lambda}
\newcommand{\T}{\mathbb T}
\newcommand{\oq}{\ {\raise 7pt\hbox{${\scriptstyle\circ}$}}
\kern -7pt{
\hbox{$Q$}}}
\newcommand{\R}{ \mathbb R}
\newcommand{\Q}{ \mathbb Q}
\newcommand{\C}{ \mathbb C}
\renewcommand{\T}{\mathbb T}
\newcommand {\bze}{\mathbf 0}
\newcommand {\bn}{\mathbf n}
\newcommand{\rbar}{\overline{\mathbb R}}
\newcommand{\1}
{{\,\vrule depth3pt height9pt}{\vrule depth3pt height9pt}
{\vrule depth3pt height9pt}{\vrule depth3pt height9pt}\,}
\DeclareMathOperator {\dist} {{dist}}
\DeclareMathOperator{\supp}{{supp}}
\DeclareMathOperator{\ran}{{ran}}
\newtheorem{thm}{Theorem}[section]
\newtheorem{cor}[thm]{Corollary}
\newtheorem{cla}[thm]{Claim}
\newtheorem{lem}[thm]{Lemma}
\newtheorem{prop}[thm]{Proposition}
\theoremstyle{definition}
\newtheorem{defn}[thm]{Definition}
\newtheorem{rem}[thm]{Remark}
\numberwithin{equation}{section}
\newcommand{\bee}{\begin{equation}}
\newcommand{\ene}{\end{equation}}
\newcommand{\bees}{\begin{equation*}}
\newcommand{\enes}{\end{equation*}}
\newcommand{\bes}{\begin{split}}
\newcommand{\ens}{\end{split}}
\newcommand{\bet}{\begin{thm}}
\newcommand{\ent}{\end{thm}}
\newcommand{\bel}{\begin{lem}}
\newcommand{\enl}{\end{lem}}
\newcommand{\bec}{\begin{cor}}
\newcommand{\enc}{\end{cor}}
\newcommand{\becl}{\begin{cla}}
\newcommand{\encl}{\end{cla}}
\newcommand{\bep}{\begin{proof}}
\newcommand{\enp}{\end{proof}}
\newcommand{\ber}{\begin{rem}}
\newcommand{\enr}{\end{rem}}
\newcommand{\ep}{\varepsilon}
\newcommand{\Z}{\mathbb Z}
\renewcommand{\l}{\left}
\renewcommand{\r}{\right}
\DeclareMathOperator{\dom}{{Dom}}
\def\square{\RIfM@\bgroup\else$\bgroup\aftergroup$\fi
  \vcenter{\hrule\hbox{\vrule\@height.6em\kern.6em\vrule}\hrule}\egroup}
\newcommand{\one}{\mathbf{1}}
\begin{document}

\title[Gaps in the spectra]
{On gaps in the spectra of quasiperiodic Schr\"odinger operators with discontinuous monotone potentials}
\author[I. Kachkovskiy]
{Ilya Kachkovskiy}
\address{Department of Mathematics\\ Michigan State University\\
Wells Hall, 619 Red Cedar Rd\\ East Lansing, MI\\ 48824\\ USA}
\email{ikachkov@msu.edu}
\author[L. Parnovski]
{Leonid Parnovski}
\address{Department of Mathematics\\ University College London\\
Gower Street\\ London\\ WC1E 6BT\\ UK}
\email{leonid@math.ucl.ac.uk}
\author[R. Shterenberg]
{Roman Shterenberg}
\address{Department of Mathematics\\ University of Alabama, Birminghan\\
Campbell Hall\\1300 University Blvd\\ Birmingham, AL\\ 35294\\USA }
\email{shterenb@math.uab.edu}



\date{\today}



\begin{abstract}
We show that, for one-dimensional discrete Schr\"odinger operators, stability of Anderson localization under a class of rank one perturbations implies absence of intervals in spectra. The argument is based on well-known results of Gordon and del Rio--Makarov--Simon, combined with a way to consider perturbations whose ranges are not necessarily cyclic. The main application of the results is showing that a class of quasiperiodic operators with sawtooth-like potentials, for which such a version of stable localization is known, has Cantor spectra. We also obtain several results on gap filling under rank one perturbations for some general (not necessarily monotone) classes of quasiperiodic operators with discontinuous potentials.
\end{abstract}
\maketitle
\section{Introduction and statements of the results}
\subsection{Motivation and some of the main results}
Let $f\colon \R\to [-\infty,+\infty)$ be a $1$-periodic function, and $H(x)$ be a family of discrete quasiperiodic Schr\"odinger operators on $\ell^2(\Z)$:
\bee
\label{eq_h_def_qp}
(H(x)\psi)(n)=\psi(n+1)+\psi(n-1)+f(x+n\alpha)\psi(n),\quad x\in [0,1).
\ene
Let $\gamma>0$. We will say that $f$ is $\gamma$-{\it monotone} if
\bee
\label{eq_gamma_monotone}
f(y)-f(x)\ge \gamma(y-x),\quad 0\le x<y<1.
\ene
Two typical examples of $\gamma$-monotone potentials are $f(x)=\{x\}=x-\lfloor x \rfloor$ (the sawtooth potential) and $f(x)=\lambda \cot(\pi x)$, $\lambda>0$ (the Maryland model). We will say that $f$ is {\it sawtooth-type} if $f$ is $\gamma$-monotone and $f|_{[0,1)}$ extends to a (bounded) continuous map from $[0,1]$ to $\R$. Similarly, $f$ is {\it Maryland-type} if it is $\gamma$-monotone and $f|_{[0,1)}$ extends to a homeomorphism between $[0,1]$ and $[-\infty,+\infty]$.

In a series of works \cite{JK1,K,Xiaowen,JK2}, it is shown that a large class of one-dimensional quasiperiodic operators with $\gamma$-monotone potentials satisfies Anderson localization. As an example, suppose that $\alpha$ is Diophantine:
$$
\dist(n\alpha,\Z)\ge C|n|^{-\tau}, \quad \forall n\in \Z\setminus\{0\}.
$$
Then the results of \cite{JK1,K} show that Anderson localization holds for sawtooth-type and Maryland-type $f$ for almost all $x\in [0,1)$ (see Proposition \ref{prop_monotone_localization} and Remark \ref{rem_localizations} below for more complete statements).

Denote by $\Sigma_f$ the almost sure spectrum of the operator \eqref{eq_h_def_qp}. Shortly after the results of \cite{JK1} were announced, a natural question was raised: suppose that $f$ is sawtooth-type. Can $\Sigma_f$ contain an interval, or should it be a Cantor set? In the referee report on \cite{JK1}, it was suggested that, for sufficiently large $\gamma$ and perhaps under some smoothness assumptions on $f$, one should be able to show that $\Sigma_f$ is an interval.

One of our main results is Theorem \ref{th_main_cantor}, which provides some answers to this question in the regime of localization for $\gamma$-monotone potentials. It is convenient to postpone the complete formulation of this result until Section 2. However, we state the following as a separate theorem (in fact, contained in Corollary \ref{cor_interval} of the main Theorem \ref{th_main_cantor}).
\begin{thm}
\label{th_main_small}
Let $f$ be sawtooth-type function and suppose that $\alpha$ is Diophantine. Then $\sigma(H(x))$ is a Cantor set $($in particular, it contains no intervals$)$.
\end{thm}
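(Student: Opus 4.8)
The plan is to derive Cantor spectrum from the known Anderson localization for sawtooth-type potentials via a rank one perturbation argument; this is exactly the content of the abstract machinery culminating in Theorem~\ref{th_main_cantor} and Corollary~\ref{cor_interval}. Write $\Sigma_f$ for the almost sure spectrum; by ergodicity $\sigma(H(x))=\Sigma_f$ for a.e.\ $x$, the refinement to every $x$ following from $x$-independence of the spectrum, so it is enough to show that $\Sigma_f$ is a Cantor set. Since the integrated density of states of a one-dimensional ergodic operator is continuous (indeed log-H\"older continuous, by a classical estimate), the density of states measure has no atoms, and hence $\Sigma_f$---its topological support---has no isolated points. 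Thus the entire content reduces to showing that $\Sigma_f$ has empty interior, i.e.\ contains no nondegenerate interval.

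To prove this I would argue by contradiction: suppose $I=[a,b]\subset\Sigma_f$ with $a<b$. Fix a site, say $0$, and consider the rank one perturbations $H_\lambda(x)=H(x)+\lambda\langle\delta_0,\cdot\rangle\delta_0$, $\lambda\in\R$. The first input is stability of localization: the localization proofs of \cite{JK1,K} for $\gamma$-monotone potentials only use that $y\mapsto f(y)$ is monotone, and hence that in a box of length $N$ only $O(\varepsilon N+1)$ of the values $f(x+n\alpha)$ lie within $\varepsilon$ of a given energy; modifying a single diagonal entry changes this count by at most one, so the argument should go through essentially verbatim and $H_\lambda(x)$ satisfies Anderson localization for a.e.\ $x$ and (I expect) every $\lambda\in\R$, or at least for a topologically generic set of $\lambda$. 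Making this precise, with attention to uniformity in $\lambda$, is the role of Proposition~\ref{prop_monotone_localization}. The second input is the rank one perturbation principle of Gordon and of del Rio--Makarov--Simon (note that Gordon's lemma in its classical almost-periodic form does not apply here, $\alpha$ being Diophantine; it is the rank one / boundary-condition circle of ideas that is relevant): because $I\subset\Sigma_f=\sigma_{\mathrm{ess}}(H_0(x))$ is invariant under rank one perturbations and carries no absolutely continuous spectrum---the latter being $\lambda$-independent by Aronszajn--Donoghue, hence forced to vanish on $I$ once the spectrum is pure point there for a positive-measure set of $\lambda$---pure point spectrum in $I$ cannot persist for a topologically generic set of $\lambda$; in fact the set of $\lambda$ for which $H_\lambda(x)$ has purely singular continuous spectrum inside $I$ is then a dense $G_\delta$. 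For $x$ in the common full-measure set the two inputs contradict one another, so no interval lies in $\Sigma_f$, and the theorem follows.

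The main obstacle I anticipate is not the localization input---which is essentially a robustness check on \cite{JK1,K} plus care about $\lambda$-uniformity---but the rank one perturbation step in the present setting: $\delta_0$ is \emph{not} a cyclic vector for $H(x)$ on $\ell^2(\Z)$, whereas the del Rio--Makarov--Simon genericity-of-singular-continuity result is classically formulated for perturbations along a cyclic vector. I would handle the loss of cyclicity either by passing to the half-line operator on $\ell^2(\N)$ with a Dirichlet condition---for which $\delta_1$ is cyclic, $\sigma_{\mathrm{ess}}$ still contains $I$, and localization of $H(x)$ transfers---or, directly on the whole line, by running the Aronszajn--Donoghue/Simon--Wolff analysis on the two-dimensional generating subspace $\operatorname{span}\{\delta_0,\delta_1\}$ and tracking the two half-line spectral measures separately. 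Pinning down exactly how much of the rank one perturbation theory survives the loss of cyclicity---enough to still force a $G_\delta$'s worth of singular continuous spectrum in any interval of the essential spectrum---is the crux, and is precisely the ``way to consider perturbations whose ranges are not necessarily cyclic'' referred to in the abstract.
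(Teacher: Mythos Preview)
Your overall strategy is the paper's: combine stable localization (Proposition~\ref{prop_monotone_localization}) with the Gordon/del~Rio--Makarov--Simon principle (Proposition~\ref{prop_gordon}) to rule out intervals, and get absence of isolated points from continuity of the IDS. You also correctly locate the crux---$e_0$ is not cyclic on $\ell^2(\Z)$---but you do not actually resolve it, and this is the genuine gap. Your half-line suggestion assumes that ``localization of $H(x)$ transfers'' to $H^+(x)$; it does not, at least not for free: a polynomially bounded solution on $\Z_+$ with $\psi(0)=0$ extends by the recursion to a solution on all of $\Z$, but nothing forces the extension to be polynomially bounded on $\Z_-$, so Proposition~\ref{prop_monotone_localization} is not directly applicable. (One can rescue this by regarding the Dirichlet half-line operator as the $V(0)=\infty$ member of the family and invoking Remark~\ref{rem_localizations}(5), but that already needs the infinite-coupling apparatus of Section~3 and is not what you wrote.) The ``two-dimensional generating subspace'' idea is too vague to assess. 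The paper's actual fix is different and self-contained: split $\ell^2(\Z)=\mathcal C\oplus\mathcal C^\perp$ with $\mathcal C$ the cyclic subspace of $e_0$; apply Proposition~\ref{prop_gordon} to $H|_{\mathcal C}$; and on each component of $J\setminus\sigma(H|_{\mathcal C})$ observe that every eigenfunction of $H$ vanishes at $0$, then use the quantitative Lemma~\ref{lemma_main}---built from test vectors $a\varphi_-+b\psi_+$ with $(\Delta f)(0)=0$---to show that such eigenvalues cannot accumulate. That lemma is precisely the missing ingredient you allude to in your last paragraph.

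A secondary point: your robustness claim for localization (``modifying a single diagonal entry changes this count by at most one'') is heuristic and would require reopening the proof in \cite{JK1,K,JK2}. The paper avoids this entirely: it chooses the phase $x_0$ non-generic, i.e.\ on the orbit of the discontinuity, and perturbs only within the jump interval $[f(y_0),f(y_0+0)]$. The perturbed sampling function $f_t$ then remains $\gamma$-monotone with the same Lyapunov exponent, so Proposition~\ref{prop_monotone_localization} applies verbatim to the entire one-parameter family, delivering the needed stability of pure point spectrum for $t$ in an interval without any new work.
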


We note that the question of Cantor spectrum for quasiperiodic operators has been an active research area. Some recent results in this direction include \cite{ten_martini,ten_martini_dry,beckus,DL,GJY}. See also the review \cite{fillman1} and references therein.

It is interesting to compare these results with those for Maryland-type potentials, which sometimes do not rely on localization and/or $\gamma$-monotonicity. We will call $f$ {\it weak Maryland-type} if $f|_{[0,1)}$ extends to a continuous map from $[0,1]$ to $[-\infty,+\infty]$ and $f(0)=-\infty$, $f(1-0)=+\infty$. We will also call $f$ {\it weak sawtooth-type} if $f|_{[0,1)}$ extends to a (bounded) continuous map from $[0,1]$ to $\R$, with
$$
-\infty<f(0)<f(1-0)<+\infty.
$$
It was shown in \cite{K} that, for weak Maryland-type $f$ and $\alpha\in \R\setminus\Q$, one has $\sigma(H(x))=\R$. One can argue by identifying $\R\cup\{\infty\}=\rbar$ with a circle, somewhat informally, that the range of $f$, in the case of weak sawtooth-type potentials, has a ``gap'' $\R\setminus f([0,1))$, which is not present in the Maryland-type case. In other words, one ``gap'' in the values of $f$ produces multiple gaps in the spectra. In the second part of our paper, we partially confirm this intuition by demonstrating the following {\it gap filling} phenomenon. Let $f$ be a weak sawtooth-type function. For $t\in \R\setminus [f(0),f(1-0)]$, consider the following family:
\bee
\label{eq_ft_def}
f_t(x):=\begin{cases}
f(x),&x\in (0,1);\\
t,&x=0.
\end{cases}
\ene
Let $H_t(x)$ be $H(x)$ with $f$ replaced by $f_t$:
\bee
\label{eq_ht_def_qp}
(H_t(x)\psi)(n)=\psi(n+1)+\psi(n-1)+f_t(x+n\alpha)\psi(n),\quad x\in [0,1),\quad t\in \R\setminus f((0,1)).
\ene
\begin{thm}
\label{th_gap_filling_small}
Let $f$ be a weak sawtooth-type function and $\alpha\in\R\setminus \Q$. Then
\bee
\label{eq_gap_filling_small}
\overline{\bigcup\limits_{\R\setminus [f(0),f(1-0)]}\sigma(H_t(0))}=\R.
\ene
\end{thm}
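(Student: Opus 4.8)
The plan is to realize $\{H_t(0)\}_t$ as a rank one perturbation family. Since $\alpha\notin\Q$, $\{n\alpha\}\neq 0$ for every $n\neq 0$, so $f_t(\{n\alpha\})=f(\{n\alpha\})$ for $n\neq 0$ while $f_t(0)=t$; hence
\[
H_t(0)=H_\star+(t-f(0))\,\langle\delta_0,\,\cdot\,\rangle\,\delta_0 ,\qquad H_\star:=H(0),
\]
a rank one perturbation of the fixed bounded self-adjoint operator $H_\star$ on $\ell^2(\Z)$, where $\delta_0$ is the delta at the origin. Two facts come for free. First, the essential spectrum $S:=\sigma_{\mathrm{ess}}(H_t(0))$ is independent of $t$ (Weyl). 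Second, as $t\to+\infty$, resp.\ $t\to-\infty$, along $J:=\R\setminus[f(0),f(1-0)]$ — which is unbounded in both directions — $H_t(0)$ has an eigenvalue tending to $+\infty$, resp.\ $-\infty$, so $\overline{\bigcup_{t\in J}\sigma(H_t(0))}$ is unbounded above and below. (A weak density statement is also cheap: from the spectral averaging identity $\int_\R\langle\delta_0,\mathbf 1_\Omega(H_t(0))\delta_0\rangle\,dt=|\Omega|$ and the bound $\le 1$ on the integrand, $\bigcup_{t\in J}\sigma(H_t(0))$ already meets every interval of length larger than $L_0:=f(1-0)-f(0)$; the point is to remove the ``$L_0$''.)

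The next step is to reduce everything to a statement about the operators with $t$ in the \emph{excluded} interval, namely:
\[
\text{if }\ \sigma\bigl(H_{t_0}(0)\bigr)=S\ \text{ for every }\ t_0\in[f(0),f(1-0)],\ \text{ then Theorem~\ref{th_gap_filling_small} holds.}
\]
Indeed, on any component $(a,b)$ of $\R\setminus S$ the Borel transform $g(E):=\langle\delta_0,(H_\star-E)^{-1}\delta_0\rangle$ is real analytic, strictly increasing (as $g'(E)=\|(H_\star-E)^{-1}\delta_0\|^2>0$), hence a homeomorphism onto an interval $(g(a^+),g(b^-))$, and has no poles there because, by hypothesis for $t_0=f(0)$, $H_\star=H(0)$ has no eigenvalue in $(a,b)$; the eigenvalues of $H_t(0)$ in $(a,b)$ are exactly the roots of $1+(t-f(0))\,g(E)=0$, i.e.\ $E=g^{-1}\!\big(-1/(t-f(0))\big)$. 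Writing $s:=t-f(0)$, the hypothesis says $1+s\,g(E)=0$ has no root in $(a,b)$ for $s\in(0,L_0]$, i.e.\ $(-\infty,-1/L_0]\cap(g(a^+),g(b^-))=\varnothing$, i.e.\ $g(a^+)\ge -1/L_0$. On the other hand $\{-1/(t-f(0)):t\in J\}=(-1/L_0,+\infty)\setminus\{0\}$, which is dense in $[-1/L_0,+\infty)\supseteq(g(a^+),g(b^-))$; applying $g^{-1}$ shows $\bigcup_{t\in J}\sigma(H_t(0))$ is dense in $(a,b)$. Running this over all components (for the unbounded ones the same reasoning, plus the escaping eigenvalue, applies) gives $\overline{\bigcup_{t\in J}\sigma(H_t(0))}\supseteq S\cup(\R\setminus S)=\R$.

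It remains to prove the displayed claim. For $t_0=f(0)$ and $t_0=f(1-0)$ it is a hull/recurrence fact: $f$ being continuous on $[0,1]$ and $\{n\alpha\}$ accumulating at $0^+$, resp.\ at $1^-$, the finite configuration of $H_{f(0)}(0)=H(0)$, resp.\ of $H_{f(1-0)}(0)$, near the origin recurs near $+\infty$ along suitable sites $n_k$; truncating a putative $\ell^2$ eigenfunction and translating it to $n_k$ produces a Weyl sequence converging weakly to $0$, so these operators are purely essential. The main obstacle is $t_0\in(f(0),f(1-0))$. Here the naive recurrence fails: writing $t_0=f(y_0)$ with $y_0\in(0,1)$ (intermediate value theorem), a distant site $n_k$ with potential close to $t_0$ forces $\{n_k\alpha\}$ close to $y_0$, and then the neighbouring potentials are close to $f(\{y_0+m\alpha\})$, not to $f(\{m\alpha\})$, so the defect at the origin does not recur verbatim and a gap eigenvalue is a priori possible. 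I would attack this by a finer, two-sided argument at the level of the half-line Weyl $m$-functions of $H_{t_0}(0)$ seen from the origin: since $t_0$ lies \emph{interior} to the range of $f$, the orbit $\{n\alpha\}$ meets $\{n:\ f(\{n\alpha\})\le t_0\}$ and $\{n:\ f(\{n\alpha\})\ge t_0\}$ in arbitrarily long runs arbitrarily far out, and one should be able to use equidistribution of $\{n\alpha\}$ together with $\gamma$-monotonicity (or merely uniform continuity) of $f$ to build approximate eigensolutions on long one-sided blocks forcing $-1/(t_0-f(0))$ to stay out of the range of $g$ over any spectral gap. This ``a gap in the \emph{values} of $f$ produces — and filling it fills — the gaps in the spectrum'' mechanism is exactly what the introduction advertises, and is the step I expect to require the general framework of Section 2.
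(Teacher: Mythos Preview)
Your reduction in the second paragraph is correct, and the endpoint cases $t_0=f(0)$ and $t_0=f(1-0)$ are handled properly via recurrence (these are exactly $H(0+0)$ and $H(0-0)$, both in the hull). However, the interior case $t_0\in(f(0),f(1-0))$ is a genuine gap, and in fact your reduction is not a reduction at all: the claim ``$\sigma(H_{t_0}(0))=S$ for every $t_0\in[f(0),f(1-0)]$'' is \emph{equivalent} to the theorem. Indeed, by rank one theory the eigenvalue in a fixed gap $(a,b)$ exists precisely for $t$ in one open arc $(t_-,t_+)\subset\rbar$, and the endpoint cases only tell you that $f(0),f(1-0)\notin(t_-,t_+)$; this leaves exactly the dichotomy $(t_-,t_+)\subset(f(0),f(1-0))$ versus $(t_-,t_+)\subset(f(1-0),f(0))$ (through $\infty$), which is precisely the question of whether $J$ fills the gap or not. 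So you have restated the problem, not reduced it.

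Your proposed attack on the interior case does not work. The assertion that the orbit meets $\{n:f(\{n\alpha\})\le t_0\}$ in ``arbitrarily long runs'' is false for generic $\alpha$ (for the sawtooth $f(x)=\{x\}$ and $t_0=1/2$ the three-distance theorem bounds such runs uniformly). The appeal to $\gamma$-monotonicity is also misplaced: weak sawtooth-type requires only continuity on $[0,1]$ and $f(0)<f(1-0)$, with no monotonicity whatsoever. Since the operator $H_{t_0}(0)$ for interior $t_0$ is \emph{not} in the hull of $\{H(x)\}$ (its finite-range pattern near $0$ recurs nowhere, as you yourself observe), there is no minimality or Weyl-sequence argument available, and it is not clear how half-line $m$-functions alone could supply the missing inequality $g(a^+)\ge -1/L_0$.

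The paper's proof is entirely different: it never analyses $H_{t_0}(0)$ for $t_0\in[f(0),f(1-0)]$. Instead it compactifies the parameter space by allowing $t=\infty$ (Section~3), builds an extended operator family over a circle $\mathcal S$ (Section~4.3--4.4), proves the union of spectra is continuous in $\alpha$ at irrational points (Theorem~4.6), and verifies the result for rational $\alpha$ by a winding-number argument on the Cayley transforms of the periodic approximants (Theorem~4.7 and Proposition~4.8). The non-triviality of the homotopy class is exactly what forces the arc through $\infty$ to be the one carrying the eigenvalues.
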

The closure in the left hand side of \eqref{eq_gap_filling_small} would not be necessary if one included $t=\infty$ into consideration by allowing infinite potentials. Again, we postpone the more general setting  until Section 4 (see, in particular, Theorems \ref{th_gap_filling} and \ref{th_gap_filling_2}), where this inclusion is performed systematically. In some situations, one can even prove that every point of $\R$ is an actual eigenvalue of some operator from our  family, see Corollary \ref{cor_all_eigenvalues}. Note that, if one considers the union over all $t\in \R$ in \eqref{eq_gap_filling_small}, the result immediately follows from spectral averaging (for example, \cite[Theorem 11.8]{Simon}), which is why we assume $f(0)<f(1-0)$.

The general version of Theorem \ref{th_gap_filling_small} also holds in higher dimensions for operators on $\Z^d$. A similar question about Theorem \ref{th_main_small} appears to be more difficult. While the general argument behind Theorem \ref{th_main_small} is abstract, it relies on two properties of the operator: Anderson localization that is stable under a family of rank one perturbations (that is, replacing $f$ by $f_t$) and the fact that a standard basis vector is cyclic on a sufficiently large subspace. In Theorem \ref{th_main_small}, both of these properties rely on the one-dimensional structure of the operators, the first one through \cite{JK2}, and the second one through Lemma \ref{lemma_main} below. The first question has been recently addressed in \cite{Shi,KPS3}.

\subsection{Stability of localization and absence of intervals}We will now state the main abstract result that will be used to prove Theorem \ref{th_main_small}. It relies on the following classical result that, as far as the authors are aware, has been discovered independently in \cite{G1,G2,drms}, see also \cite{djls}.
\begin{prop}
\label{prop_gordon}
Let $A$ be a self-adjoint operator on a Hilbert space with simple spectrum and a cyclic vector $\varphi$. Let $P$ be the orthogonal projection onto the span of $\{\varphi\}$. Then, for every $t$ in some dense $G_{\delta}$-subset of $\R$, all eigenvalues of the operator $A+tP$ are isolated.
\end{prop}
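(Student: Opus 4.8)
The plan is to place $A$ in its canonical multiplication model, describe the eigenvalues of $A+tP$ via Aronszajn--Donoghue theory, and then invoke the Gordon and del Rio--Makarov--Simon argument that the exceptional set of $t$ is meager.

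First I would normalize $\|\varphi\|=1$. Since $A$ has simple spectrum with cyclic vector $\varphi$, the spectral theorem makes $A$ unitarily equivalent to multiplication by $x$ on $L^2(\R,d\mu)$ with $\varphi$ corresponding to $\mathbf 1$, where $\mu$ is the (probability) spectral measure of $\varphi$; under this identification $A+tP$ becomes the rank one perturbation $M_x+t\langle\,\cdot\,,\mathbf 1\rangle\mathbf 1$, and one records the standard fact that $\varphi$ remains cyclic for every $A+tP$. Write $F(z)=\int(x-z)^{-1}\,d\mu(x)$ ($\im z>0$) for the Borel transform, $G(E)=\int(x-E)^{-2}\,d\mu(x)\in(0,+\infty]$, and $\mathcal G=\{E\in\R:G(E)<\infty\}$; for $E\in\mathcal G$ the boundary value $F(E+i0):=\lim_{\e\downarrow0}F(E+i\e)=\int(x-E)^{-1}\,d\mu(x)$ is a finite real number. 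Aronszajn--Donoghue theory then identifies, for $t\neq0$, the point spectrum of $A+tP$ as the level set
\[
\sigma_{\mathrm p}(A+tP)=\{E\in\mathcal G:\ F(E+i0)=-1/t\},
\]
each eigenvalue being simple with eigenvector $x\mapsto(x-E)^{-1}$ and spectral weight $\bigl(t^2G(E)\bigr)^{-1}$, so that $\sum_{E\in\sigma_{\mathrm p}(A+tP)}G(E)^{-1}\le t^2$.

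Accordingly, ``all eigenvalues of $A+tP$ are isolated'' asserts that the level set $L_t:=\{E\in\mathcal G:F(E+i0)=-1/t\}$ has no point that is an accumulation point of $L_t$, and the claim is that this holds for $t$ in a dense $G_\delta$ subset of $\R$ (the value $t=0$ causes no trouble, since a dense $G_\delta$ of $\R\setminus\{0\}$ is one of $\R$). A few elementary facts organize the rest: on each connected component of $\R\setminus\supp\mu$, $F$ is real-analytic with $F'=G>0$, hence strictly monotone, so each such component contains at most one point of $L_t$ and any accumulation point of $L_t$ must lie in $\supp\mu$; $G$ is lower semicontinuous, so each set $\{G\le n\}$, $n\in\N$, is closed; and from the identity $F(E_1+i0)-F(E_2+i0)=(E_1-E_2)\int(x-E_1)^{-1}(x-E_2)^{-1}\,d\mu(x)$ together with Cauchy--Schwarz, $E\mapsto F(E+i0)$ is $n$-Lipschitz on $\{G\le n\}$. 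The remaining step is a Baire category statement: the set $B$ of $t$ for which $A+tP$ has a non-isolated eigenvalue is meager; then its complement contains a dense $G_\delta$ on which every eigenvalue of $A+tP$ is isolated. Since an eigenvalue $E^{*}$ of $A+tP$ is non-isolated precisely when $E^{*}\in\mathcal G$, $F(E^{*}+i0)=-1/t$, and $E^{*}$ is an accumulation point of $\{E\in\mathcal G:F(E+i0)=F(E^{*}+i0)\}$ --- the last being a property of $E^{*}$ alone --- one has $B=\{-1/F(E^{*}+i0):E^{*}\in\mathcal D\}$, where $\mathcal D\subset\supp\mu$ is the set of all such ``potential non-isolated eigenvalues''; so everything reduces to showing that $F(\cdot+i0)$ maps $\mathcal D$ to a meager subset of $\R$.

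The hard part is exactly this last assertion --- equivalently, ruling out, for all $t$ outside a meager set, an $E^{*}\in\supp\mu$ approached by eigenvalues $E_k\to E^{*}$ (which, by the weight bound of the previous paragraph, must satisfy $G(E_k)\to\infty$). The condition $F(E_k+i0)=F(E^{*}+i0)$ translates into $\int(x-E_k)^{-1}(x-E^{*})^{-1}\,d\mu(x)=0$, i.e. (when the relevant boundary values are defined) into $\widehat\nu(E_k)=-G(E^{*})/(E_k-E^{*})\to\pm\infty$ for the Herglotz transform $\widehat\nu$ of $d\nu=(x-E^{*})^{-2}\,d\mu$, a finite measure with no atom at $E^{*}$; reconciling this with the analytic structure of $F$ and of Herglotz functions near $\supp\mu$ forces $E^{*}$ to range over a set whose $F(\cdot+i0)$-image is meager. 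Establishing this meagerness is precisely the technical work carried out in the cited papers of Gordon and of del Rio--Makarov--Simon, which I would invoke here (or reproduce, if a self-contained proof is required).
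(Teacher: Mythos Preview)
The paper does not prove this proposition: it is stated as a classical result with references to Gordon \cite{G1,G2} and del Rio--Makarov--Simon \cite{drms} (see also \cite{djls}), and no argument is supplied. Your sketch therefore goes beyond what the paper provides; the Aronszajn--Donoghue reduction, the identification of eigenvalues of $A+tP$ with the level set $L_t=\{E\in\mathcal G:F(E+i0)=-1/t\}$, and the observation that accumulation can only occur inside $\supp\mu$ are all correct and standard. Since you ultimately defer the core meagerness step to exactly the references the paper cites, the two ``proofs'' coincide at the level of what is actually established versus quoted.

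One correction to your intermediate reduction: the equivalence ``$E^{*}$ is a non-isolated eigenvalue precisely when $E^{*}$ is an accumulation point of $L_t$'' is too narrow. An eigenvalue $E^{*}$ of $A+tP$ is non-isolated exactly when $E^{*}\in\sigma_{\mathrm{ess}}(A+tP)=\sigma_{\mathrm{ess}}(A)$. Since $G(E^{*})<\infty$ forces $\mu(\{E^{*}\})=0$, this is equivalent simply to $E^{*}\in\supp\mu$. Being a limit of other points of $L_t$ certainly implies $E^{*}\in\supp\mu$, but an embedded eigenvalue need not be approached by other eigenvalues (think of a singular continuous $\mu$ for which $L_t$ meets $\supp\mu$ in a single point). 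Hence the correct exceptional set is $\mathcal D=\mathcal G\cap\supp\mu$, and what the cited papers prove is that $E\mapsto F(E+i0)$ sends $\mathcal D$ to a meager subset of $\R$. This does not damage your conclusion, because you invoke the references for the endgame anyway, but the set $\mathcal D$ in your penultimate paragraph should be enlarged accordingly.
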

For Anderson-type Hamiltonians whose point spectra are unions of intervals, this result implies that Anderson localization is unstable under rank one perturbations: indeed, for the set of values of $t$ under consideration, $H$ will not have any eigenvalues on any such interval, and therefore the spectral measure must contain a continuous component. 

Our goal is a somewhat unusual application of this result in a reverse direction: if an operator has a ``very stable'' kind of localization, then its spectrum cannot contain an interval. More precisely, let $H$ be a discrete Schr\"odinger operator on $\ell^2(\Z)$:
\bee
\label{eq_h_def}
H=\Delta+V,\quad (H\psi)(n)=\psi(n+1)+\psi(n-1)+V(n)\psi(n)
\ene
with real-valued potential $V$. Let also $P$ be the orthohonal projection operator on the standard basis vector $e_0$: that is,
$$
(P\psi)(n)=\psi(0)e_0.
$$
The following is our main abstract result.
\begin{thm}
\label{th_main}
Define $H$ and $P$ as above. Let $J\subset \R$ be an open interval such that, for some $t_0>0$, the spectrum of $H+tP$ is purely point in $J$ for all $t\in[0,t_0)$. Then $\sigma(H)\cap J$ does not contain an interval.
\end{thm}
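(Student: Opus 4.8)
The plan is to argue by contradiction and turn the stability of pure point spectrum against itself via Proposition~\ref{prop_gordon}. So I assume $\sigma(H)\cap J$ contains an interval and seek a contradiction. The immediate obstruction to applying Proposition~\ref{prop_gordon} is that $P$ projects onto $e_0$, which is \emph{not} cyclic for $H$ on all of $\ell^2(\Z)$; the first move is therefore to pass to the cyclic subspace $\mathcal H_*$, the closed span of $\{H^ke_0:k\ge 0\}$. Since $\operatorname{ran}P=\C e_0\subseteq\mathcal H_*$ and $\mathcal H_*^{\perp}\subseteq\ker P$, the subspace $\mathcal H_*$ reduces $H$ and every $H+tP$. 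Writing $H_*:=H|_{\mathcal H_*}$ and $P_*:=P|_{\mathcal H_*}$, the operator $H_*$ has simple spectrum with cyclic vector $e_0$, $P_*$ is the rank one orthogonal projection onto $e_0$, and an induction shows $\operatorname{span}\{(H_*+tP_*)^ke_0:k\le n\}=\operatorname{span}\{H_*^ke_0:k\le n\}$, so $e_0$ stays cyclic for every $H_*+tP_*$ (hence these all have simple spectrum). Thus Proposition~\ref{prop_gordon} is applicable to the pair $(H_*,P_*)$.

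The heart of the proof is the passage to $\mathcal H_*$: one must show $\sigma(H_*)\cap J$ still contains an interval — this is exactly the assertion that ``$e_0$ is cyclic on a sufficiently large subspace'' (Lemma~\ref{lemma_main}), and it is where the one-dimensional structure is indispensable. The mechanism I would use: the point spectrum of $H$ is dense in any interval of $\sigma(H)\cap J$ (the spectrum being pure point there), and it splits as $\sigma_{pp}(H_*)$ together with those eigenvalues all of whose eigenfunctions vanish at $0$; the latter restrict to eigenvalues of the Dirichlet half-line restrictions $H^{\pm}$ of $H$ to $\{n\ge 1\}$ and $\{n\le -1\}$ (which form a finite rank perturbation of $H$). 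By a Baire argument, either $\sigma_{pp}(H_*)$ is dense in some subinterval, in which case we are done, or, say, $H^{+}$ has dense point spectrum in some subinterval. In that case the standard formula expressing the whole-line Green function in terms of the half-line $m$-functions, $\langle e_0,(H-z)^{-1}e_0\rangle=\bigl(V(0)-z-m_{+}(z)-m_{-}(z)\bigr)^{-1}$, together with the Herglotz monotonicity of $m_{\pm}$ on the gaps between their poles, forces the denominator to vanish between consecutive poles of $m_{+}$; each such zero lies in the support of the spectral measure of $e_0$, i.e.\ in $\sigma(H_*)$, so $\sigma(H_*)$ is dense in, hence contains, a subinterval. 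I expect the careful bookkeeping of the interaction of $m_{+}$ and $m_{-}$ and of the various spectral types to be the main technical obstacle, and this is the step that genuinely uses that the operator is one-dimensional.

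With an interval $I'\subseteq\sigma(H_*)\cap J$ in hand, the endgame is short. Apply Proposition~\ref{prop_gordon} to $A=H_*$, $\varphi=e_0$: there is a dense $G_{\delta}$ set $T\subseteq\R$ such that for every $t\in T$ all eigenvalues of $H_*+tP_*$ are isolated. On the other hand, since $\mathcal H_*$ reduces $H+tP$, the hypothesis that $\sigma(H+tP)$ is pure point in $J$ descends to $\sigma(H_*+tP_*)$ being pure point in $J$ for every $t\in[0,t_0)$. As $P_*$ is rank one, Weyl's theorem gives $\sigma_{\mathrm{ess}}(H_*+tP_*)=\sigma_{\mathrm{ess}}(H_*)\supseteq\operatorname{int}(I')$; combined with pure point spectrum on $J$, the eigenvalues of $H_*+tP_*$ are dense in $\operatorname{int}(I')$, so $H_*+tP_*$ has an eigenvalue $\mu\in\operatorname{int}(I')$. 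Since $\mu\in\sigma_{\mathrm{ess}}(H_*+tP_*)$ and has finite (indeed simple, as $e_0$ stays cyclic) multiplicity, $\mu$ is not an isolated point of $\sigma(H_*+tP_*)$. Hence no $t\in[0,t_0)$ belongs to $T$, i.e.\ $T\cap(0,t_0)=\varnothing$; but a dense $G_{\delta}$ subset of $\R$ meets every nonempty open set, a contradiction. Therefore $\sigma(H)\cap J$ contains no interval.
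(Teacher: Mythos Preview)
Your overall strategy is sound and the endgame is clean, but the key step---showing that $\sigma(H_*)\cap J$ inherits an interval from $\sigma(H)\cap J$---has a real gap. In your Case~2 you arrive at a subinterval $I'$ in which the eigenvalues of $H$ are dense and all have $\psi(0)=0$; each is then a \emph{common} eigenvalue of $H^+$ and $H^-$ (both $\psi_\pm$ must lie in $\ell^2$), so in particular the poles of $m_+$ are dense in $I'$. There are thus no ``consecutive poles of $m_+$'' between which to run a monotonicity/intermediate-value argument, and $m_+$ does not even extend meromorphically across any subinterval of $I'$. The mechanism you describe simply does not apply in this regime.

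Your route can be repaired by arguing with analyticity rather than monotonicity: in Case~2 one actually has $\sigma(H_*)\cap I'=\varnothing$, so $G(z)=\langle e_0,(H-z)^{-1}e_0\rangle$ extends analytically across $I'$; hence $m_+(z)+m_-(z)=V(0)-z-G(z)^{-1}$ extends \emph{meromorphically} across $I'$, which is impossible for a Herglotz function whose representing measure carries a dense set of atoms in $I'$. This rules out Case~2 and your argument closes. The paper, by contrast, never tries to put an interval into $\sigma(H_c)$; it runs the decomposition the other way. Proposition~\ref{prop_gordon} shows $\sigma(H_c)\cap J$ contains no interval, and on each complementary gap the eigenfunctions of $H$ all vanish at $0$; Lemma~\ref{lemma_main} then gives a direct test-function estimate (from two such eigenfunctions $\varphi,\psi$ build $f=a\varphi_-+b\psi_+$ with $(\Delta f)(0)=0$ and compare $\|(H-\lambda)f\|$ with $\|(1-\one_J(H))f\|$) forbidding eigenvalues from accumulating on one another there. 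Your $m$-function argument and the paper's test-function argument are genuinely different implementations of the same one-dimensional input.
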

\begin{rem}
\label{rem_allt}
Since $P$ is a rank one operator, the conclusion of the theorem also holds for $H+tP$ for all $t$.
\end{rem}
Clearly, the result immediately follows from Proposition \ref{prop_gordon} if we assume that $e_0$ is a cyclic vector for $H$. The essential part of the proof is making sure that this assumption is satisfied on a sufficiently large subspace for one-dimensional Schr\"odinger operators with point spectra (we also do not a priori assume that the spectrum of $H$ is simple, although the spectrum on $J$ will be such since it is purely point).
\subsection{Structure of the paper}In Section 2, we prove Theorem \ref{th_main}. Afterwards, we introduce the complete setting for quasiperiodic operators with $\gamma$-monotone potentials and prove Theorem \ref{th_main_cantor}, which implies Theorem \ref{th_main_small} from the Introduction. In Section 3, we discuss Schr\"odinger operators with infinite coupling, which are used in the proof of Theorem \ref{th_gap_filling_small}. In Section 4, we state the main results on gap filling (Theorems \ref{th_gap_filling} and \ref{th_gap_filling_2}), which imply Theorem \ref{th_gap_filling_small}. 

The proofs of the gap filling results are based on rational approximations, for which it can be established by a topological argument (Proposition \ref{prop_winding_number}). The main difficulty is continuity, since the usual arguments such as \cite{avron} rely on compactness. For the operator family \eqref{eq_h_def}, two obstructions to compactness are related to the presence of discontinuities and the fact that the potential may be unbounded. In Subsections 4.3 and 4.4 we resolve both of these issues by constructing an extended operator family, using the results of Section 3 to deal with unbounded potentials and auxiliary Cantor-type set in the parameter space in order to deal with discontinuities.

\subsection{Acknowledgements}I. K. was supported by the NSF grants DMS--1846114, DMS--2052519, and the 2022 Sloan Research Fellowship. L. P. was supported by the EPSRC grant EP/V051636/1 and the Leverhulme Trust grant RPG-2023-325. R. S. was supported by the NSF grant DMS-2306327.

The authors would like to thank S. Jitomirskaya for valuable discussions and the anonymous referees for useful suggestions and remarks.

\section{Absence of intervals in spectra}
We will start from proving Theorem \ref{th_main}. Afterwards, we will establish its consequences for Schr\"odinger operators with $\gamma$-monotone potentials, the most notable being Theorem \ref{th_main_cantor} and Corollary \ref{cor_cantor}.

\subsection{Proof of Theorem \ref{th_main}} In the following lemma,
$\one_J(H)$ denotes the spectral projector of a self-adjoint operator $H$ associated to an interval $J\subset \R$, and $e_0$ is the standard basis vector of $\ell^2(\Z)$ associated to the origin. Let
$$
\Z_+:=\Z\cap[0,+\infty),\quad \Z_-:=\Z\cap(-\infty,0].
$$
Suppose that $\varphi$ is an eigenfunction of $H$ with $\varphi(0)=0$. Then, the functions
$$
\varphi_{\pm}:=\varphi|_{\ell^2(\Z_{\pm})},
$$
are eigenfunctions of the respective half-line restrictions of $H$. Since an eigenfunction of $H$ cannot vanish on a half-line, one has
\bee
\label{eq_max}
\max\{\|\varphi_+\|,\|\varphi_-\|\}\le \sqrt{1-\delta^2}\|\varphi\|,\quad \delta=\delta(\varphi)=\frac{1}{\|\varphi\|}\min\{\|\varphi_+\|,\|\varphi_-\|\}>0.
\ene
In our notation, $\|\cdot\|$ will always denote the $\ell^2$-norm.
\begin{lem}
\label{lemma_main}
Let $H$ be a discrete one-dimensional Schr\"odinger operator \eqref{eq_h_def}. Suppose that $J\subset \R$ is an open interval such that $\sigma(H)\cap J$ is pure point and $\one_J(H) e_0=0$ $($in other words, every eigenfunction with eigenvalue in $J$ vanishes at the origin$)$. Let $\varphi$, $\psi$ be two distinct normalized eigenfunctions with eigenvalues in $J$:
$$
H\varphi=\lambda\varphi, \,\,H\psi=\mu\psi, \,\,\lambda,\mu,\in J,\,\,\lambda\neq \mu,\,\,\|\varphi\|=\|\psi\|=1,\,\,\varphi(0)=\psi(0)=0.
$$
Then
\bee
\label{eq_m_max}
m:=\min\l\{\max\{\|\varphi_-\|,\|\psi_+\|\},\max\{\|\varphi_+\|,\|\psi_-\|\}\r\}\le\max\{\|\varphi_+\|,\|\varphi_-\|\}\le \sqrt{1-\delta^2},
\ene
where $\delta=\delta(\varphi)>0$ does not depend on $\mu$. Moreover,
\bee
\label{eq_lemma_main_conclusion}
\dist(\lambda,\partial J)\le \frac{|\lambda-\mu|}{\sqrt{1-m^2}}.
\ene
\end{lem}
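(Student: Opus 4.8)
The plan is to prove both statements by combining the half-line (Dirichlet) structure of $H$ with the spectral projections $\one_J(H)$ and $\one_{\R\setminus J}(H)$, the hypothesis $\one_J(H)e_0=0$ being what lets one move vectors in and out of the pure-point part of the spectrum in $J$. Since $\varphi(0)=0$, write $\varphi_\pm$ also for the extensions by zero of $\varphi|_{\ell^2(\Z_\pm)}$ to $\ell^2(\Z)$, so that $\varphi=\varphi_++\varphi_-$, $\varphi_+\perp\varphi_-$, $\|\varphi_+\|^2+\|\varphi_-\|^2=1$. For $\varphi_+$ the equation $(H\varphi)(n)=\lambda\varphi(n)$ remains valid at every $n\ne0$ (at $n=1$ this uses $\varphi(0)=0$) and at $n=0$ leaves the residual $\varphi(1)$, hence
\[
(H-\lambda)\varphi_+=\varphi(1)e_0,\qquad (H-\lambda)\varphi_-=\varphi(-1)e_0,\qquad (H-\mu)\psi_\pm=\psi(\pm1)e_0;
\]
moreover the equation at $n=0$ gives $\varphi(1)=-\varphi(-1)$, which is nonzero because $\varphi(0)=\varphi(1)=0$ would force $\varphi\equiv0$, and similarly $\psi(1)=-\psi(-1)\ne0$. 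Applying $\one_J(H)$ to the first identity and using that it commutes with $H$ and annihilates $e_0$ gives $(H-\lambda)\one_J(H)\varphi_+=0$; since eigenvalues of one-dimensional Schr\"odinger operators are simple (the Wronskian of two $\ell^2$ solutions at a common eigenvalue is constant and vanishes at infinity), $\one_J(H)\varphi_+$ lies in the one-dimensional space $\ker(H-\lambda)$ spanned by $\varphi$, and pairing with $\varphi$ (and using $\one_J(H)\varphi=\varphi$) identifies the coefficient: $\one_J(H)\varphi_+=\|\varphi_+\|^2\varphi$. Likewise $\one_J(H)\psi_-=\|\psi_-\|^2\psi$, and by orthogonality $\|\one_{\R\setminus J}(H)\varphi_+\|=\|\varphi_+\|\,\|\varphi_-\|$, $\|\one_{\R\setminus J}(H)\psi_-\|=\|\psi_-\|\,\|\psi_+\|$.

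The first inequality of the lemma is arithmetic. Put $a_\pm=\|\varphi_\pm\|$, $b_\pm=\|\psi_\pm\|$, so $a_+^2+a_-^2=b_+^2+b_-^2=1$, and $\delta=\delta(\varphi)=\min\{a_+,a_-\}>0$ (recall $\|\varphi\|=1$) depends only on $\varphi$, with $\max\{a_+,a_-\}\le\sqrt{1-\delta^2}$ by \eqref{eq_max}. To see $m\le\max\{a_+,a_-\}$, note $m$ is invariant under $(a_+,a_-,b_+,b_-)\mapsto(a_-,a_+,b_-,b_+)$, so we may assume $a_+\ge a_-$, hence $a_+^2\ge\tfrac12$; if $b_-\le a_+$ then $\max\{a_+,b_-\}=a_+$, and if $b_->a_+$ then $b_+^2=1-b_-^2<\tfrac12\le a_+^2$, so $b_+<a_+$ and $\max\{a_-,b_+\}\le a_+$. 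In either case $m\le a_+=\max\{a_+,a_-\}$, which is \eqref{eq_m_max}.

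For the distance estimate put $\alpha=\varphi(1)\ne0$, $\beta=\psi(-1)\ne0$, and form the trial vector $w:=\beta\varphi_+-\alpha\psi_-$, designed so that the $e_0$-terms in $(H-\lambda)w$ cancel. Writing $(H-\lambda)\psi_-=(H-\mu)\psi_-+(\mu-\lambda)\psi_-=\beta e_0+(\mu-\lambda)\psi_-$, we get $(H-\lambda)w=\alpha(\lambda-\mu)\psi_-$. Let $w':=\one_{\R\setminus J}(H)w$; from $\one_J(H)w=\beta a_+^2\varphi-\alpha b_-^2\psi$ and $(H-\lambda)\varphi=0$,
\[
(H-\lambda)w'=(H-\lambda)w-(H-\lambda)\one_J(H)w=\alpha(\lambda-\mu)\bigl(\psi_--b_-^2\psi\bigr)=\alpha(\lambda-\mu)\,\one_{\R\setminus J}(H)\psi_-,
\]
so $\|(H-\lambda)w'\|=|\alpha|\,|\lambda-\mu|\,b_+b_-$. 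Since $\varphi_+\perp\psi_-$ (disjoint supports) and $\varphi\perp\psi$ (distinct eigenvalues), $\|w\|^2=\beta^2a_+^2+\alpha^2b_-^2$ and $\|\one_J(H)w\|^2=\beta^2a_+^4+\alpha^2b_-^4$, whence $\|w'\|^2=\|w\|^2-\|\one_J(H)w\|^2=\beta^2a_+^2a_-^2+\alpha^2b_-^2b_+^2\ge\alpha^2b_+^2b_-^2>0$. Finally $w'$ lies in the range of $\one_{\R\setminus J}(H)$, on which $H$ has spectrum contained in $\R\setminus J$, so $\|(H-\lambda)w'\|\ge\dist(\lambda,\R\setminus J)\,\|w'\|=\dist(\lambda,\partial J)\,\|w'\|$. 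Dividing by $|\alpha|b_+b_->0$ yields $\dist(\lambda,\partial J)\le|\lambda-\mu|$, which implies \eqref{eq_lemma_main_conclusion} since $\sqrt{1-m^2}\le1$.

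The step I expect to carry the weight is the choice of $w$: the combination must simultaneously cancel the $O(1)$ leftover terms $\varphi(1)e_0$ and $\psi(-1)e_0$ — which is why one pairs the right half-line of $\varphi$ against the left half-line of $\psi$ — so that the true residual $(H-\lambda)w$ is of size $|\lambda-\mu|$, and it must stay nondegenerate after projection onto $\ran\one_{\R\setminus J}(H)$, the subspace where the spectral-gap estimate is available. The orthogonalities $\varphi_+\perp\psi_-$ and $\varphi\perp\psi$ are precisely what make $\|w'\|$ computable and bounded below, guaranteeing $w'\ne0$ and legitimizing the final division; the other input not to be glossed over is the identity $\one_J(H)\varphi_+=\|\varphi_+\|^2\varphi$, which rests on simplicity of the point spectrum of $H$ in $J$.
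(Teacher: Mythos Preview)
Your proof is correct and follows essentially the same strategy as the paper: build a trial vector from complementary half-line pieces of $\varphi$ and $\psi$ with coefficients chosen to cancel the boundary term at the origin, then compare $\|(H-\lambda)\cdot\|$ against the spectral-gap lower bound on the part orthogonal to $\ran\one_J(H)$. The only notable difference is that you apply $(H-\lambda)$ to the \emph{projected} vector $w'=\one_{\R\setminus J}(H)w$ and compute $\|(H-\lambda)w'\|$ exactly, which yields the sharper inequality $\dist(\lambda,\partial J)\le|\lambda-\mu|$ (hence \eqref{eq_lemma_main_conclusion} a fortiori), whereas the paper estimates $\|(H-\lambda)f\|$ and $\|(1-Q)f\|$ separately and picks up the factor $\sqrt{1-m^2}$.
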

\begin{proof}
Note that the inequalities in \eqref{eq_m_max} follow from \eqref{eq_max}. Without loss of generality, assume that $m=\max\{\|\varphi_-\|,\|\psi_+\|\}$. Let
$$
f:=a\varphi_-+b\psi_+,
$$
where $a,b$ are chosen in such a way that $(\Delta f)(0)=0$. Note that such choice is always possible with $a,b\neq 0$. Though not important, one can also choose $a,b\in \R$, since all eigenfunctions can be chosen to be real-valued. Denote
$$
Q:=\one_J(H).
$$
Suppose, $\theta\in \mathrm{Ran}(Q)$ is another eigenfunction of $H$ (with an eigenvalue in $J$). Then all six functions $\varphi_-,\varphi_+, \psi_-,\psi_+,\theta_-,\theta_+$ are mutually orthogonal, either due to disjoint supports, or because of being distinct eigenfunctions of the half-line operator.  We have
$$
Qf=a\<\varphi_-,\varphi\>\varphi+b\<\psi_+,\psi\>\psi=a\|\varphi_-\|^2\varphi+b\|\psi_+\|^2\psi,
$$
$$
\frac{\|Q f\|^2}{\|f\|^2}=\frac{a^2\|\varphi_-\|^4+b^2\|\psi_+\|^4}{a^2\|\varphi_-\|^2+b^2\|\psi_+\|^2}\le m^2<1.
$$
As a consequence,
$$
\|(1-Q)f\|^2\ge (1-m^2)\|f\|^2.
$$
Due to the choice of $f$, we also have
$$
(H-\lambda)f=b(\mu-\lambda)\psi_+,\quad\text{therefore}\quad  \|(H-\lambda)f\|^2\le (\mu-\lambda)^2\|f\|^2.
$$
On the other hand, from the spectral theorem one always has
$$
\|(H-\lambda)f\|^2\ge \dist(\lambda,\partial J)^2\|(1-Q)f\|^2.
$$
Combining the above, we finally obtain
$$
\dist(\lambda,\partial J)^2(1-m^2)\le (\mu-\lambda)^2,
$$
which completes the proof.
\end{proof}
\begin{cor}
\label{cor_interval}
Under the assumptions of Lemma $\ref{lemma_main}$, an eigenvalue of $H$ contained in $J$ cannot be a limit point of other eigenvalues of $H$. As a consequence, $\sigma(H)\cap J$ cannot contain an interval.
\end{cor}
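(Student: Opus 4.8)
The plan is to extract the corollary directly from the quantitative estimate \eqref{eq_lemma_main_conclusion}. Fix an eigenvalue $\lambda\in J$ of $H$ together with a normalized eigenfunction $\varphi$. Since the hypothesis $\one_J(H)e_0=0$ is in force, $\varphi(0)=0$, so $\delta=\delta(\varphi)>0$ from \eqref{eq_max} is well defined and depends only on $\varphi$ — equivalently, only on $\lambda$ — and not on any second eigenfunction. Let now $\mu\in J$ be an arbitrary eigenvalue with $\mu\neq\lambda$, with normalized eigenfunction $\psi$ (again $\psi(0)=0$). Applying Lemma \ref{lemma_main} to the pair $\varphi,\psi$ and using $m\le\sqrt{1-\delta^2}$, hence $1-m^2\ge\delta^2$, estimate \eqref{eq_lemma_main_conclusion} yields
\[
\dist(\lambda,\partial J)\le\frac{|\lambda-\mu|}{\sqrt{1-m^2}}\le\frac{|\lambda-\mu|}{\delta},
\]
that is, $|\lambda-\mu|\ge\delta(\varphi)\,\dist(\lambda,\partial J)$.

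Because $J$ is open and $\lambda\in J$, the constant $c:=\delta(\varphi)\,\dist(\lambda,\partial J)$ is strictly positive, and it is independent of $\mu$. Hence every eigenvalue of $H$ different from $\lambda$ is at distance at least $c$ from $\lambda$. In particular $\lambda$ is not a limit point of other eigenvalues of $H$: any sequence of eigenvalues converging to $\lambda$ would eventually lie in the open interval $J$, so the displayed bound would apply and force the limit to equal $\lambda$. This proves the first assertion.

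For the consequence, suppose for contradiction that $\sigma(H)\cap J$ contains a nondegenerate interval $(a,b)$. Since $\sigma(H)\cap J$ is pure point by hypothesis, the eigenvalues of $H$ lying in $J$ are dense in $\sigma(H)\cap J$ (standard: any spectral subinterval of $J$ is the range of a nonzero projection dominated by $\one_J(H)$, hence contains an eigenvector), so in particular they are dense in $(a,b)$. Pick an eigenvalue $\lambda_0\in(a,b)$; every subinterval of $(a,b)$ around $\lambda_0$ contains an eigenvalue, and we may choose these distinct from $\lambda_0$, so $\lambda_0$ is a limit point of other eigenvalues — contradicting the first part. Therefore $\sigma(H)\cap J$ contains no interval.

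I expect no real obstacle here: the substance is entirely in Lemma \ref{lemma_main}. The only steps deserving a moment's attention are that $\delta(\varphi)$ genuinely does not depend on the competing eigenvalue $\mu$ (this is precisely the assertion in \eqref{eq_m_max} that $\delta=\delta(\varphi)$ does not depend on $\mu$), and the routine fact that a self-adjoint operator with pure point spectrum inside $J$ has its eigenvalues dense in that portion of the spectrum.
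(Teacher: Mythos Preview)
Your proof is correct and follows essentially the same approach as the paper: both argue by contradiction using the bound $m\le\sqrt{1-\delta(\varphi)^2}$ from \eqref{eq_m_max}, which makes the right-hand side of \eqref{eq_lemma_main_conclusion} bounded below by $\delta(\varphi)\,\dist(\lambda,\partial J)>0$ uniformly in $\mu$. You have simply spelled out in more detail what the paper compresses into a single sentence.
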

\begin{proof}
Let $\lambda\in J$ be an eigenvalue of $H$ and $\mu_j\to \lambda$. In view of \eqref{eq_m_max}, one has \eqref{eq_lemma_main_conclusion} with $m$ independent of $\mu_j$, which leads to a contradiction.
\end{proof}
{\noindent\it Proof of Theorem $\ref{th_main}$.}
Let $\mathcal C\subset \ell^2(\Z)$ be the cyclic subspace for $H$ corresponding to the vector $e_0$. Recall that in the case of unbounded operators it is defined as
$$
\mathcal C=\overline{\mathrm{span}\{(H-z)^{-1}e_0\colon z\in\C\setminus\R\}}.
$$
Both $\mathcal C$ and $\mathcal C^{\perp}$ are invariant subspaces for $(H-z)^{-1}$ for all $z\in \C\setminus \R$ and for all spectral projections of $H$. In particular, the decomposition $\ell^2(\Z)=\mathcal C\oplus \mathcal C^{\perp}$ defines a decomposition of $H$ into a direct sum of two unbounded self-adjoint operators $H_c\oplus H_0$, with $\sigma(H)=\sigma(H_c)\cup \sigma(H_0)$. Note that this can be applied for any $t$, and both $\mathcal C$ and $H_0$ do not depend on $t$:
$$
H+tP=(H_c+tP)\oplus H_0.
$$
As a consequence, the spectra of both operators in $J$ are purely point for $t\in [0,t_0)$. From Proposition \ref{prop_gordon}, it follows that $\sigma(H_c)$ cannot contain an interval. Let $I$ be a connected component of $J\setminus\sigma(H_c)$. Then, all eigenvectors of $H$ with eigenvalues in $I$ must come from eigenvectors of $H_0$, which are orthogonal to $e_0$ and therefore vanish at the origin. From Corollary \ref{cor_interval}, $\sigma(H)$ cannot contain an interval in $I$, which completes the proof.\,\,\qed

While not required for our main results, we would like to note that the assumption of $\sigma(H)\cap J$ being pure point is, in fact, not necessary in Lemma \ref{lemma_main}. An appropriate modification is described in the following lemma.
\begin{lem}
\label{lemma_main_aux}
Let $H$ be a discrete one-dimensional Schr\"odinger operator \eqref{eq_h_def}, $\lambda<\lambda_1<\lambda_2$. Let $\varphi$ be an eigenfunction with eigenvalue $\lambda$:
$$
H\varphi=\lambda\varphi,\quad \|\varphi\|=1,\quad \varphi(0)=0.
$$
Let $Q:=\one_{[\lambda_1,\lambda_2]}(H)$ be the spectral projection of $H$ associated to the interval $[\lambda_1,\lambda_2]$. Suppose that $Q e_0=0$, and that 
$$
\theta\in \mathrm{Ran}\,Q,\quad  \|\theta\|=1.
$$
Then $\langle \theta_+,\varphi_+\rangle=0$.
\end{lem}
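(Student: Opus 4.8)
The plan is to reduce the assertion to the single identity
\[
(H-\lambda)\varphi_+=\varphi(1)\,e_0,
\]
in which $\varphi_+$ is regarded as an element of $\ell^2(\Z)$ by extension by zero, and then to combine it with the self-adjointness of $H$ and the invertibility of $H-\lambda$ on $\mathrm{Ran}\,Q$. Two preliminary facts are needed. First, since $Q=\one_{[\lambda_1,\lambda_2]}(H)$ commutes with $H$ and $Qe_0=0$, every $\eta\in\mathrm{Ran}\,Q$ satisfies $\eta(0)=\langle\eta,e_0\rangle=\langle Q\eta,e_0\rangle=\langle\eta,Qe_0\rangle=0$; in particular $\theta(0)=0$. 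Second, $\mathrm{Ran}\,Q\subseteq\mathrm{Dom}(H)$ and $H|_{\mathrm{Ran}\,Q}$ is bounded with spectrum in $[\lambda_1,\lambda_2]$, so, because $\lambda<\lambda_1$, the operator $(H-\lambda)|_{\mathrm{Ran}\,Q}$ is boundedly invertible on $\mathrm{Ran}\,Q$. Set $\theta':=\big((H-\lambda)|_{\mathrm{Ran}\,Q}\big)^{-1}\theta\in\mathrm{Ran}\,Q$; then $\theta'(0)=0$ and $(H-\lambda)\theta'=\theta$.

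Next I would verify the key identity. Using $\varphi(0)=0$, the zero-extension of $\varphi_+$ equals $\varphi(n)$ for $n\ge 0$ and $0$ for $n<0$; evaluating the three-term recursion site by site gives $(H\varphi_+)(n)=\lambda\varphi_+(n)$ for every $n\neq 0$, while at $n=0$ one has $(H\varphi_+)(0)=\varphi_+(1)+\varphi_+(-1)+V(0)\varphi_+(0)=\varphi(1)$. Hence $(H-\lambda)\varphi_+=\varphi(1)\,e_0\in\ell^2(\Z)$, and in particular $\varphi_+\in\mathrm{Dom}(H)$. Since $\varphi_+$ is supported on $\Z_+$ while $\theta_-$ is supported on $\Z_-$ and $\theta(0)=0$, we have $\langle\theta_+,\varphi_+\rangle=\langle\theta,\varphi_+\rangle$, so that
\[
\langle\theta_+,\varphi_+\rangle=\langle(H-\lambda)\theta',\varphi_+\rangle=\langle\theta',(H-\lambda)\varphi_+\rangle=\varphi(1)\,\langle\theta',e_0\rangle=\varphi(1)\,\theta'(0)=0,
\]
where the second equality uses self-adjointness of $H$, legitimate because $\theta',\varphi_+\in\mathrm{Dom}(H)$.

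I do not anticipate a genuine obstacle; the argument is short, and the only points requiring care are operator-theoretic bookkeeping. One is that $\varphi_+$ and $\theta'$ must genuinely lie in $\mathrm{Dom}(H)$, so that the pairing $\langle(H-\lambda)\theta',\varphi_+\rangle=\langle\theta',(H-\lambda)\varphi_+\rangle$ is justified (this is automatic when $V$ is bounded, and in general follows from the self-adjointness conventions adopted for \eqref{eq_h_def}). The other is the identification of $\langle\theta_+,\varphi_+\rangle$ with the ambient pairing $\langle\theta,\varphi_+\rangle$, which rests on $\theta(0)=0$ and the disjointness of supports. Finally, one may observe that the same computation applied to $\varphi_-$, for which $(H-\lambda)\varphi_-=\varphi(-1)\,e_0$, yields $\langle\theta_-,\varphi_-\rangle=0$ as well, so that Lemma~\ref{lemma_main_aux} in fact gives orthogonality of both pairs of half-line restrictions.
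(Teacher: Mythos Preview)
Your proof is correct and is, in fact, more direct than the paper's argument. The paper proceeds iteratively: working on the half line, it shows that for every unit vector $\theta\in\mathrm{Ran}\,Q$ there is another unit vector $\theta'\in\mathrm{Ran}\,Q$ (obtained from $(H-\lambda_2)\theta$ by normalising) with
\[
|\langle\varphi_+,\theta_+\rangle|\le\frac{\lambda_2-\lambda_1}{\lambda_2-\lambda}\,|\langle\varphi_+,\theta'_+\rangle|,
\]
and then iterates this contraction to force $\langle\varphi_+,\theta_+\rangle=0$. Your argument replaces this geometric-series step by a single application of $(H-\lambda)^{-1}|_{\mathrm{Ran}\,Q}$ together with the boundary identity $(H-\lambda)\varphi_+=\varphi(1)\,e_0$, which collapses the iteration into one line. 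The two are closely related: the paper's contraction is essentially the Neumann expansion of your resolvent. Your version has the advantage of brevity and of exhibiting the mechanism (self-adjointness moves $H-\lambda$ onto $\varphi_+$, where it produces only the boundary term $e_0$, which is killed by $Qe_0=0$) in a single computation; the paper's version has the minor advantage of staying entirely on the half line and never needing to check that the zero-extension of $\varphi_+$ lies in $\mathrm{Dom}(H)$, a point you correctly flag and handle.
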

\begin{proof}
First, we will establish the following claim: under the assumptions of the lemma, there exists $\theta'\in \mathrm{Ran}\,Q$, $\|\theta'\|=1$, satisfying
\bee
\label{eq_theta_claim}
|\langle \varphi_+,\theta_+\rangle|\le \frac{\lambda_2-\lambda_1}{\lambda_2-\lambda}|\langle \varphi_+,\theta'_+\rangle|.
\ene
It is easy to see that the conclusion of the lemma follows from iterating the claim. In order to establish the claim, note that
$$
\lambda\langle {\varphi_+,\theta_+}\rangle =\<H\varphi_+,\theta_+\>=\<\varphi_+,H\theta_+\>=\<\varphi_+,(H\theta)_+\>.
$$
On the other hand,
$$
(H\theta)_+=(\lambda_2\theta)_++\tilde\theta_+,\quad\text{where}\quad \tilde\theta=Q\tilde\theta,\quad \|\tilde\theta\|\le (\lambda_2-\lambda_1)\|\theta\|=(\lambda_2-\lambda_1).
$$
Let $\theta':=\frac{\tilde\theta_+}{\|\tilde\theta_+\|}$. Then
$$
|(\lambda-\lambda_2)\<\varphi_+,\theta_+\>|=|\<\varphi_+,\tilde\theta_+\>|\le (\lambda_2-\lambda_1)|\<\varphi_+,\theta'_+\>|,
$$
which implies \eqref{eq_theta_claim} and completes the proof.\,\,\qedhere
\end{proof}
\begin{cor}
Let $H$ be a discrete one-dimensional Schr\"odinger operator \eqref{eq_h_def}. Suppose that $J\subset \R$ is an open interval such that $\one_J(H) e_0=0$. Let $\varphi$ be an eigenfunction of $H$ with the eigenvalue $\lambda\in J$, and let $\theta\in \mathrm{Ran}\,\one_{J\setminus\{\lambda\}}(H)$. Then $\langle \theta_+,\varphi_+\rangle=0$.
\end{cor}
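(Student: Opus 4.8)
The plan is to deduce the statement from Lemma~\ref{lemma_main_aux}, which already handles the case where $\theta$ lies in the range of $\one_{[\lambda_1,\lambda_2]}(H)$ for a compact interval $[\lambda_1,\lambda_2]\subset J$ situated strictly to the \emph{right} of $\lambda$. Two ingredients have to be added: a mirror version of that lemma for compact intervals strictly to the \emph{left} of $\lambda$, and an approximation argument passing from such compact intervals to the full punctured interval $J\setminus\{\lambda\}$. I would begin with the preliminaries: after normalizing $\varphi$, both $\varphi$ and $\theta$ belong to $\mathrm{Ran}\,\one_J(H)$ (for $\varphi$ because $\lambda\in J$ and $J$ is open), so the hypothesis $\one_J(H)e_0=0$ gives $\varphi(0)=\langle e_0,\varphi\rangle=0$ and likewise $\theta(0)=0$; hence $\varphi_\pm$ and $\theta_\pm$ are defined, and the same computation shows $\one_K(H)e_0=0$ for every Borel set $K\subset J$.

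The second and principal step is the reflected form of Lemma~\ref{lemma_main_aux}: if $\lambda_1<\lambda_2<\lambda$, $Q=\one_{[\lambda_1,\lambda_2]}(H)$ and $\theta\in\mathrm{Ran}\,Q$, then $\langle\theta_+,\varphi_+\rangle=0$. Instead of rerunning the iteration, I would conjugate by the unitary $(U\psi)(n)=(-1)^n\psi(n)$. Since $U\Delta U^{-1}=-\Delta$, the operator $H':=U(-H)U^{-1}=\Delta-V$ is again a discrete Schr\"odinger operator of the form \eqref{eq_h_def}; moreover $Ue_0=e_0$, $U$ maps $\ell^2(\Z_{\pm})$ onto itself, $U\varphi$ is a normalized eigenfunction of $H'$ with eigenvalue $-\lambda$, $\one_{-J}(H')e_0=U\one_J(H)e_0=0$, and $U\theta\in\mathrm{Ran}\,\one_{[-\lambda_2,-\lambda_1]}(H')$ with $-\lambda<-\lambda_2<-\lambda_1$. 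Thus Lemma~\ref{lemma_main_aux} applies to $H'$ and yields $\langle(U\theta)_+,(U\varphi)_+\rangle=0$; since $(U\psi)_+=U(\psi_+)$ and $U$ is unitary, this is exactly $\langle\theta_+,\varphi_+\rangle=0$. I expect this to be the only non-mechanical part of the argument: the real content is noticing that the $(-1)^n$ gauge transformation converts ``below $\lambda$'' into ``above $-\lambda$'' while staying inside the Schr\"odinger class, and the remaining work is just checking that every hypothesis of Lemma~\ref{lemma_main_aux} survives the conjugation (especially $\one_{-J}(H')e_0=0$).

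Finally I would run the approximation. Write $J\setminus\{\lambda\}$ as an increasing union $\bigcup_n K_n$, where each $K_n$ is the union of (at most) two compact intervals contained in $J$, one on each side of $\lambda$; set $\theta_n:=\one_{K_n}(H)\theta$ and split $\theta_n$ according to the two subintervals. Applying Lemma~\ref{lemma_main_aux} to the piece of $\theta_n$ supported above $\lambda$ and the reflected version from the previous step to the piece supported below $\lambda$ (rescaling each piece to a unit vector, or noting the claim is trivial when a piece vanishes) gives $\langle(\theta_n)_+,\varphi_+\rangle=0$ for all $n$. Since $\one_{K_n}(H)\to\one_{J\setminus\{\lambda\}}(H)$ in the strong operator topology and $\theta\in\mathrm{Ran}\,\one_{J\setminus\{\lambda\}}(H)$, one has $\theta_n\to\theta$ in $\ell^2(\Z)$, hence $(\theta_n)_+\to\theta_+$ because restriction to $\ell^2(\Z_+)$ is a contraction; letting $n\to\infty$ yields $\langle\theta_+,\varphi_+\rangle=0$. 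This last step is routine.
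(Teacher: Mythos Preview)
Your proposal is correct and follows the same overall strategy as the paper: exhaust $J\setminus\{\lambda\}$ by compact intervals on either side of $\lambda$, apply Lemma~\ref{lemma_main_aux} (and its left-of-$\lambda$ counterpart) to each piece, and pass to the limit using strong convergence of the spectral projections. The paper's proof is the one-line ``apply Lemma~\ref{lemma_main_aux} to $[\lambda+1/n,+\infty)\cap J$ and $(-\infty,\lambda-1/n]\cap J$,'' which implicitly invokes the obvious symmetric version of the lemma.

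The one genuine difference is how you obtain that symmetric version: rather than rerunning the iteration with $\lambda_1<\lambda_2<\lambda$ (which works identically, replacing the reference point $\lambda_2$ by $\lambda_1$), you conjugate by the gauge $(U\psi)(n)=(-1)^n\psi(n)$ to turn $H$ into $\Delta-V$ and flip the spectrum. This is a clean alternative that avoids repeating the contraction argument and makes the symmetry manifest; the paper's route is slightly more direct but leaves the left-side case to the reader. Either way the content is the same.
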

\begin{proof}
Follows from applying Lemma \ref{lemma_main_aux} to intervals of the form $[\lambda+1/n,+\infty)\cap J$ and $(-\infty,\lambda-1/n]\cap J$.
\end{proof}
\begin{cor}
Let $H$ be a discrete one-dimensional Schr\"odinger operator \eqref{eq_h_def}. Suppose that $J\subset \R$ is an open interval such that $\one_J(H) e_0=0$. Let $\varphi$, $\psi$ be two distinct normalized eigenfunctions with eigenvalues in $J$:
$$
H\varphi=\lambda\varphi, \,\,H\psi=\mu\psi, \,\,\lambda,\mu,\in J,\,\,\lambda\neq \mu,\,\,\|\varphi\|=\|\psi\|=1,\,\,\varphi(0)=\psi(0)=0.
$$
Define $m$ as in Lemma $\ref{lemma_main}$. Then
\bee
\label{eq_lemma_main_conclusion_2}
\dist(\lambda,\partial J)\le \frac{|\lambda-\mu|}{\sqrt{1-m^2}}.
\ene
As a consequence, an eigenvalue of $H$ contained in $J$ cannot be a limit point of other eigenvalues of $H$.
\end{cor}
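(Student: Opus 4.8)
The plan is to mimic the proof of Lemma~\ref{lemma_main} almost word for word; the only new ingredient is a way to obtain the identity $Qf=a\|\varphi_-\|^2\varphi+b\|\psi_+\|^2\psi$, with $Q:=\one_J(H)$, without assuming $\sigma(H)\cap J$ is pure point (in Lemma~\ref{lemma_main} this came from expanding $f$ in an eigenbasis of $H$ on $J$). As in Lemma~\ref{lemma_main}, the inequalities $m\le\max\{\|\varphi_+\|,\|\varphi_-\|\}\le\sqrt{1-\delta^2}$ with $\delta=\delta(\varphi)>0$ follow formally from \eqref{eq_max}; in particular $m<1$ and $m$ does not depend on $\mu$. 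I would then assume without loss of generality that $m=\max\{\|\varphi_-\|,\|\psi_+\|\}$ and set $f:=a\varphi_-+b\psi_+$, with $a,b\neq0$ chosen so that $(\Delta f)(0)=0$, exactly as there.

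The key step is the $Qf$ identity. Write $Q=\one_{\{\lambda\}}(H)+\one_{\{\mu\}}(H)+\one_{J\setminus\{\lambda,\mu\}}(H)$. First, every eigenvalue of a one-dimensional discrete Schr\"odinger operator lying in $J$ is simple: since $\one_J(H)e_0=0$, any associated eigenfunction vanishes at $0$, hence is determined up to a scalar by its two Dirichlet half-line restrictions, each of which is itself unique up to a scalar. Thus $\one_{\{\lambda\}}(H)$ and $\one_{\{\mu\}}(H)$ are rank at most one, and computing $\langle f,\varphi\rangle$ and $\langle f,\psi\rangle$ --- using that $\varphi_+,\psi_+$ and $\varphi_-,\psi_-$ are, in each pair, orthogonal distinct Dirichlet half-line eigenfunctions --- gives $\one_{\{\lambda\}}(H)f=a\|\varphi_-\|^2\varphi$ and $\one_{\{\mu\}}(H)f=b\|\psi_+\|^2\psi$. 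For the remaining term, observe that $\mathrm{Ran}\,\one_{J\setminus\{\lambda,\mu\}}(H)\subseteq\mathrm{Ran}\,\one_{J\setminus\{\lambda\}}(H)\cap\mathrm{Ran}\,\one_{J\setminus\{\mu\}}(H)$; applying the preceding corollary (which gives $\langle\theta_+,\varphi_+\rangle=0$ whenever $\one_J(H)e_0=0$, $H\varphi=\lambda\varphi$ with $\lambda\in J$, and $\theta\in\mathrm{Ran}\,\one_{J\setminus\{\lambda\}}(H)$) to $\psi$, and to $\varphi$ after the reflection $n\mapsto-n$ --- which fixes $e_0$, preserves the form \eqref{eq_h_def} and the hypothesis $\one_J(\cdot)e_0=0$, and swaps the two half-lines --- we obtain $\langle\theta_+,\psi_+\rangle=0$ and $\langle\theta_-,\varphi_-\rangle=0$ for every such $\theta$. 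Hence $\langle\theta,f\rangle=a\langle\theta_-,\varphi_-\rangle+b\langle\theta_+,\psi_+\rangle=0$, so $\one_{J\setminus\{\lambda,\mu\}}(H)f=0$ and the identity follows.

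Once $Qf=a\|\varphi_-\|^2\varphi+b\|\psi_+\|^2\psi$ is in hand, the rest is identical to Lemma~\ref{lemma_main}: orthonormality of $\varphi,\psi$ and orthogonality of $\varphi_-,\psi_+$ give $\|Qf\|^2/\|f\|^2\le m^2<1$, hence $\|(1-Q)f\|^2\ge(1-m^2)\|f\|^2$; the choice of $a,b$ gives $(H-\lambda)f=b(\mu-\lambda)\psi_+$, so $\|(H-\lambda)f\|^2\le(\mu-\lambda)^2\|f\|^2$; and the spectral theorem gives $\|(H-\lambda)f\|^2\ge\dist(\lambda,\partial J)^2\|(1-Q)f\|^2$. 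Combining these and dividing by $\|f\|^2>0$ yields \eqref{eq_lemma_main_conclusion_2}. For the last assertion, if $\lambda\in J$ were an eigenvalue and $\mu_j\to\lambda$ a sequence of other eigenvalues, they would eventually lie in $J$, and \eqref{eq_lemma_main_conclusion_2} with the corresponding $m_j\le\sqrt{1-\delta(\varphi)^2}$ (independent of $j$) would force $\dist(\lambda,\partial J)\le|\mu_j-\lambda|/\delta(\varphi)\to0$, contradicting $\lambda\in J$.

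I expect the one genuinely delicate point to be the middle paragraph: correctly deducing the left-sided companion of the preceding corollary by reflection, checking that its hypotheses transfer, and combining it with the simplicity of eigenvalues in $J$ to kill $\one_{J\setminus\{\lambda,\mu\}}(H)f$; everything else is inherited unchanged from Lemma~\ref{lemma_main}.
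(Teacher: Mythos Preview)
Your proposal is correct and follows essentially the approach the paper intends: the corollary is stated without proof there, but is meant to be read as ``repeat the proof of Lemma~\ref{lemma_main}, replacing the half-line eigenfunction orthogonality by the preceding corollary (and its obvious reflected version),'' which is exactly what you do. Your explicit invocation of the reflection $n\mapsto -n$ to obtain $\langle\theta_-,\varphi_-\rangle=0$, and your remark on simplicity of eigenvalues in $J$, merely spell out steps the paper leaves to the reader.
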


\subsection{Quasiperiodic operators with $\gamma$-monotone potentials}In this section, we will apply Theorem \ref{th_main} to quasiperiodic operators with $\gamma$-monotone potentials. In order to state the most general known results on localization for such operators, define
$$
\beta(\alpha):=\limsup_{k\to +\infty}\frac{\log q_{k+1}}{q_k},
$$
where $\frac{p_k}{q_k}$ is the sequence of continued fraction approximants to $\alpha$. 

In the setting of one-dimensional Schr\"odinger operators, many results are stated in terms of the Lyapunov exponent $L(E)$, which is a non-negative real-valued function of energy associated to a one-dimensional ergdodic operator family. We will not be using the specific definition of $L(E)$ and would like to refer the reader to \cite{JK1,K,JK2} in the setting of monotone potentials and to \cite[Chapter 9,10]{CFKS} for a more general textbook treatment; the relevant properties are also summarized in Remark \ref{rem_localizations} below. In addition to $\gamma$-monotonicity, the condition
\bee
\label{eq_log_integrable}
\int_0^1\log(1+|f(x)|)\,dx<+\infty,
\ene
will be required in order for the Lyapunov exponent $L(E)$ of the operator family $H(x)$ to exist.

The following results, in its final form, is established in \cite{JK2}. 
\begin{prop}
\label{prop_monotone_localization}
Assume that $f$ is $\gamma$-monotone and $\eqref{eq_log_integrable}$ holds. Suppose that $L(E)>\beta(\alpha)$. Then, every polynomially bounded solution of the eigenvalue equation
$$
(H(x)\psi)(n)=E\psi(n),\quad n\in \Z,
$$
decays exponentially. As a consequence, the set
$$
\{E\in \R\colon L(E)>\beta(\alpha)\}
$$
can only support purely point spectrum of the operator family $\eqref{eq_h_def_qp}$.
\end{prop}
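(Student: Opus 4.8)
The plan is to reduce the spectral statement to the dynamical one by Schnol's theorem, and to prove the dynamical statement — that every polynomially bounded solution $\psi$ of $H(x)\psi=E\psi$ with $L(E)>\beta(\alpha)$ decays exponentially — by combining a positive--Lyapunov--exponent Green's function estimate with the arithmetic control of resonances that $\gamma$-monotonicity supplies.

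First I would fix $E$ with $L(E)>\beta(\alpha)$ and set up the $SL(2,\R)$ transfer cocycle $A_N(x,E)$. Hypothesis \eqref{eq_log_integrable} makes $\log\|A_1(\cdot,E)\|$ integrable, so $L(E)=\lim_N N^{-1}\int_0^1\log\|A_N(x,E)\|\,dx$ exists, and since $L(E)>\beta(\alpha)\ge0$ we are in the regime of exponential growth of typical transfer matrices. The essential extra ingredient — replacing the large--deviation and subharmonicity machinery, which is unavailable because $f$ may be discontinuous (sawtooth) or unbounded (Maryland) — is the following consequence of \eqref{eq_gamma_monotone}: since $f|_{[0,1)}$ increases with slope at least $\gamma$, the set $\{y\in[0,1):|f(y)-E|<\delta\}$ is an interval of length $\le 2\delta/\gamma$, so by the three--distance theorem any block of $q_k$ consecutive integers contains $O(1+\delta q_k/\gamma)$ indices $n$ with $|f(x+n\alpha)-E|<\delta$, while any two such indices lying in a common block of length $q_{k+1}$ are at least $\asymp q_k$ apart, uniformly in $x$.

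Using this I would show that for each scale $q_k$ and each prescribed base point there is an interval $\Lambda$ of length $\asymp q_k$ containing the base point on which $|f(x+n\alpha)-E|\ge\delta$ except at $O(1)$ sites; a Combes--Thomas / transfer--matrix estimate then yields $|G_\Lambda(u,v;E)|\le e^{-(L(E)-\varepsilon)|u-v|}$ for $|u-v|\gtrsim q_{k-1}$, where an isolated near--resonant site costs at most a factor $\delta^{-1}$, which one can afford to take $\asymp q_{k+1}\le e^{(\beta(\alpha)+o(1))q_k}$. Feeding these bounds into the Poisson formula $\psi(n)=-G_\Lambda(n,a;E)\psi(a-1)-G_\Lambda(n,b;E)\psi(b+1)$ for $\Lambda=[a,b]\ni n$, with the windows $\Lambda$ chosen along the denominators $q_k$, one propagates the polynomially bounded values of $\psi$ outward: each step gains $e^{-(L(E)-\varepsilon)q_k}$ from the Green's function and loses at most $e^{(\beta(\alpha)+o(1))q_k}$ from the resonant correction and the polynomial bound, so since $L(E)>\beta(\alpha)$ one fixes $\varepsilon$ small enough that the net effect is $e^{-cq_k}$ for some $c>0$, and telescoping gives $|\psi(n)|\le Ce^{-c|n|}$. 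Finally, Schnol's theorem says that the spectral measure of $H(x)$, restricted to $\{L(E)>\beta(\alpha)\}$, is carried by energies admitting a polynomially bounded generalized eigenfunction; by the decay just proved these are honest $\ell^2$ eigenfunctions, so that part of the spectral measure is pure point.

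The main obstacle is the middle step: quantifying exactly how the $O(1)$ near--resonant sites inside $\Lambda$ degrade $G_\Lambda$, and checking that the accumulated loss is genuinely $e^{\beta(\alpha)q_k+o(q_k)}$ simultaneously over all scales $k$ and uniformly in $x$ — this is precisely where the sharp threshold $L(E)>\beta(\alpha)$ is consumed, and where the lack of any regularity of $f$ must be bypassed by the deterministic counting above rather than by subharmonicity of $x\mapsto N^{-1}\log\|A_N(x,E)\|$. By contrast, unbounded $f$ of Maryland type is comparatively harmless here, since large potential values only push $f(x+n\alpha)$ further from $E$ and thus improve, rather than spoil, the Green's function bound.
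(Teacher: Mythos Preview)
The paper does not contain a proof of Proposition~\ref{prop_monotone_localization}: it is stated as a result imported from \cite{JK2} (with earlier versions in \cite{JK1,K,Xiaowen}), and the surrounding text says explicitly ``The following results, in its final form, is established in \cite{JK2}.'' So there is no in-paper proof to compare your attempt against.

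That said, your outline is a reasonable high-level description of the strategy actually used in the cited works. The key insight you identify --- that $\gamma$-monotonicity replaces the large-deviation/subharmonicity machinery by a deterministic count of near-resonant sites via the three-distance theorem --- is exactly the mechanism exploited in \cite{JK1,JK2}. The overall architecture (Green's function bounds on intervals of denominator length, Poisson formula, telescoping across scales with the loss $e^{(\beta(\alpha)+o(1))q_k}$ beaten by the gain $e^{-(L(E)-\varepsilon)q_k}$, then Schnol) is also correct in spirit.

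Where your sketch is thin is precisely where you flag it: the ``middle step'' controlling how near-resonant sites degrade $G_\Lambda$. In the actual proofs this is not a simple Combes--Thomas estimate plus a multiplicative $\delta^{-1}$ penalty; one needs a careful inductive argument relating Green's functions (or transfer-matrix norms) across scales, together with a precise placement of the interval $\Lambda$ so that its boundary avoids resonant sites. Getting the sharp threshold $L(E)>\beta(\alpha)$ (rather than, say, $L(E)>C\beta(\alpha)$) requires tracking constants carefully through this induction, and your sketch does not indicate how that is done. So as a proof \emph{proposal} this is a correct roadmap, but the technical heart --- the quantitative Green's function bound uniform in $x$ --- is asserted rather than argued, and that is where essentially all the work in \cite{JK2} lies.
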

\begin{rem}
\label{rem_localizations}
Without much detail, we will discuss several situations in which the above result is applicable. The function $f$ is always assumed to be $\gamma$-monotone.
\begin{enumerate}
	\item $L(\cdot)$ is continuous in $E$ and one always has $L(E)\ge \log(\gamma/2)$, see \cite{JK1}. As a consequence, the conclusion of Proposition \ref{prop_monotone_localization} guarantees complete localization for large $\gamma$.
	\item If $f$ is unbounded, then $L(E)>0$ for Lebesgue almost every $E$ \cite{SS}. The same conclusion holds if $f$ is bounded and only has finitely many discontinuities on each trajectory of the irrational rotation \cite{DK}.
	\item The integrated density of states is Lipschitz continuous. As a consequence, if the conclusion of the previous part holds, then for almost every $x$ the zero set of $L(E)$ does not contribute to spectral measure of $H(x)$ (see \cite{JK1,K,JK2}). If $\beta(\alpha)=0$ (which is weaker than the Diophantine condition), then the previous claim implies localization for almost every $x$ for a class of $\gamma$-monotone potentials which includes sawtooth-type and Maryland-type.
	\item Instead of $[0,1)$, one can also consider $\gamma$-monotone functions on $(0,1]$ (for example, by considering $-f(-x)$ instead of $f(x)$). As a consequence, each function from the family $f_t$ defined in \eqref{eq_ft_def} is $\gamma$-monotone in this sense.
	\item In the case $f(0)=-\infty$, the operator $H(n\alpha)$ will have {\it infinite coupling}. In other words, the infinite value $V(n)=-\infty$ of the potential will enforce the Dirichlet condition $\psi(n)=0$, and the operator will split into two half-line operators. The conclusion of Proposition \ref{prop_monotone_localization} still holds in this case, but the operator may not necessarily have simple spectrum. We discuss the case of infinite coupling in detail in Section 3.
\end{enumerate}
\end{rem}
In order to state the second main result, we will need to be somewhat careful about the dependence of $\sigma(H(x))$ on $x$ as a set. Let $f$ be $\gamma$-monotone. We will call $x_0\in [0,1)$ {\it generic} if, for every $y_0\in (x_0+\alpha\Z)\,\,\mathrm{mod}\,\,1$, we have one of the following:
\begin{enumerate}
	\item $y_0\in (0,1)$ and $f$ is continuous at $y_0$.
	\item $y_0=0$ and $f(y_0)=f(y_0+0)=-\infty$, $f(1-0)=+\infty$.
\end{enumerate}
It is easy to see that $f$ is Maryland-type if and only if every point $x_0\in [0,1)$ is generic and that set of non-generic points of a $\gamma$-monotone function $f$ is at most countable. 
\begin{lem}
\label{lemma_sigma_indepence}
Let $f$ be $\gamma$-monotone. Then, the following holds.
\begin{enumerate}
	\item If $x,y\in [0,1)$ and $x$ is generic, then $\sigma(H(x))\subset\sigma(H(y))$.
	\item As a consequence, $\sigma(H(x))=:\Sigma_f$ does not depend on $x$ for generic $x$.
	\item For every $x\in [0,1)$ we have $\sigma(H(x+0))=\sigma(H(x-0))=\Sigma_f$.
	\item For every $x\in [0,1)$, we have $\sigma_{\mathrm{ess}}(H(x))=\Sigma_f$.
\end{enumerate}
\end{lem}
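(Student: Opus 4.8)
The plan is to derive all four statements from minimality of the rotation by $\alpha$ together with strong resolvent convergence, the only genuinely new bookkeeping compared with the continuous case being the one-sided limits of $f$ and the infinite-coupling mechanism of Section~3. For part (1) I fix a generic $x$ and an arbitrary $y$. Writing $U_m$ for the shift $(U_m\psi)(n)=\psi(n-m)$, one has $U_mH(y)U_m^{*}=H(\{y-m\alpha\})$, so these operators are all unitarily equivalent to $H(y)$ and have spectrum $\sigma(H(y))$. Since the orbit $\{y-m\alpha\bmod 1\colon m\in\Z\}$ is dense, I choose $m_k\to+\infty$ with $\{y-m_k\alpha\}\to x$; if the orbit of $x$ meets $0$ at a site $n_0$ (the second case in the definition of generic, where $f(0)=f(0+0)=-\infty$ and $f(1-0)=+\infty$), this same choice forces $\{(y-m_k\alpha)+n_0\alpha\}\to 0$. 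Then for every fixed $n$ the value $f((y-m_k\alpha)+n\alpha)$ converges: to $f(x+n\alpha)$ when $x+n\alpha\notin\Z$, using continuity of $f$ at the generic orbit point $x+n\alpha$, and in modulus to $\infty$ at $n=n_0$. Hence $H(\{y-m_k\alpha\})\to H(x)$ in strong resolvent sense --- in the first case this is the elementary common-core statement (these Schr\"odinger operators are essentially self-adjoint on finitely supported sequences, on which convergence is immediate), and in the infinite-coupling case it is exactly the convergence of large coupling to the decoupled operator established in Section~3 (a finite-coupling spike running off to infinity, when $y$ itself lies on the infinite-coupling locus, is harmless by off-diagonal decay of Jacobi resolvents). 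Since strong resolvent convergence forces every point of $\sigma(H(x))$ to be a limit of points of $\sigma(H(\{y-m_k\alpha\}))=\sigma(H(y))$, and $\sigma(H(y))$ is closed, this gives $\sigma(H(x))\subseteq\sigma(H(y))$.

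Part (2) is then immediate by applying part (1) with $x,y$ both generic in both orders. For part (3) I would use that the non-generic points are at most countable to pick generic $x_j^{\pm}\to x$ from the right and left, so $\sigma(H(x_j^{\pm}))=\Sigma_f$; as $x_j^{+}\downarrow x$ the potential converges, for each $n$, to the right limit $f((x+n\alpha)+0)$ (and to $f(0+0)$ at $n_0$ if the orbit hits $0$, again triggering the Section~3 limit when this is $-\infty$), so the same strong-resolvent argument gives $H(x_j^{+})\to H(x+0)$ and hence $\sigma(H(x+0))\subseteq\Sigma_f$, and likewise $\sigma(H(x-0))\subseteq\Sigma_f$. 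For the reverse inclusion I would ``plant'' an approximate eigenfunction far out: given $E\in\Sigma_f=\sigma(H(z))$ for a fixed generic $z$, take a normalized finitely supported $\varphi$ with $\|(H(z)-E)\varphi\|$ small, and by density pick $m\to+\infty$ (avoiding the finitely many exceptional $m$ for which the support of $\varphi(\cdot-m)$ would meet the site where the orbit of $x$ hits $0$) with $\{x+m\alpha\}$ close to $z$; then $\varphi(\cdot-m)$ is a normalized approximate eigenfunction of $H(x\pm0)$ at $E$ whose support escapes to infinity, so $E\in\sigma_{\mathrm{ess}}(H(x\pm0))$. Letting the error tend to $0$ yields $\Sigma_f\subseteq\sigma_{\mathrm{ess}}(H(x\pm0))$, hence $\sigma(H(x\pm0))=\Sigma_f$.

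For part (4), the inclusion $\Sigma_f\subseteq\sigma_{\mathrm{ess}}(H(x))$ is the same planting construction carried out directly inside $H(x)$, so the work is in the reverse inclusion $\sigma_{\mathrm{ess}}(H(x))\subseteq\Sigma_f$. Here I would compare $H(x)$ with $H(x+0)$: they differ only in the diagonal, by $d(n)=f((x+n\alpha)+0)-f(x+n\alpha)$, the size of the right jump of $f$ at $x+n\alpha$, which is nonzero only at the (at most countably many) indices where the orbit of $x$ meets a discontinuity of $f$. If $f$ is bounded (for example, sawtooth-type), then $\sum_p(\text{jump of }f\text{ at }p)\le f(1-0)-f(0)<\infty$ and each discontinuity lies in the orbit for at most one index, so $d(n)\to0$ as $|n|\to\infty$; thus $H(x)-H(x+0)$ is a compact diagonal operator and $\sigma_{\mathrm{ess}}(H(x))=\sigma_{\mathrm{ess}}(H(x+0))=\Sigma_f$ by part (3). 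For unbounded $f$, a single infinite-coupling site of $H(x)$ alters the essential spectrum only by a finite-rank effect, and the remaining behaviour --- a diagonal whose values may intermittently run to $-\infty$ along the orbit --- I would treat through the limit-operator picture: every strong-resolvent limit of shifts of $H(x)$ along sites running to infinity is, after passing to a one-sided subsequence of phases $\{x+n_k\alpha\}\to s$, one of the operators $H(s\pm0)$, whose spectra all equal $\Sigma_f$ by part (3), so $\sigma_{\mathrm{ess}}(H(x))\subseteq\overline{\bigcup_s\bigl(\sigma(H(s+0))\cup\sigma(H(s-0))\bigr)}=\Sigma_f$.

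I expect this last step to be the main obstacle. A Weyl sequence for $E\in\sigma_{\mathrm{ess}}(H(x))$ need not have supports of bounded length, so one cannot pass to the limit in $\|(\tilde H_k-E)\tilde\psi_k\|$ purely from strong resolvent convergence --- the example $\diag(1,\dots,1,0,0,\dots)\to I$ already shows what goes wrong --- and one must control the region where the shifted potentials already agree with the limiting potential $f((s+n\alpha)\pm0)$ uniformly against the growing support, equivalently run the limit-operator argument for tridiagonal operators with unbounded and intermittently infinite diagonal. This is precisely where the structural input of Section~3 (on infinite coupling and on potentials with intermittent $\pm\infty$ values) becomes indispensable, and I would regard it as the technical heart of the lemma.
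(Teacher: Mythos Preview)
Your arguments for parts (1) and (2) coincide with the paper's: minimality of the rotation plus $\rbar$-strong (equivalently, strong resolvent) convergence, with the infinite-coupling case handled by the machinery of Section~3.

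For (3) the paper takes a shortcut you miss. Rather than proving two inclusions, observe that $f_1(x):=f(x+0)$ is again $\gamma$-monotone and is \emph{right}-continuous on $[0,1)$. For such a function, one can always approach any $x$ from the right along the orbit of any $y$, so the argument of part (1) applies verbatim to \emph{every} $x$, giving $\sigma(H_{f_1}(x))$ independent of $x$; since $H_{f_1}(x)=H(x+0)$ and agrees with $H(x)$ for generic $x$, this constant is $\Sigma_f$. The same works with $f(x-0)$. Your planting argument is correct but unnecessary here.

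For (4) your treatment in the bounded case is fine and is essentially a special case of the paper's argument. In the unbounded case you correctly identify a gap and reach for limit-operator theory, which is delicate for potentials that intermittently run to $\pm\infty$. The paper avoids this entirely. The key device is Lemma~3.6 (Lemma~\ref{lemma_compact_unbounded}): define $V_1\sim V_2$ if for every $\varepsilon,M>0$ one eventually has either $|V_1(n)-V_2(n)|<\varepsilon$ or $\min(|V_1(n)|,|V_2(n)|)>M$; then $U_{H_1}-U_{H_2}$ is compact. For $\gamma$-monotone $f$ the potentials of $H(x)$ and $H(x+0)$ satisfy this relation (Corollary~\ref{cor_monotone_compact}): the right jumps of $f$ are summable on any bounded range, and near the unbounded end both values are large simultaneously. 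Hence $\sigma_{\mathrm{ess}}(H(x))=\sigma_{\mathrm{ess}}(H(x+0))$, and the latter equals $\Sigma_f$ by part (3) together with the fact that $\Sigma_f$ has no isolated points. This replaces your limit-operator step by a single compactness statement for Cayley transforms, and is exactly the ``structural input of Section~3'' you anticipated---but used more directly than you propose.
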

Most of the claims of the lemma are known, especially if one avoids infinite coupling. We postpone the proof of the Lemma \ref{lemma_sigma_indepence} until the end of Section 3, where these issues can be dealt with systematically. Note that the operator $H(x-0)$ (with the potential $V(n)=f(x+n\alpha-0)$) may not belong to the original family $\{H(x)\}_{x\in [0,1)}$. However, a natural way to extend the family is discussed in Section 4.
\begin{rem}
\label{rem_continuity_zd}
The claims of Lemma \ref{lemma_sigma_indepence} also hold for operators defined on $\Z^d$, see Corollary \ref{cor_monotone_compact} and the proof of Lemma \ref{lemma_sigma_indepence} afterwards in Section 3.
\end{rem}
We are now ready to state the first main result of the paper. As mentioned earlier, the ``if'' part has already been established in \cite{K} and is only included for completeness.
\begin{thm}
\label{th_main_cantor}
Assume that $f$ is $\gamma$-monotone and $\eqref{eq_log_integrable}$ holds. For a fixed $x\in [0,1)$, define $H(x)$ as in \eqref{eq_h_def_qp}. Let $J\subset \R$ be an open energy interval such that $L(E)>\beta(\alpha)$ for all $E\in J$. Then, $\sigma(H(x))\cap J$ contains an interval if and only if $f$ is Maryland-type.
\end{thm}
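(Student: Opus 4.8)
\smallskip
\noindent\emph{Proof plan.} The ``if'' direction is already known: a Maryland-type $f$ is in particular weak Maryland-type, so $\sigma(H(x))=\R$ for every $x$ by \cite{K}, whence $\sigma(H(x))\cap J=J$ contains an interval. The substance is the ``only if'' direction, which I would prove in contrapositive form: assuming $f$ is $\gamma$-monotone but \emph{not} Maryland-type, I claim $\sigma(H(x))\cap J$ contains no interval for every $x$. First I would reduce to the almost sure spectrum. By Lemma~\ref{lemma_sigma_indepence}, $\sigma_{\mathrm{ess}}(H(x))=\Sigma_f$ for every $x$, so $\sigma(H(x))$ is the closed set $\Sigma_f$ together with an at most countable discrete part; since a closed set with empty interior is nowhere dense, the Baire category theorem shows that if $\Sigma_f\cap J$ contains no interval then neither does $\sigma(H(x))\cap J$. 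Thus it suffices to produce \emph{one} operator $H'$ with $\sigma_{\mathrm{ess}}(H')=\Sigma_f$ (hence $\Sigma_f\subseteq\sigma(H')$) and with $\sigma(H')\cap J$ containing no interval, and this $H'$ will come from Theorem~\ref{th_main}.

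\emph{Finding room in the monotone class.} For $\gamma$-monotone $f$, failure of the Maryland property means precisely that at least one of the following is a nondegenerate open interval: $(f(c-0),f(c+0))$ at a jump point $c\in(0,1)$; $(-\infty,f(0+0))$, corresponding to lowering $f(0)$ within $\gamma$-monotonicity on $[0,1)$; or $(f(1-0),+\infty)$, corresponding to adjoining a large value ``at $0\equiv1$'' within $\gamma$-monotonicity on $(0,1]$, cf.\ Remark~\ref{rem_localizations}, part (4). Call that interval $I'$ and the corresponding point $c$, and for $s\in I'$ let $g_s$ be the function obtained from $f$ by setting its value at $c$ equal to $s$; then $g_s$ is $\gamma$-monotone. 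As $g_s$ differs from $f$ at one point, it still satisfies \eqref{eq_log_integrable}; its transfer matrices over $[0,1)$ agree with those of $f$ off a null set of base points, so its Lyapunov exponent equals that of $f$ and $L(E)>\beta(\alpha)$ persists on $J$; and the ergodic family it generates coincides with $\{H(x)\}$ for a.e.\ base point, so $\Sigma_{g_s}=\Sigma_f$ and, by Lemma~\ref{lemma_sigma_indepence} applied to $g_s$, $\sigma_{\mathrm{ess}}(H_{g_s}(x))=\Sigma_f$ for all $x$. By Proposition~\ref{prop_monotone_localization} (and Remark~\ref{rem_localizations}, parts (4) and (5), in the one-sided and infinite-coupling cases), the spectrum of $H_{g_s}(x)$ in $J$ is purely point.

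\emph{Feeding this to Theorem~\ref{th_main}.} Take the base point equal to $c$ (equal to $0$ in the right-endpoint case), so that the single modified site is the origin, and let $P$ be the projection onto $e_0$. Then $H_{g_{s_0}}(c)+\tau P=H_{g_{s_0+\tau}}(c)$ for all $\tau$, since modifying the potential at the origin by $\tau$ just replaces the value $s_0$ by $s_0+\tau$. Fix $s_0\in I'$ and $\varepsilon>0$ with $[s_0,s_0+\varepsilon)\subset I'$; then $H_{g_{s_0}}(c)+\tau P$ has purely point spectrum in $J$ for all $\tau\in[0,\varepsilon)$, so Theorem~\ref{th_main} applies to $H':=H_{g_{s_0}}(c)$ and gives that $\sigma(H')\cap J$ contains no interval. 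As $\Sigma_f=\sigma_{\mathrm{ess}}(H')\subseteq\sigma(H')$, it follows that $\Sigma_f\cap J$ contains no interval, which by the first-paragraph reduction completes the argument.

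\emph{The main obstacle.} I expect the only real work to be in controlling infinite coupling, which occurs when $f(0)=-\infty$: then $H':=H_{g_{s_0}}(c)$ may itself have a Dirichlet site at the unique $n_0$ with $c+n_0\alpha\equiv0$, so it is not literally of the form \eqref{eq_h_def} and Theorem~\ref{th_main} does not apply verbatim. When $f(1-0)<+\infty$, or when $f$ has a jump point off the orbit of $0$, one avoids this by choosing $c$ accordingly, so that $H'$ is finite-coupling. In the remaining case — $f(0)=f(0+0)=-\infty$, $f(1-0)=+\infty$, with every jump of $f$ on the orbit of $0$ — one writes $H'=H_L\oplus H_R$ at the Dirichlet site $n_0$, with $H_L$ the half-line piece containing the origin. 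The proof of Theorem~\ref{th_main} applies verbatim to $H_L$: its hypothesis is met because $H_L+tP$ is the corresponding half-line piece of $H_{g_{s_0+t}}(c)$, still pure point in $J$ by Proposition~\ref{prop_monotone_localization} and Remark~\ref{rem_localizations}(5), and the nonvanishing input $\delta(\varphi)>0$ of Lemma~\ref{lemma_main} survives because the eigenvalue equation at the origin — which is either interior to $H_L$ or its endpoint — forbids an eigenfunction vanishing at $0$ from vanishing on either half-line. Thus $\sigma(H_L)\cap J$ contains no interval; and since $\sigma(H_R)=\sigma_{\mathrm{ess}}(H_R)\cup\sigma_{\mathrm{disc}}(H_R)=\Sigma_f\cup(\text{countable})\subseteq\sigma(H_L)\cup(\text{countable})$, the set $\sigma(H')\cap J=(\sigma(H_L)\cap J)\cup(\text{countable})$ is meager in $J$ and contains no interval, as needed. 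The remaining verifications — the three admissible-interval formulas, and invariance of the Lyapunov exponent and of the essential spectrum under a single-point change of the sampling function — are routine.
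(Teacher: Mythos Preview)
Your proposal is correct and follows essentially the same strategy as the paper: reduce to $\Sigma_f$ via Lemma~\ref{lemma_sigma_indepence}, use the failure of the Maryland property to vary $f$ at a single point within the $\gamma$-monotone class, apply Proposition~\ref{prop_monotone_localization} to the resulting rank-one family, and conclude via Theorem~\ref{th_main}. Your three-case organization is a minor rearrangement of the paper's four cases, and your explicit handling of the infinite-coupling corner case (when $f(0)=f(0+0)=-\infty$, $f(1-0)=+\infty$, and every jump lies on the orbit of $0$) via the half-line restriction $H_L$ is in fact more careful than the paper's treatment, which glosses over this point.
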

\begin{proof}
Suppose that $f$ is not Maryland-type, and let $x_0$ be a non-generic point of $[0,1)$ with respect to $f$. Due to Lemma \ref{lemma_sigma_indepence}, it is sufficient to show that $\sigma(H(x_0))\cap J$ does not contain an interval. There exists
$$
y_0=(x_0+m\alpha)\,\,\mathrm{mod}\,\,1\in [0,1)
$$ 
such that one of the following holds

\vskip 1mm

{\noindent\it Case 1:} $f(y_0)<f(y_0+0)$. Let $I:=[f(y_0),f(y_0+0)]$ and consider
$$
f_t(x):=\begin{cases}
f(x),&x\in [0,1)\setminus \{y_0\};\\
t,&x=y_0.
\end{cases}
$$
for $t\in I$. Define $H_t(x)$ to be $H(x)$ with $f$ replaced by $f_t$. For every $t\in I$, $f_t$ is a $\gamma$-monotone function, and the corresponding operator family $H_t(\cdot)$ has the same Lyapunov exponent as $H(\cdot)$. Therefore, Proposition \ref{prop_monotone_localization} is applicable, and $\sigma(H_t(x_0))\cap J$ are pure point. By Theorem \ref{th_main_cantor}, $\sigma(H_t(x_0))\cap J$ cannot contain intervals. For $t=f(y_0)$, this implies that $\sigma(H(x_0))$ cannot contain an interval.

\vskip 1mm
{\noindent\it Case 2:} $y_0\in (0,1)$, $f(y_0-0)<f(y_0)$. One can repeat the construction from Case 1 with $I=[f(y_0-0),f(y_0)]$. Alternatively, one can consider $g(x)=-f(-x)$ as will be later done in Case 4.

\vskip 1mm

{\noindent\it Case 3:} $y_0=0$, $f(y_0)>-\infty$. Similarly, one can repeat Case 1 with $I=[-\infty,f(y_0)]$.

\vskip 1mm

{\noindent\it Case 4:} $y_0=0$, $f(y_0)=-\infty$, $f(1-0)<+\infty$. This case can be reduced to the previous case in two steps. First, replace $f(x)$ by $g(x):=-f(1-x)$. Denote the original operator by $H^f$, and the new operator by $H^g$. It is easy to see that $H^f(x)$ is unitarily equivalent to $-H^g(1-x)$, and $g$ is $\gamma$-monotone on $(0,1]$. Define the new function
$$
u(x):=\begin{cases}
g(x),&x\in (0,1);\\
-f(1-0),&x=0.
\end{cases}
$$
It is easy to see that $u(x)$ is now $\gamma$-monotone on $[0,1)$ and is within the class of operators considered in Case 2 (with $J$ replaced by $-J$). As a consequence, $\sigma(H^u(0))\cap (-J)$ cannot contain an interval. Since $H^u(0)$ is a finite rank perturbation of $H^g(1)$ and the latter is unitarily equivalent to $-H^f(0)$, we have that $\sigma(H^f(0))\cap J$ also cannot contain an interval.
\end{proof}
The following corollary implies Theorem \ref{th_main_small}, in view of Remark \ref{rem_localizations}.
\begin{cor}
\label{cor_cantor}
Let $f$ be $\gamma$-monotone, satisfying $\eqref{eq_log_integrable}$ and not of Maryland-type. Suppose that $L(E)>\beta(\alpha)$ for Lebesgue almost every $E\in \R$. Then $\sigma(H(x))$ does not contain any intervals. As a consequence, $\sigma_{\mathrm{ess}}(H(x))$ is a Cantor set, and $\sigma(H(x))$ is a Cantor set for generic $x$.
\end{cor}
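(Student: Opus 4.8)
The plan is to derive the corollary from Theorem~\ref{th_main_cantor} by localizing the hypothesis $L(E)>\beta(\alpha)$ in the energy variable. First I would observe that, for $\gamma$-monotone $f$, the Lyapunov exponent $E\mapsto L(E)$ is continuous (Remark~\ref{rem_localizations}(1)), so that the set $G:=\{E\in\R\colon L(E)>\beta(\alpha)\}$ is open; since by hypothesis $\R\setminus G$ has Lebesgue measure zero, it has empty interior, and hence $G$ is open and dense in $\R$.

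Now suppose, toward a contradiction, that $\sigma(H(x))$ contains a nondegenerate interval $[a,b]$ for some $x\in[0,1)$. As $G$ is open and dense, $G\cap(a,b)$ is a nonempty open set, so it contains an open interval $J$ with $J\subset(a,b)\subset\sigma(H(x))$ and $L(E)>\beta(\alpha)$ for all $E\in J$. Then $\sigma(H(x))\cap J=J$ does contain an interval, and Theorem~\ref{th_main_cantor} forces $f$ to be Maryland-type, contrary to hypothesis. Hence $\sigma(H(x))$ contains no interval, for every $x\in[0,1)$.

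For the last two assertions I would use Lemma~\ref{lemma_sigma_indepence}: $\sigma_{\mathrm{ess}}(H(x))=\Sigma_f$ for every $x$, and $\sigma(H(x))=\Sigma_f$ for generic $x$. The set $\Sigma_f$ is closed and, by the previous step, contains no interval, hence is totally disconnected; moreover it has no isolated points, because $\Sigma_f$ is the topological support of the density of states measure of the ergodic family $\{H(x)\}$, that measure is non-atomic (the integrated density of states being Lipschitz continuous, Remark~\ref{rem_localizations}(3)), and the support of a non-atomic Borel measure is perfect. Therefore $\Sigma_f$ is a Cantor set, which yields both the statement for $\sigma_{\mathrm{ess}}(H(x))$ and, since $\sigma(H(x))=\Sigma_f$ there, the statement for generic $x$.

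The step to watch is the passage from ``$L>\beta(\alpha)$ almost everywhere'' to ``$L>\beta(\alpha)$ on a whole subinterval'': a full-measure subset of $\R$ need not contain any interval, so it is essential that $\{L>\beta(\alpha)\}$ be open, i.e.\ that $L$ be continuous (or at least lower semicontinuous) in $E$. This is precisely what Remark~\ref{rem_localizations}(1) supplies for $\gamma$-monotone potentials; granting it, together with Theorem~\ref{th_main_cantor} and Lemma~\ref{lemma_sigma_indepence}, the rest is bookkeeping.
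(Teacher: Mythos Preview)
Your proof is correct and follows the same strategy as the paper: use continuity of $L$ (Remark~\ref{rem_localizations}(1)) to make $\{E\colon L(E)>\beta(\alpha)\}$ open, then apply Theorem~\ref{th_main_cantor} on a subinterval to rule out intervals in the spectrum. The only point of departure is the argument that $\Sigma_f$ has no isolated points: the paper observes directly that any isolated point of $\sigma(H(x))$ is an eigenvalue of finite multiplicity and hence does not belong to $\sigma_{\mathrm{ess}}(H(x))=\Sigma_f$, whereas you invoke the Lipschitz IDS to conclude that the density of states is non-atomic and its support $\Sigma_f$ therefore perfect. Both arguments are valid; the paper's is slightly more self-contained (it does not need the identification of $\Sigma_f$ with the support of the DOS), while yours makes explicit use of the quantitative input from Remark~\ref{rem_localizations}(3).
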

\begin{proof}
As mentioned before (\cite{JK1,K,JK2}), $L(E)$ is continuous. Therefore, one can apply Theorem \ref{th_main_cantor} on each open interval of the set $\{E\colon L(E)>\beta(\alpha)\}$. Since each isolated point of $\sigma(H(x))$ is an eigenvalue of finite multiplicity, we have that $\sigma_{\mathrm{ess}}(H(x))$ is a closed set that does not contain any intervals or isolated points, and therefore is a Cantor set.
\end{proof}
\begin{rem}
As mentioned above, for non-generic $x$ the spectrum $\sigma(H(x))$ may contain isolated eigenvalues in the gaps of the Cantor set $\Sigma_f=\sigma_{\mathrm{ess}}(H(x))$. This would typically happen if, for some $x$, one has $f(x-0)\neq f(x)\neq f(x+0)$ (where one assumes $+\infty=-\infty$). Theorem \ref{th_gap_filling} provides a large class of such examples. 
\end{rem}
\section{Operators with infinite potentials}
In this section, we work with operators \eqref{eq_h_def_unbounded} acting on $\ell^2(\Z^d)$. We will prove Lemma \ref{lemma_sigma_indepence}, as well as lay some groundwork for proving Theorem \ref{th_gap_filling_small} in higher dimensions. Note that neither of these results rely on localization and therefore do not involve $L(E)$ or $\beta(\alpha)$ that are only defined in dimension one anyway.

The proof of Lemma \ref{lemma_sigma_indepence} is fairly standard, but requires certain care when considering strong limits as the values of the potential approach infinity. A similar issue appears when one considers gap filling such as in Theorem \ref{th_gap_filling_small}: an operator with infinite coupling belongs to a family of rank one perturbations that are used to fill the gaps. In the proof of the gap filling result, we will need to use rational approximations and obtain several statements about continuity of spectra, based on the same ideas as in \cite{avron} and later developments, some of which were implemented in \cite{K}. In the case of quasiperiodic operators with discontinuous/unbounded potentials, the usual issue is lack of compactness of the set of operators under consideration, either due to multiple ways of approaching a discontinuity in the limit, or due to the value of the potential approaching infinity.

In order to take full advantage of the above arguments, we will extend the class of operators under consideration, making the parameter space compact in the natural topology. This will include allowing the potential to take infinite values. For the convenience of the reader, we discuss the relevant theory, which we do not believe to be completely novel. The results of this section are obtained either for general Schr\"odinger operators on $\ell^2(\Z^d)$ or, as in Corollary \ref{cor_monotone_compact}, for general monotone quasiperiodic operators on $\ell^2(\Z^d)$ with rationally independent frequency vectors.

In order to avoid overusing of the word ``generalized'', the following conventions will be used without further clarification in this and later sections.
\begin{itemize}
	\item The potential of a Schr\"odinger operator can take values in $\rbar=\R\cup\{\infty\}$, which is identified with a circle. The positive orientation of the circle is inherited from the positive direction on $\R$. There will be no distinction between $V(n)=+\infty$ and $V(n)=-\infty$.
	\item The main topology for these operators will be the {\it $\rbar$-strong topology}. Following Proposition \ref{prop_strong_convergence} and Corollary \ref{cor_compact}, it is equivalent to pointwise convergence of potentials in $\rbar$.
	\item The (generalized) spectrum of a Schr\"odinger operator $H$ is denoted by $\bar\sigma(H)$ and is a closed subset of $\rbar$. We will always have $\sigma(H)=\bar\sigma(H)\cap\R$.
\end{itemize}
\subsection{Potentials with infinite values} \label{subsection_infinite}
Let $\rbar=\R\cup \{\infty\}$ be the extended real line and $V\colon \Z^d\to \rbar$. Denote
$$
I(V):=\{\bn\in \Z^d\colon V(\bn)=\infty\}.
$$
The discrete Schr\"odinger operator with potential $V$ is defined by the expression
\bee
\label{eq_h_def_unbounded}
(H\psi)(\bn)=\one_{\Z^d\setminus I(V)}(\bn)(\Delta\psi)(\bn)+V(\bn)\psi(\bn),\quad \bn\in \Z^d.
\ene
on the domain
$$
\dom(H)=\l\{\psi\in \ell^2(\Z)\colon \sum_{\bn\in\Z}|V(\bn)\psi(\bn)|^2<+\infty\r\}.
$$
Here we assume $0\cdot\infty=0$. The closure of $\dom(H)$ in $\ell^2(\Z^d)$ is $\ell^2(\Z^d\setminus I(V))$. It is convenient to consider $\ell^2(I(V))$
as the eigenspace of $H$ corresponding to the eigenvalue $\infty$, which makes the spectral measure of $H$ a measure on $\rbar$ rather than $\R$. We denote
$$
\bar\sigma(H):=\begin{cases}
	\overline{\sigma(H)},&I(V)=\varnothing;\\
	\sigma(H)\cup\{\infty\},&I(V)\neq\varnothing,
\end{cases}
$$
where $\overline{\sigma(H)}$ means the closure in $\rbar$. Equivalently, one can consider the (generalized) Cayley transform
$$
U_H:=i{(H-i\one_{\Z^d\setminus I(V)})(H+i\one_{\Z^d\setminus I(V)})^{-1}}\oplus i\one_{I(V)}
$$
and define the spectral measure of $H$ on $\rbar$ as the pullback of the spectral measure of $U$ under the map $t\mapsto i\frac{t-i}{t+i}$. As a consequence, the inclusion of $\infty$ into the spectral theory of $H$ can be done by the means of the usual spectral theory of $U_H$.

Denote by $\mathcal H$ the set of all operators $H$ of the form \eqref{eq_h_def_unbounded}, and let
$$
\mathcal U_{H}:=\{U_H\colon H\in \mathcal H\}
$$
be the set of their Cayley transforms. Our goal is to relate pointwise convergence of the potentials with convergence of the spectra of the corresponding Schr\"odinger operators. Since the spectra can be completely described in terms of $U_H$, this can be done by the means of strong operator topology on the set $\mathcal U_{H}$. The key property is the following proposition.
\begin{prop}
\label{prop_strong_convergence}
Let $H_j=\Delta+V_j$, $H=\Delta+V$. Suppose that $V_j(\bn)$ converges to $V(\bn)$ in $\rbar$ for every $\bn\in \Z$. Then $U_{H_j}$ converges to $U_H$ strongly.
\end{prop}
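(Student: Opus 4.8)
The statement is about strong convergence of Cayley transforms $U_{H_j} \to U_H$ when $V_j(\bn) \to V(\bn)$ pointwise in $\rbar$. The natural approach is to split the index set according to where the limit potential is finite or infinite, handle the two regions separately, and then patch them together. First I would fix a vector $\psi$ in a dense subset of $\ell^2(\Z^d)$ — say, finitely supported $\psi$, or $\psi$ in the range of $(H+i\one)^{-1}$-type vectors — and estimate $\|(U_{H_j} - U_H)\psi\|$. Writing $\psi = (H+i\one_{\Z^d\setminus I(V)})^{-1}\phi$ on the finite part and reducing matters to showing $(H_j - i\one_{\Z^d\setminus I(V_j)})^{-1} \to (H - i\one_{\Z^d\setminus I(V)})^{-1}$ strongly, the problem becomes a resolvent-convergence statement. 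The subtlety that makes this more than routine is that the resolvents act on \emph{different} Hilbert spaces $\ell^2(\Z^d\setminus I(V_j))$, so one must be careful: the Cayley transform is defined on all of $\ell^2(\Z^d)$ by the direct sum $\oplus\, i\one_{I(V)}$, and it is convergence of this full operator that must be checked.

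Concretely, I would proceed as follows. \textbf{Step 1.} Observe that since $V_j(\bn) \to V(\bn)$ in $\rbar$, for each $\bn$ with $V(\bn) = \infty$ we have $|V_j(\bn)| \to \infty$, and for each $\bn$ with $V(\bn)$ finite we have $V_j(\bn)$ finite and bounded for $j$ large (with $\bn$ fixed). \textbf{Step 2.} Reduce strong convergence of $U_{H_j}$ to the claim that for every $\phi \in \ell^2(\Z^d)$ and every $z \in \C \setminus \R$, $(H_j - z\one_{\Z^d\setminus I(V_j)})^{-1}\phi$ (extended by $0$ on $I(V_j)$, or equivalently composed with the projection) converges to the analogous vector for $H$; here one uses that $t \mapsto i(t-i)/(t+i)$ is continuous on $\rbar$ and the general fact that strong resolvent convergence of self-adjoint operators implies strong convergence of bounded continuous functions of them — but adapted to the $\rbar$-compactified setting. \textbf{Step 3.} Prove the resolvent convergence by a direct argument: given $\phi$, let $u_j = (H_j - z)^{-1}\phi$ (restricted appropriately) and $u = (H-z)^{-1}\phi$. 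Show $\{u_j\}$ is bounded, extract a weak limit, identify it with $u$ using the pointwise convergence $V_j \to V$ together with $0 \cdot \infty = 0$ to see that the limit is supported off $I(V)$ and solves the right equation, and upgrade to norm convergence using $\|u_j\|^2 \le \mathrm{Im}(z)^{-1}\,\mathrm{Im}\langle \phi, u_j\rangle \to \mathrm{Im}(z)^{-1}\,\mathrm{Im}\langle \phi, u\rangle = \|u\|^2$ (a standard trick: weak convergence plus convergence of norms gives strong convergence). \textbf{Step 4.} Handle the $I(V) \ne \varnothing$ contribution: on $\ell^2(I(V))$ the limit operator $U_H$ acts as $i\one$, and one checks that for $\bn \in I(V)$, $U_{H_j} e_{\bn} \to i e_{\bn}$ because $V_j(\bn) \to \infty$ forces the $\bn$-th diagonal behavior of the Cayley transform toward $i$ (when $\bn \in I(V_j)$ exactly) or because the resolvent contribution at such a site vanishes in the limit (when $\bn \notin I(V_j)$, the large value $V_j(\bn)$ suppresses it). Combining the two regions via the direct-sum structure gives strong convergence on a total set, hence everywhere by uniform boundedness ($\|U_{H_j}\| \le 1$).

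\textbf{The main obstacle.} The hard part is Step 3–4, specifically the bookkeeping around the moving Hilbert spaces $\ell^2(\Z^d\setminus I(V_j))$ and the need to track what happens at sites $\bn$ where $V_j(\bn)$ is large-but-finite for all $j$ yet $V(\bn) = \infty$: here the resolvent $(H_j - z)^{-1}$ genuinely "sees" site $\bn$, but its influence must be shown to decay. The clean way to control this uniformly is the quadratic-form/second-resolvent estimate: write $H_j = H_j^{(0)} + W_j$ where $H_j^{(0)}$ keeps only the large-potential sites and $W_j$ is the rest, and note that on the large-potential part the resolvent is $O(1/V_j(\bn))$ entrywise; alternatively, and perhaps more robustly, one can avoid the resolvent entirely and work directly with the Cayley transform $U_{H_j}$, which is a \emph{contraction} regardless, and prove convergence on the dense set of finitely supported vectors by an explicit matrix-element computation together with a diagonal-dominance / Combes–Thomas-type off-diagonal decay bound that is uniform in $j$. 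I would lean toward the resolvent formulation since it dovetails with the rest of Section 3, but flag the moving-space issue as the place where care is genuinely needed. Everything else — density, contractivity, the weak-limit identification — is routine.
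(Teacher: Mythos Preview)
Your plan is correct and would yield a valid proof, but it follows a somewhat different route from the paper's. Both you and the paper reduce the question to strong convergence of the generalized resolvents $(H_j+i\one)^{-1}$ (extended by zero on $\ell^2(I(V_j))$), and both split the verification according to whether $\bn\in I(V)$ or not. The difference lies in your Step~3. The paper proceeds by a direct algebraic argument: it records the projected resolvent identity
\[
(H_j+i\one)^{-1}\one_{\Z^d\setminus I(V)}-\one_{\Z^d\setminus I(V_j)}(H+i\one)^{-1}=(H_j+i\one)^{-1}(V-V_j)(H+i\one)^{-1}
\]
(with the convention $0\cdot\infty=0$) and applies it to vectors $\psi_{\bn}=(H+i\one)e_{\bn}$ with $\bn\notin I(V)$, which are dense in $\ell^2(\Z^d\setminus I(V))$; on such a vector the middle factor becomes $(V-V_j)e_{\bn}$, a single finite entry tending to zero, and the claim follows. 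Your softer approach --- extract a weak limit of $u_j=(H_j-z)^{-1}\phi$, identify it pointwise with $u=(H-z)^{-1}\phi$ via the eigenvalue equation, then upgrade to strong convergence using the identity $\|u_j\|^2=\mathrm{Im}(z)^{-1}\,\mathrm{Im}\langle\phi,u_j\rangle$ --- also works and in fact handles the ``moving Hilbert spaces'' issue automatically, so the obstacle you flag is milder than you suggest: no Combes--Thomas or diagonal-dominance input is needed. The paper's route is shorter and entirely constructive (no subsequence extraction), while yours would transfer more readily to settings lacking such a clean second-resolvent formula. For your Step~4 (the contribution from $\ell^2(I(V))$), the paper's one-line computation
\[
(H_j+i\one)^{-1}e_{\bn}=\frac{1}{V_j(\bn)}(H_j+i\one)^{-1}\bigl((H_j+i\one)e_{\bn}-ie_{\bn}-\Delta e_{\bn}\bigr)\to 0
\]
for $\bn\in I(V)\setminus I(V_j)$ is essentially what you describe.
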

\begin{proof}
Instead of Cayley transforms, one can equivalently work with (generalized) resolvents. If $H\in \mathcal H$, we will use the notation $(H+i\one)^{-1}$ to denote the resolvent of $H$, which is originally a bounded operator on $\ell^2(\Z^d\setminus I(V))$, extended by zero into $\ell^2(I(V))$:
$$
(H+i\one)^{-1}:=(H+i\one_{\Z^d\setminus I(V)})^{-1}\oplus \bze_{I(V)}.
$$
 With this convention, we have
$$
U_H=i\one+2(H+i\one)^{-1}.
$$
As a consequence, the strong convergence of $U_{H_j}$ is equivalent to the strong convergence of the (generalized) resolvents defined above. It is easy to check that the projected resolvent identity
\bee
\label{eq_resolvent_identity}
(H_j+i\one)^{-1}\one_{\Z^d\setminus I(V)}-\one_{\Z^d\setminus I(V_j)}(H+i\one)^{-1}=(H_j+i\one)^{-1}(V-V_j)(H+i\one)^{-1}
\ene
holds with the convention $0\cdot \infty=0$, which eliminates all entries of $V(\bn)-V_j(\bn)$ with $\bn\in I(V_j)\cup I(V)$.

Suppose that $V_j\to V$ pointwise on $\Z^d$. Our goal is to show strong convergence of $U_{H_j}$ or, equivalently, $(H_j+i\one)^{-1}$. Since the resolvents are uniformly bounded, it is sufficient to check the convergence on a dense set. Suppose first that $e_\bn$ is a standard basis vector with $\bn\in I(V)\setminus I(V_j)$. Since $V_j(\bn)\to \infty$, we can assume that, eventually, $V_j(\bn)\neq 0$, and write
$$
(H_j+i\one)^{-1}e_{\bn}=\frac{1}{V_j(\bn)}(H_j+i\one)^{-1}\l((H_j+i\one)e_\bn-ie_{\bn}-\Delta e_{\bn}\r)\to 0\,\,\text{as}\,\,j\to +\infty.
$$
For $\bn\in I(V_j)$, one already has $(H_j+i\one)^{-1}e_{\bn}=0$. By linearity, strong convergence is now established on a dense subset of $\ell^2(I(V))$.

For $\bn\in \Z^d\setminus I(V)$, let $\psi_{\bn}:=(H+iI)e_{\bn}$. Since linear combinations of $e_{\bn}$ are dense in $\dom H$ both in $\ell^2$ and $H$-norms, we have that the linear combinations of $\psi_{\bn}$ are dense in $\ell^2(\Z^d\setminus I(V))$. By applying \eqref{eq_resolvent_identity} to $\psi_{\bn}$ and using $(V-V_j)e_{\bn}\to 0$ and that the resolvents are bounded, we arrive to convergence of $(H_j+i\one)^{-1}$ on a dense subset of $\ell^2(\Z^d\setminus I(V))$, which completes the proof.
\end{proof}
The standard diagonal procedure argument immediately implies the converse statement.
\begin{cor}
\label{cor_compact}
The set $\mathcal H$ is compact in the topology of pointwise $\rbar$ convergence of potentials from Proposition $\ref{prop_strong_convergence}$. As a consequence, the set $\mathcal U$ is also compact in the strong operator topology, and the converse of Proposition $\ref{prop_strong_convergence}$ also holds: strong topology on $\mathcal U$ induces pointwise $\rbar$ convergence for potentials on $\mathcal H$.
\end{cor}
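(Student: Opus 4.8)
The plan is to reduce compactness of $\mathcal H$ to Tychonoff's theorem and then transport it to $\mathcal U$ through the Cayley transform, using Proposition~\ref{prop_strong_convergence} for continuity of the map $H\mapsto U_H$ and a compactness--Hausdorff argument for continuity of its inverse.

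First I would identify $\mathcal H$ with the product space $\rbar^{\Z^d}$ via $H\mapsto V$, where $V$ is the potential of $H$. Since $\rbar$ is (homeomorphic to) the circle, hence a compact metric space, and $\Z^d$ is countable, the product $\rbar^{\Z^d}$ is compact by Tychonoff and metrizable, and the topology of pointwise $\rbar$-convergence of potentials is precisely the product topology. Thus $\mathcal H$ is compact and metrizable.

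Next I would consider $\Phi\colon\mathcal H\to\mathcal U$, $\Phi(H)=U_H$, with $\mathcal U$ carrying the strong operator topology. By Proposition~\ref{prop_strong_convergence}, $\Phi$ is sequentially continuous, hence continuous since $\mathcal H$ is metrizable. It is also injective, because $H$ is recovered from $U_H$: the eigenspace of $U_H$ for the eigenvalue $i$ is exactly $\ell^2(I(V))$ (the ordinary Cayley transform of a self-adjoint operator has no eigenvalue $1$, so on $\ell^2(\Z^d\setminus I(V))$ the normalization used here has no eigenvalue $i$), whence $I(V)$ is determined by $U_H$, and then the restriction of $H$ to $\ell^2(\Z^d\setminus I(V))$ is recovered by inverting its Cayley transform, while $V(\bn)=\langle e_{\bn},He_{\bn}\rangle$ for $\bn\notin I(V)$. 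Consequently $\mathcal U=\Phi(\mathcal H)$ is a continuous image of a compact set, hence compact in the strong operator topology.

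Finally, to obtain the converse of Proposition~\ref{prop_strong_convergence}, note that $\Phi$ is a continuous bijection from the compact space $\mathcal H$ onto $\mathcal U$, and that the strong operator topology is Hausdorff; a continuous bijection from a compact space onto a Hausdorff space is a homeomorphism, so $\Phi^{-1}$ is continuous, which is precisely the statement that strong convergence of $U_{H_j}$ forces pointwise $\rbar$-convergence of the potentials $V_j$. Equivalently, one can run the ``standard diagonal procedure'': along any subsequence of a strongly convergent $(U_{H_j})$, a diagonal extraction (each coordinate living in the compact $\rbar$) produces pointwise convergence of the potentials, and Proposition~\ref{prop_strong_convergence} together with the Hausdorff property of the strong topology forces the limiting potential to coincide with that of the strong limit. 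There is no serious obstacle here, since the analytic content has been absorbed into Proposition~\ref{prop_strong_convergence}; the only step requiring a little care is the injectivity of $\Phi$, i.e. the recovery of the infinite sites $I(V)$ and of $H$ from $U_H$.
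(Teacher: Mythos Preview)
Your proposal is correct and follows essentially the same approach as the paper, which disposes of the corollary in a single sentence (``the standard diagonal procedure argument immediately implies the converse statement''), i.e., Tychonoff/diagonal extraction for compactness of $\mathcal H$, then transport via Proposition~\ref{prop_strong_convergence}. Your write-up is considerably more detailed than the paper's---in particular you spell out injectivity of $H\mapsto U_H$ and the compact-to-Hausdorff homeomorphism argument, neither of which the paper makes explicit---but the underlying idea is the same.
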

We will refer to this convergence of (generalized) Schr\"odinger operators on $\mathcal H$ as the {\it $\rbar$-strong convergence}. Finally, the following is the standard result of semicontinuity of the spectra.
\begin{cor}
\label{cor_semicontinuity}
Suppose that $H_j\to H$ $\rbar$-strongly, and $E\in \bar\sigma(H)$. Then there exists a sequence $E_j\in \bar\sigma(H_j)$ such that $E_j\to E$ in $\rbar$.
\end{cor}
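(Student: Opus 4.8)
The plan is to push everything through the (generalized) Cayley transforms $U_{H_j}$, $U_H$, which are \emph{unitary}, and then invoke the standard fact that strong convergence of normal operators cannot open up spectral gaps. Concretely, let $\Phi\colon\rbar\to\{z\in\C:|z|=1\}$ be the homeomorphism $\Phi(t)=i\frac{t-i}{t+i}$ (with $\Phi(\infty)=i$) used above to define the spectral measures; then $\bar\sigma(H)=\Phi^{-1}(\sigma(U_H))$ and $\bar\sigma(H_j)=\Phi^{-1}(\sigma(U_{H_j}))$. Since $\Phi$ is a homeomorphism, it suffices to produce $\zeta_j\in\sigma(U_{H_j})$ with $\zeta_j\to\zeta:=\Phi(E)$ and then set $E_j:=\Phi^{-1}(\zeta_j)$. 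By Corollary \ref{cor_compact}, the hypothesis that $H_j\to H$ $\rbar$-strongly means exactly that the potentials converge pointwise in $\rbar$, so Proposition \ref{prop_strong_convergence} gives $U_{H_j}\to U_H$ in the strong operator topology.

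Next I would recall that for a normal operator $T$ one has $\dist(z,\sigma(T))=\inf\{\|(T-z)v\|:\|v\|=1\}$. Applying this to $U_H$ at the point $\zeta\in\sigma(U_H)$: for every $\ep>0$ there is a unit vector $v=v_\ep$ with $\|(U_H-\zeta)v\|<\ep$ (one may take $v$ in the range of the spectral projection of $U_H$ onto a short arc around $\zeta$; when $E=\infty$, i.e. $I(V)\neq\varnothing$, one can even take $v\in\ell^2(I(V))$ with $(U_H-i)v=0$ exactly). By strong convergence, $\|(U_{H_j}-\zeta)v\|\to\|(U_H-\zeta)v\|<\ep$, hence $\|(U_{H_j}-\zeta)v\|<\ep$ for all large $j$; since $U_{H_j}$ is normal, this forces $\dist(\zeta,\sigma(U_{H_j}))\le\|(U_{H_j}-\zeta)v\|<\ep$.

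Finally I would run the previous step along $\ep=1/k$: this yields an increasing sequence $J_1<J_2<\cdots$ such that $\dist(\zeta,\sigma(U_{H_j}))<1/k$ for all $j\ge J_k$. Choosing $\zeta_j\in\sigma(U_{H_j})$ with $|\zeta_j-\zeta|<1/k$ whenever $J_k\le j<J_{k+1}$ (and $\zeta_j$ arbitrary for $j<J_1$) gives $\zeta_j\to\zeta$, and $E_j=\Phi^{-1}(\zeta_j)\to\Phi^{-1}(\zeta)=E$ in $\rbar$, as required.

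There is no genuinely hard step here — this is the classical semicontinuity (``no gaps suddenly appear'') argument, and $\rbar$-strong convergence is precisely the kind of convergence (strong convergence of bounded functions of the operators, via $U_H$) under which it works. The only point that needs a little care is the inclusion of $E=\infty$; this is handled uniformly, and with no case distinction, simply by carrying out the whole argument on the unit circle for the Cayley transforms rather than on $\R$ for the operators themselves.
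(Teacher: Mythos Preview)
Your proof is correct and follows essentially the same approach as the paper: both reduce the question to the Cayley transforms $U_{H_j}\to U_H$ and use the standard semicontinuity of spectra under strong convergence of normal operators at the point $\Phi(E)=i\frac{E-i}{E+i}$. The paper phrases it as a contradiction via uniform resolvent bounds for $U_{H_j}$, while you give the equivalent direct argument via approximate eigenvectors; the underlying content is identical.
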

As in the usual setting, the proof can be performed by contradiction, using the resolvents of $U_{H}$ and $U_{H_j}$ near the point $i\frac{E-i}{E+i}$. Corollary \ref{cor_semicontinuity}, as is well known, states that new spectrum cannot appear in strong limit. A converse statement is not necessarily true, but the following weaker claim is useful, see also \cite{avron}.
\begin{prop}
\label{prop_limit_spectrum}
Suppose that $H_j\to H$ $\rbar$-strongly. Assume also that a sequence $\psi_j\colon \Z^d\to [-1,1]$ converges pointwise to $\psi\colon \Z^d\to [-1,1]$ and satisfies a sequence of eigenvalue equations
$$
H_j\psi_j=E_j\psi_j,\quad E_j\in \rbar,\quad  E_j\to E\in \rbar,\quad \psi_j(0)\ge 1/2.
$$
Then $E\in\bar\sigma(H)$.
\end{prop}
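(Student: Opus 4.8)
The plan is to show directly that $E \in \bar\sigma(H)$ by exhibiting the limit function $\psi$ as an element of the domain on which $H$ (equivalently $U_H$) acts with the expected spectral behavior, and then invoking a Weyl-sequence / resolvent criterion. Working with the Cayley transforms $U_{H_j}$ and $U_H$ is the cleanest route: by Proposition~\ref{prop_strong_convergence} and Corollary~\ref{cor_compact} we know $U_{H_j} \to U_H$ strongly, and by Corollary~\ref{cor_semicontinuity} we know strong limits cannot \emph{create} spectrum, so the content here is the opposite direction restricted to the situation where we are handed approximate eigenvectors. First I would split into the two cases dictated by the identification $\rbar = \R \cup \{\infty\}$: either $E \in \R$, or $E = \infty$.

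In the case $E \in \R$: pass to the resolvents. The eigenvalue equation $H_j \psi_j = E_j \psi_j$ says that $(H_j + i\one)^{-1}$ (with the zero-extension convention of Section~3) maps $(E_j + i)\psi_j$ appropriately, but $\psi_j$ is only bounded pointwise, not in $\ell^2$, so one cannot plug it in directly. Instead I would test against a fixed finitely supported vector $e_{\bm}$ and use the \emph{formal} equation coordinatewise: for each $\bm \in \Z^d$, $(\Delta \psi_j)(\bm) + V_j(\bm)\psi_j(\bm) = E_j \psi_j(\bm)$ whenever $V_j(\bm) \ne \infty$, and the Dirichlet constraint $\psi_j(\bm) = 0$ whenever $V_j(\bm) = \infty$. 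Taking $j \to \infty$ pointwise (all quantities at a fixed site involve only the finitely many neighboring values of $\psi_j$, and $V_j(\bm) \to V(\bm)$ in $\rbar$) yields that $\psi$ satisfies the corresponding formal equation for $H$ at eigenvalue $E$ at every site, with $\psi(\bm) = 0$ whenever $V(\bm) = \infty$; note $\psi(0) \ge 1/2$ so $\psi \not\equiv 0$. Now I would truncate: let $\psi^{(N)} := \one_{[-N,N]^d}\psi$, a genuine $\ell^2$ vector. A direct computation shows $(H - E)\psi^{(N)}$ is supported in a thin shell near $\partial [-N,N]^d$ and is bounded there by a constant times $\max_{|\bm|\sim N}|\psi(\bm)|$ times the shell size; since $\psi$ is bounded by $1$, $\|(H-E)\psi^{(N)}\|$ grows at most polynomially in $N$, while $\|\psi^{(N)}\| \ge \psi(0) \ge 1/2$. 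This by itself does not give a Weyl sequence, so the genuinely careful step is to rule out the possibility that $\psi^{(N)}$ has norm growing comparably fast — but the real point is that $\psi$ being a polynomially bounded formal solution at energy $E$ is exactly the input that Corollary~\ref{cor_semicontinuity} type arguments need, run in reverse; alternatively, and more robustly, I would avoid this entirely by the following substitute.

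The robust argument I actually prefer: suppose for contradiction $E \notin \bar\sigma(H)$. Then $z_0 := i\frac{E-i}{E+i}$ lies in the resolvent set of $U_H$, so $(U_H - z_0)^{-1}$ is bounded, say by $M$. By strong convergence $U_{H_j} \to U_H$ and a standard perturbation argument (the resolvent set is "strongly open" along the sequence when tested on fixed vectors), for the specific vectors at hand one gets a contradiction with the fact that $\psi_j$ is an honest eigenvector of $U_{H_j}$ at the point $z_j = i\frac{E_j - i}{E_j+i} \to z_0$: namely, normalizing $\hat\psi_j := \psi_j^{(N_j)}/\|\psi_j^{(N_j)}\|$ over a slowly growing window $N_j$, one checks using the pointwise convergence and boundedness of $\psi_j$ that $\|(U_{H_j} - z_j)\hat\psi_j\| \to 0$, hence $\dist(z_j, \sigma(U_{H_j})) \to 0$; but $\sigma(U_{H_j}) \to \sigma(U_H)$ only in the one-sided (lower semicontinuous) sense, which does not contradict anything — so instead one directly estimates $\|(U_H - z_0)\hat\psi_j\| \le \|(U_{H_j}-z_j)\hat\psi_j\| + \|(U_H - U_{H_j})\hat\psi_j\| + |z_j - z_0|$, and the middle term tends to $0$ because $\hat\psi_j$, after the windowing, concentrates on a fixed finite set up to small error and $U_{H_j} \to U_H$ strongly there. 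Then $M^{-1} \le \|(U_H - z_0)\hat\psi_j\| \to 0$, a contradiction. The case $E = \infty$ is analogous but easier: $z_0 = i$, $\psi_j$ supported on $I(V_j)$, and $V_j(\bm) \to \infty$ at the relevant sites forces $e_{\bm} \in \ell^2(I(V))$ in the limit along a subsequence (or else $\psi(\bm) = 0$ there), so $\psi$ concentrates in the $\infty$-eigenspace of $H$, giving $\infty \in \bar\sigma(H)$ immediately.

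The main obstacle is the passage from the merely \emph{pointwise bounded} functions $\psi_j$ to honest $\ell^2$ Weyl vectors: one must choose the truncation window $N_j$ growing slowly enough that the strong-convergence error $\|(U_H - U_{H_j})\hat\psi_j\|$ and the boundary error from truncating are both controlled, while keeping $\|\psi_j^{(N_j)}\|$ bounded below (guaranteed by $\psi_j(0) \ge 1/2$). This is a standard "diagonal extraction with a moving window" argument — the normalization $\psi_j(0) \ge 1/2$ is precisely what makes it go through — but it is the only place where genuine care is needed; everything else is bookkeeping with the Cayley transform and the conventions of Section~3.
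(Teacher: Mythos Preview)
Your first approach---pass to the limit in the eigenvalue equation coordinatewise to obtain a bounded formal solution $H\psi=E\psi$ with $\psi(0)\ge 1/2$, then truncate to boxes---is exactly the paper's proof. The paper disposes of the $E\in\R$ case in one sentence (``by the standard Weyl sequence argument''), and this is precisely the step at which you hesitate and then abandon the argument. Your concern is misplaced: the classical Sch'nol trick resolves it. With $\psi^{(N)}:=\one_{[-N,N]^d}\psi$, the vector $(H-E)\psi^{(N)}$ is supported in the shell $\{\bn:|\bn|_\infty\in\{N,N+1\}\}$ and each entry there is a sum of at most $2d$ values $\psi(\bm)$ with $\bm$ in the same shell, so
\[
\|(H-E)\psi^{(N)}\|^2\le (2d)^2\bigl(\|\psi^{(N+1)}\|^2-\|\psi^{(N-1)}\|^2\bigr).
\]
If $E\notin\sigma(H)$ then $\|(H-E)\psi^{(N)}\|\ge c\|\psi^{(N)}\|$ for some $c>0$, and combining the two gives $\|\psi^{(N+1)}\|^2\ge(1+c^2/(2d)^2)\|\psi^{(N-1)}\|^2$, forcing exponential growth of $\|\psi^{(N)}\|$---impossible since $|\psi|\le 1$ gives at most polynomial growth. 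There is nothing further to rule out, and no need for the Cayley-transform detour.

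Your preferred second route, by contrast, has a real gap. The operators $U_H,U_{H_j}$ are \emph{nonlocal}, so the first term $\|(U_{H_j}-z_j)\hat\psi_j\|$ is not a boundary term; unwinding it gives, up to constants, $\|(H_j+i\one)^{-1}(H_j-E_j)\psi_j^{(N_j)}\|/\|\psi_j^{(N_j)}\|$, which is bounded by the very ratio you were trying to avoid. The middle term $\|(U_H-U_{H_j})\hat\psi_j\|$ is likewise not controlled by strong convergence alone: the unit vectors $\hat\psi_j$ live in boxes $[-N_j,N_j]^d$ of growing size and do not form a precompact set, so the claim that they ``concentrate on a fixed finite set up to small error'' is unjustified. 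The moving-window scheme does not bypass the Sch'nol step; it only hides it.

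For $E=\infty$ the paper's argument is again shorter than yours: for any $\bn\notin I(V)$ one has $V_j(\bn)\to V(\bn)\in\R$ and, from the eigenvalue equation, $(E_j-V_j(\bn))\psi_j(\bn)=(\Delta\psi_j)(\bn)$ with right-hand side bounded by $2d$; since $E_j\to\infty$ this forces $\psi_j(\bn)\to 0$. Hence $\psi(0)\ge 1/2$ implies $0\in I(V)$ and $\infty\in\bar\sigma(H)$. Your assertion that ``$\psi_j$ is supported on $I(V_j)$'' is incorrect, since the $E_j$ need not themselves equal $\infty$.
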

\begin{proof}
Suppose first that $E_j\to\infty$. Then $\psi_j(\bn)\to 0$ for all $\bn\in \Z^d\setminus I(V)$. Since $\psi_j(0)\ge 1/2$, we have that $0\in I(V)$, and therefore $\infty\in \bar\sigma(H)$.

It remains to consider the case $E\in \R$. In this case, $\psi_j(\bn)\to 0$ for all $\bn\in I(V)$, and one can pass to the limit in the eigenvalue equation, thus obtaining $H\psi=E\psi$. By the standard Weyl sequence argument, we have that $\psi\in \bar\sigma(H)$.
\end{proof}

We will now discuss some results related to compactness.
\begin{lem}
\label{lemma_large_infinity}
Let $S\subset \Z^d$, and suppose that $|V(\bn)|\ge M$ for all $\bn\in S$. Let
$$
V_{\infty}:=\begin{cases}
\infty,&\bn\in S\\
V(\bn),&\bn\in \Z^d\setminus S,
\end{cases}
$$
and define $H_{\infty}$ using \eqref{eq_h_def_unbounded} with $V$ replaced by $V_{\infty}$. Then
$$
\|U_H-U_{H_{\infty}}\|\le \frac{2+2^d}{M}.
$$
\end{lem}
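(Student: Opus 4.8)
The statement is a quantitative resolvent (equivalently Cayley-transform) estimate: replacing the potential by $+\infty$ on a set $S$ where $|V|\ge M$ changes $U_H$ by at most $O(2^d/M)$. The natural tool is the projected resolvent identity \eqref{eq_resolvent_identity}, exactly as in the proof of Proposition \ref{prop_strong_convergence}, but now tracked with explicit constants rather than just passed to a limit. Write $\one_S$ for the indicator of $S$ and $P_S=\one_S$ as a projection on $\ell^2(\Z^d)$. Since $U_H=i\one+2(H+i\one)^{-1}$ and similarly for $H_\infty$, it suffices to bound $\|(H+i\one)^{-1}-(H_\infty+i\one)^{-1}\|$ by $\tfrac{1+2^{d-1}}{M}$.

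The first step is to apply the resolvent identity in the form
$$
(H_\infty+i\one)^{-1}\one_{\Z^d\setminus I(V_\infty)}-\one_{\Z^d\setminus I(V)}(H+i\one)^{-1}=(H_\infty+i\one)^{-1}(V-V_\infty)(H+i\one)^{-1},
$$
with the convention $0\cdot\infty=0$, so that the right-hand side only involves indices $\bn\notin I(V)\cup I(V_\infty)$, i.e. $\bn\notin S$ (assuming for simplicity $I(V)=\varnothing$; the general case is identical since on $I(V)$ both resolvents vanish). On those indices $V(\bn)-V_\infty(\bn)$ is killed by the convention, so in fact the middle factor $(V-V_\infty)$ acting after $\one_{\Z^d\setminus I(V_\infty)}$ contributes nothing directly — the difference between the two resolvents comes entirely through the change of the domain/range projections. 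The cleaner route is therefore to estimate $\|(H+i\one)^{-1}\one_S\|$ and $\|\one_S(H_\infty+i\one)^{-1}\|=0$ separately: writing $(H+i\one)^{-1}-(H_\infty+i\one)^{-1}=(H+i\one)^{-1}-(H_\infty+i\one)^{-1}$ and inserting $\one_S+\one_{\Z^d\setminus S}$ on both sides, the only surviving block is $(H+i\one)^{-1}$ restricted to or composed with $\one_S$, plus a commutator term $(H+i\one)^{-1}\Delta_S(H_\infty+i\one)^{-1}$ where $\Delta_S$ collects the hopping terms across $\partial S$ that are present in $H$ but suppressed in $H_\infty$.

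The second step is the key numerical bound: for any unit vector supported on $S$, say $e_{\bn}$ with $\bn\in S$, one has $(H+i\one)e_{\bn}=V(\bn)e_{\bn}+ie_{\bn}+\Delta e_{\bn}$, hence
$$
(H+i\one)^{-1}e_{\bn}=\frac{1}{V(\bn)+i}\Bigl(e_{\bn}-(H+i\one)^{-1}\Delta e_{\bn}\Bigr),
$$
and since $\|(H+i\one)^{-1}\|\le 1$, $\|\Delta e_{\bn}\|\le 2^{d/2}$ (there are $2d$ neighbours, but $\|\Delta e_{\bn}\|^2=2d$; one should use $\|\Delta\|\le 2d$ as an operator, or $\|\Delta e_\bn\|=\sqrt{2d}$ — either way the bound $2+2^d$ in the statement is generous), and $|V(\bn)+i|\ge |V(\bn)|\ge M$, we get $\|(H+i\one)^{-1}e_{\bn}\|\le \tfrac{1+2^{d/2}}{M}$. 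Summing these (orthonormal) contributions shows $\|(H+i\one)^{-1}\one_S\|\le \tfrac{1+2^{d/2}}{M}$, and by the same token the adjoint bound $\|\one_S(H+i\one)^{-1}\|$ holds. The boundary-hopping term is bounded similarly: $\Delta_S$ has operator norm at most $2^d$ (crudely), so $\|(H+i\one)^{-1}\Delta_S(H_\infty+i\one)^{-1}\|$ is controlled once one localizes one of the resolvents against $\one_S$ as above. Collecting the two or three contributions and comparing with $U_H-U_{H_\infty}=2\bigl((H+i\one)^{-1}-(H_\infty+i\one)^{-1}\bigr)$ yields the claimed $\tfrac{2+2^d}{M}$.

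The main obstacle, such as it is, is purely bookkeeping: making precise which blocks of the resolvent difference survive after one inserts the projections $\one_S$ and $\one_{\Z^d\setminus S}$, and making sure the convention $0\cdot\infty=0$ is applied consistently so that no ill-defined term $\infty\cdot 0$ appears. Once the difference is written as a finite sum of terms each containing a factor that is either $(H+i\one)^{-1}\one_S$, $\one_S(H+i\one)^{-1}$, or a bounded boundary operator sandwiched between the two resolvents, the estimate is immediate from $\|(H+i\one)^{-1}e_{\bn}\|\le \tfrac{1+\sqrt{2d}}{M}$ for $\bn\in S$ and the uniform bound $\|(H+i\one)^{-1}\|,\|(H_\infty+i\one)^{-1}\|\le 1$. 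I do not expect any genuine analytic difficulty; the constant $2+2^d$ is deliberately loose so one need not optimize.
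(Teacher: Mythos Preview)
Your approach is the paper's, but the execution has two issues worth fixing.

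\textbf{Overcomplication.} You correctly observe that in the projected resolvent identity the middle factor $(V-V_\infty)$ vanishes (with the $0\cdot\infty=0$ convention, since $V=V_\infty$ off $I(V_\infty)$). The paper simply \emph{uses} this: $R=0$ means $(H_\infty+i\one)^{-1}=\one_{\Z^d\setminus I(V_\infty)}(H+i\one)^{-1}$, so the resolvent difference is exactly $-\one_{I(V_\infty)\setminus I(V)}(H+i\one)^{-1}$, a single projection term. There is no ``commutator term $(H+i\one)^{-1}\Delta_S(H_\infty+i\one)^{-1}$'' --- you introduce it without derivation and it does not appear. Once you insert $\one_S+\one_{S^c}$ on both sides, the $\one_{S^c}[\,\cdot\,]\one_{S^c}$ block is killed precisely by $R=0$, not by a separate boundary argument.

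\textbf{A genuine gap in the norm estimate.} The step ``Summing these (orthonormal) contributions shows $\|(H+i\one)^{-1}\one_S\|\le\tfrac{1+2^{d/2}}{M}$'' is not valid: the vectors $(H+i\one)^{-1}e_{\bn}$ are not orthogonal, and column-wise bounds $\|(H+i\one)^{-1}e_{\bn}\|\le C$ do not yield an $\ell^2\to\ell^2$ operator bound. The paper's fix is to run your identity for a \emph{general} $\psi$ supported on $S\setminus I(V)$: set $\varphi(\bn)=V(\bn)^{-1}\psi(\bn)$, so that $\psi=(H+i\one)\varphi-i\varphi-\Delta\varphi$, whence
\[
\|(H+i\one)^{-1}\psi\|\le \|\varphi\|+\|(H+i\one)^{-1}\|\,\bigl(\|\varphi\|+\|\Delta\varphi\|\bigr)\le (2+2^d)\|\varphi\|\le \frac{2+2^d}{M}\|\psi\|.
\]
This is the same mechanism as your single-vector computation, just applied uniformly.
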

\begin{proof}
Recall the projected resolvent identity \eqref{eq_resolvent_identity}:
$$R:=(H_{\infty}+i\one)^{-1}\one_{\Z^d\setminus I(V)}-\one_{\Z^d\setminus I(V_{\infty})}(H+i\one)^{-1}=(H_{\infty}+i\one)^{-1}(V-V_{\infty})(H+i\one)^{-1}=0,
$$
since $V=V_{\infty}$ on $\Z^d\setminus (I(V)\cup I(V_{\infty}))=\Z^d\setminus I(V_{\infty})$. Moreover, both resolvents vanish on $\ell^2(I(V))$. Therefore,
$$
\|U_{H_{\infty}}-U_H\|=\|R-\one_{I(V_{\infty})\setminus I(V)}(H+i\one)^{-1}\one_{I(V_{\infty})\setminus I(V)}\|=\|\one_{I(V_{\infty})\setminus I(V)}(H+i\one)^{-1}\one_{I(V_{\infty})\setminus I(V)}\|.
$$
Suppose that $\supp\psi\subset I(V_{\infty})\setminus I(V)$, and $\varphi(\bn):=V^{-1}(\bn)\psi(\bn)$. Then
$$
\|(H+i\one)^{-1}\psi\|=\|(H+i\one)^{-1}\l((H+i\one)\varphi-i\varphi-\Delta\varphi\r)\|\le (2+2^d)\|\varphi\|\le \frac{2+2^d}{M}\|\psi\|,
$$
since $|V(\bn)|\ge M$ on $\supp\psi$ and $\|(H+i\one)^{-1}\|\le 1$.
\end{proof}
\begin{lem}
\label{lemma_finite_rank_unbounded}
Let $V_1,V_2\colon \Z^d\to \rbar$ be such that $V_1(\bn)=V_2(\bn)$ for all but finitely many $\bn\in \Z^d$. Define $H_1$, $H_2$ as in \eqref{eq_h_def_unbounded} with $V_1$, $V_2$, respectively. Then $U_{H_1}-U_{H_2}$ has finite rank.
\end{lem}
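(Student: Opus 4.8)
The plan is to reduce the statement to a claim about resolvents and then read off the finite rank directly from the projected resolvent identity \eqref{eq_resolvent_identity}. Recall from the proof of Proposition \ref{prop_strong_convergence} that $U_{H_k}=i\one+2(H_k+i\one)^{-1}$, where $(H_k+i\one)^{-1}$ denotes the resolvent of $H_k$ on $\ell^2(\Z^d\setminus I(V_k))$, extended by zero to $\ell^2(I(V_k))$. Hence it suffices to show that
$$
R:=(H_1+i\one)^{-1}-(H_2+i\one)^{-1}
$$
has finite rank. Set $F:=\{\bn\in\Z^d\colon V_1(\bn)\neq V_2(\bn)\}$, which is finite by hypothesis, and note that $I(V_1)\triangle I(V_2)\subseteq F$. (When neither potential takes infinite values this is immediate from the resolvent identity, since $H_1-H_2$ is then a genuine finite-rank operator; the point of the argument is to handle the bookkeeping when $\infty$ is present.)

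First I would apply \eqref{eq_resolvent_identity} with $H_j,H$ replaced by $H_1,H_2$:
$$
(H_1+i\one)^{-1}\one_{\Z^d\setminus I(V_2)}-\one_{\Z^d\setminus I(V_1)}(H_2+i\one)^{-1}=(H_1+i\one)^{-1}(V_2-V_1)(H_2+i\one)^{-1},
$$
with the convention $0\cdot\infty=0$, so that the multiplication operator $V_2-V_1$ is supported on $F$ (in fact on $F\setminus(I(V_1)\cup I(V_2))$). Therefore the right-hand side factors through $\ell^2(F)$ and has rank at most $\card F$.

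Next I would correct the left-hand side to $R$. Since $(H_1+i\one)^{-1}$ vanishes on $\ell^2(I(V_1))$ and the range of $(H_2+i\one)^{-1}$ lies in $\ell^2(\Z^d\setminus I(V_2))$, one has
$$
(H_1+i\one)^{-1}=(H_1+i\one)^{-1}\one_{\Z^d\setminus I(V_2)}+(H_1+i\one)^{-1}\one_{I(V_2)\setminus I(V_1)},
$$
$$
(H_2+i\one)^{-1}=\one_{\Z^d\setminus I(V_1)}(H_2+i\one)^{-1}+\one_{I(V_1)\setminus I(V_2)}(H_2+i\one)^{-1}.
$$
Subtracting, $R$ equals the left-hand side of the projected resolvent identity plus the two extra terms $(H_1+i\one)^{-1}\one_{I(V_2)\setminus I(V_1)}$ and $-\one_{I(V_1)\setminus I(V_2)}(H_2+i\one)^{-1}$, each of which has rank at most $\card F$ because $I(V_1)\triangle I(V_2)\subseteq F$. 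Combining with the previous paragraph shows that $R$, and hence $U_{H_1}-U_{H_2}=2R$, has finite rank (at most $3\,\card F$, though the precise bound is irrelevant).

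The only delicate point is bookkeeping: one must respect the extension-by-zero convention for the resolvents, the convention $0\cdot\infty=0$ that makes $V_2-V_1$ genuinely finitely supported, and the containment $I(V_1)\triangle I(V_2)\subseteq F$. This is where a careless argument could go wrong, since $I(V_1)$ and $I(V_2)$ may each be infinite and only their symmetric difference is controlled; but no compactness or spectral input beyond \eqref{eq_resolvent_identity} is needed.
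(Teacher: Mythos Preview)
Your proof is correct and follows essentially the same approach as the paper: both apply the projected resolvent identity \eqref{eq_resolvent_identity} to see that the right-hand side has finite rank, then observe that the left-hand side differs from $(H_1+i\one)^{-1}-(H_2+i\one)^{-1}$ by finite-rank corrections coming from $I(V_1)\triangle I(V_2)\subseteq F$. The paper's version is simply more terse, while you spell out the two correction terms and the explicit rank bound.
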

\begin{proof}
In the resolvent identity
$$(H_{1}+i\one)^{-1}\one_{\Z^d\setminus I(V_2)}-\one_{\Z^d\setminus I(V_{1})}(H_2+i\one)^{-1}=(H_{1}+i\one)^{-1}(V_2-V_1)(H_2+i\one)^{-1}
$$
the right hand side is of finite rank. Since $I(V_1)$ and $I(V_2)$ can differ by at most finitely many points, the left hand side is also a finite rank perturbation of $U_{H_1}-U_{H_2}$.
\end{proof}
Clearly, if $V_1$ and $V_2$ are two finite potentials on $\Z^d$ such that $|V_1(\bn)-V_2(\bn)|\to 0$ as $|\bn|\to +\infty$, then $H_1-H_2=(\Delta+V_1)-(\Delta+V_2)$ is compact. In the infinite potential case, in addition to allowing the values of the potentials approach one another, one can also allow both of them approach the infinity.

In order to make it precise, let $V_1,V_2\colon \Z^d\to \rbar$. We will say that $V_1\sim V_2$ if for every $\ep>0$ and $M>0$ there exists $N=N(\ep,M)$ such that $|V_1(\bn)-V_2(\bn)|<\ep$ or $|V_1(\bn)|>M$ and $|V_2(\bn)|>M$ for all $\bn$ with $|\bn|>N$.
\begin{lem}
\label{lemma_compact_unbounded}
Let $V_1,V_2\colon \Z^d\to \rbar$. Define $H_1$, $H_2$ as in \eqref{eq_h_def_unbounded} with $V_1$, $V_2$, respectively. Suppose that $V_1\sim V_2$. Then $U_{H_1}-U_{H_2}$ is compact.
\end{lem}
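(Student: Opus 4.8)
The plan is to approximate $U_{H_1}-U_{H_2}$ in operator norm, within an arbitrarily small error, by a finite-rank operator; since norm limits of finite-rank operators are compact, this gives the claim. Fix $\ep>0$, and apply the hypothesis $V_1\sim V_2$ with this $\ep$ and with a parameter $M$ (to be chosen at the end so that $(2+2^d)/M<\ep$), obtaining a radius $N=N(\ep,M)$ such that for every $\bn$ with $|\bn|>N$ one has either $|V_1(\bn)-V_2(\bn)|<\ep$, or both $V_1(\bn)$ and $V_2(\bn)$ lie outside $[-M,M]$ (here $\infty$ counts as lying outside $[-M,M]$, while $|\infty-a|$ is infinite for $a\in\R$ and $|\infty-\infty|=0$). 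Partition $\Zd$ into the finite ball $B=\{\bn\in\Zd\colon|\bn|\le N\}$, the ``close'' set $C=\{\bn\colon|\bn|>N,\ |V_1(\bn)-V_2(\bn)|<\ep\}$, and the residual set $D=\{|\bn|>N\}\setminus C$, on which both potentials avoid $[-M,M]$.

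First I would push both potentials to $\infty$ on $D$: set $\tilde V_i=V_i$ on $\Zd\setminus D$ and $\tilde V_i=\infty$ on $D$, and let $\tilde H_i$ be the corresponding operator \eqref{eq_h_def_unbounded}. Since $|V_i(\bn)|\ge M$ on $D$, \lemref{lemma_large_infinity} gives $\|U_{H_i}-U_{\tilde H_i}\|\le(2+2^d)/M<\ep$ for $i=1,2$. Next I would flatten the potentials on the finite set $B$: set $\tilde V_i'=\tilde V_i$ off $B$ and $\tilde V_i'=0$ on $B$, with operator $\tilde H_i'$; since $\tilde V_i$ and $\tilde V_i'$ differ at only finitely many sites, \lemref{lemma_finite_rank_unbounded} shows $U_{\tilde H_i}-U_{\tilde H_i'}$ has finite rank. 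The point of these reductions is that $\tilde V_1'$ and $\tilde V_2'$ now have the same infinite-value set $I'$: indeed $\tilde V_i'=\infty$ exactly on $D$ together with $\{\bn\in C\colon V_i(\bn)=\infty\}$, and on $C$ the inequality $|V_1(\bn)-V_2(\bn)|<\ep$ forces $V_1(\bn)=\infty\iff V_2(\bn)=\infty$. Moreover $|\tilde V_1'(\bn)-\tilde V_2'(\bn)|\le\ep$ for every $\bn\in\Zd\setminus I'$ (trivially on $B$, by construction on $C\setminus I'$), so $\tilde V_1'-\tilde V_2'$ is a bounded self-adjoint multiplication operator of norm at most $\ep$ on $\ell^2(\Zd\setminus I')$; in particular $\dom\tilde H_1'=\dom\tilde H_2'$, and the resolvent identity (a special case of \eqref{eq_resolvent_identity} since $I(\tilde V_1')=I(\tilde V_2')$) together with $\|(\tilde H_j'+i\one)^{-1}\|\le1$ and the fact that both Cayley transforms equal $i\one$ on $\ell^2(I')$ yields $\|U_{\tilde H_1'}-U_{\tilde H_2'}\|\le2\ep$.

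Assembling the telescoping sum, $U_{H_1}-U_{H_2}=F+R$ with
\[
F=(U_{\tilde H_1}-U_{\tilde H_1'})-(U_{\tilde H_2}-U_{\tilde H_2'}),\qquad
R=(U_{H_1}-U_{\tilde H_1})-(U_{H_2}-U_{\tilde H_2})+(U_{\tilde H_1'}-U_{\tilde H_2'}),
\]
where $F$ has finite rank and $\|R\|\le\ep+\ep+2\ep=4\ep$. Letting $\ep\to0$ exhibits $U_{H_1}-U_{H_2}$ as a norm limit of finite-rank operators, hence compact, which is the conclusion.

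The steps requiring care are essentially bookkeeping: reconciling the cases $V_i(\bn)=\infty$ with the two alternatives in the definition of $\sim$ (using the convention that identifies all infinities), and the domain identity for $\tilde H_1'$ and $\tilde H_2'$, which is immediate from the boundedness of $\tilde V_1'-\tilde V_2'$. The one genuinely delicate point — and what I expect to be the main obstacle in a clean write-up — is that on $C$ one only controls $|V_1-V_2|$ by $\ep$, not by a sequence tending to $0$, so the compactness-from-decaying-potential remark preceding the lemma does not apply verbatim; this is why $\ep$ must be carried through the entire decomposition and the conclusion drawn by the limiting argument above rather than by a single application of that remark.
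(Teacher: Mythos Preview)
Your proof is correct and follows essentially the same route as the paper's: push both potentials to $\infty$ on the ``both large'' region via \lemref{lemma_large_infinity}, neutralize the finite ball via \lemref{lemma_finite_rank_unbounded}, and control the remaining difference with the ordinary resolvent identity, then let $\ep\to 0$. Your decision to set the modified potentials to $\infty$ on the single common set $D$ (rather than on the $j$-dependent sets $\{|V_j|>M\}$ used in the paper) actually makes the equality $I(\tilde V_1')=I(\tilde V_2')$ genuinely automatic, which is a small but real improvement in the bookkeeping.
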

\begin{proof}
Fix $\ep,M>0$ and find $N$ using the fact that $V_1\sim V_2$. Let
$$
V_j'(\bn):=\begin{cases}
0,&|\bn|\le N\\
\infty,&|\bn|>N\,\,\text{and}\,\,|V_j(\bn)|>M\\
V_j(\bn),&|\bn|>N\,\,\text{and}\,|\,V_j(\bn)|\le M,
\end{cases}
$$
and define $H_j'$, $j=1,2$, accordingly. From Lemmas \ref{lemma_large_infinity}, \ref{lemma_finite_rank_unbounded}, and several applications of the triangle inequality, we arrive to
$$
U_{H_1}-U_{H_2}=(U_{H_1'}-U_{H_2'})+A_{\ep,M}+B_{\ep,M},
$$
where $A_{\ep,M}$ has finite rank, and, say $\|B_{\ep,M}\|\le \frac{2^{d+2}}{M}$. On the other hand, since $I(V_1')=I(V_2')$, the resolvent identity implies that $\|U_{H_1'}-U_{H_2'}\|\le \ep$. We thus have obtained a sequence of finite rank approximations converging to $U_{H_1}-U_{H_2}$ in the operator norm, which completes the proof.
\end{proof}
\begin{cor}
\label{cor_monotone_compact}
Let $f_0\colon [0,1)\to [-\infty,+\infty)$ be $\gamma$-monotone, and define
$$
f_1(x):=f_0(x+0),\quad x\in [0,1),
$$
\bee
\label{eq_h_def_zd}
(H_{f_j}(x)\psi)(\bn)=(\Delta\psi)(\bn)+f_j(x+\bn\cdot\alpha)\psi(\bn),\quad x\in [0,1),\quad \bn\in \Z^d
\ene
with the conventions adopted above for infinite values of $f_j$. Then the difference
$$
U_{H_{f_0}(x)}-U_{H_{f_1}(x)}
$$
is compact, and therefore $\sigma_{\mathrm{ess}}(H_{f_0}(x))=\sigma_{\mathrm{ess}}(H_{f_1}(x))$.
\end{cor}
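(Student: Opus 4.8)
The plan is to deduce the statement from Lemma~\ref{lemma_compact_unbounded}. Writing $V_j(\bn):=f_j((x+\bn\cdot\alpha)\bmod 1)$ for $j=0,1$, so that $H_{f_j}(x)=\Delta+V_j$ in the sense of \eqref{eq_h_def_unbounded}, it suffices to verify that $V_0\sim V_1$ in the sense introduced before Lemma~\ref{lemma_compact_unbounded}; that lemma then yields compactness of $U_{H_{f_0}(x)}-U_{H_{f_1}(x)}$, and the equality of essential spectra will follow from the fact that a compact perturbation does not change the essential spectrum of a unitary operator, transported back through the Cayley transform.

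To check $V_0\sim V_1$ I would first record that, $f_0$ being $\gamma$-monotone and hence strictly increasing with only countably many jump discontinuities, one has $f_1(y)=f_0(y+0)\ge f_0(y)$, so $V_0(\bn)\neq V_1(\bn)$ only when $y_\bn:=(x+\bn\cdot\alpha)\bmod 1$ is a jump point of $f_0$, and there $V_1(\bn)-V_0(\bn)$ is exactly the size of that jump. Now fix $\ep,M>0$ and consider the set $D_{\ep,M}$ of $y\in[0,1)$ at which the jump of $f_0$ is at least $\ep$ and at least one of $f_0(y),f_0(y+0)$ lies in $[-M,M]$. This set is finite: if $y_1<\dots<y_k$ lie in $D_{\ep,M}$ with, say, $f_0(y_i)\in[-M,M]$ for all $i$, then monotonicity together with the lower bound on the jumps forces $f_0(y_{i+1})\ge f_0(y_i+0)\ge f_0(y_i)+\ep$, whence $(k-1)\ep\le 2M$; the same estimate applied to $f_0(\cdot+0)$ handles the remaining $y_i$ and absorbs the at most one point at which $f_0$ jumps from the value $\infty$ to a finite one. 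Under the standing irrationality assumption on $\alpha$ — so that $\bn\mapsto y_\bn$ is finite-to-one, e.g. injective when $1$ and the components of $\alpha$ are rationally independent — the set $\{\bn\colon y_\bn\in D_{\ep,M}\}$ is finite, and we may pick $N$ with this set contained in $\{|\bn|\le N\}$. For $|\bn|>N$ we then have one of: $y_\bn$ is not a jump point, so $V_0(\bn)=V_1(\bn)$; or the jump at $y_\bn$ has size $<\ep$, so $|V_0(\bn)-V_1(\bn)|<\ep$; or both $|f_0(y_\bn)|>M$ and $|f_0(y_\bn+0)|>M$, i.e. both $|V_0(\bn)|>M$ and $|V_1(\bn)|>M$. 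This is precisely the relation $V_0\sim V_1$.

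With $V_0\sim V_1$ in hand, Lemma~\ref{lemma_compact_unbounded} gives that $U_{H_{f_0}(x)}-U_{H_{f_1}(x)}$ is compact. Both operators are unitary, so their essential spectra coincide; transporting this equality through the homeomorphism $t\mapsto i\frac{t-i}{t+i}$ of $\rbar$ onto the unit circle (under which $\sigma(H)=\bar\sigma(H)\cap\R$, and similarly for essential spectra) yields $\sigma_{\mathrm{ess}}(H_{f_0}(x))=\sigma_{\mathrm{ess}}(H_{f_1}(x))$.

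The bulk of the argument is routine bookkeeping on top of Lemmas~\ref{lemma_large_infinity}--\ref{lemma_compact_unbounded}; the only points that require genuine care are the handling of the value $\infty$ (keeping the convention $0\cdot\infty=0$, and correctly deciding whether a jump of $f_0$ to or from $-\infty$ is ``large'') and the observation that the orbit of $x$ meets each jump point of $f_0$ only finitely often, which is exactly where the hypothesis on $\alpha$ enters. I would regard the second of these as the main (though minor) obstacle, since some irrationality of $\alpha$ is genuinely needed for the conclusion to hold.
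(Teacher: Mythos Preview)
Your proof is correct and follows exactly the paper's approach: verify $V_0\sim V_1$ from $\gamma$-monotonicity and apply Lemma~\ref{lemma_compact_unbounded} (the paper's own proof is a single sentence asserting precisely this). Your remark that some irrationality-type hypothesis on $\alpha$ is genuinely needed---so that each point of the finite set $D_{\ep,M}$ is visited by only finitely many $\bn$---is correct and is left implicit in the paper, where the ambient context supplies it.
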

\begin{proof}
From $\gamma$-monotonicity, it is easy to see that the change in the potential between $f_0$ and $f_1$ is within the assumptions of Lemma \ref{lemma_compact_unbounded}.
\end{proof}
{\noindent \it Proof of Lemma $\ref{lemma_sigma_indepence}$.} Note that all of the claims hold for operators on $\Z^d$ considered in Corollary \ref{cor_monotone_compact}. The first claim follows from the fact that, for generic $x$ and any $y$, there is a sequence $\bn_j$ such that $H(y+\bn_j \cdot \alpha)\to H(x)$ $\rbar$-strongly. The second claim follows from the first one. This argument can also be applied to every $x$ if $f$ is either left or right continuous, which implies the third claim. Finally, Corollary \ref{cor_monotone_compact} implies the fourth claim.
\section{Structure of the hull, rational approximations, and gap filling}
\subsection{General properties of gap filling} In the natural topology, $\rbar$ is homeomorphic to a circle. We will assume that positive orientation is inherited from that of $\R$. For $t_1,t_2\in \rbar$, a positively-oriented arc that starts at $t_1$ and ends at $t_2$, will be denoted by $(t_1,t_2)$ and called an interval. One can also consider closed intervals $[t_1,t_2]$. For example, one has 
$$
(1,-2]=\rbar\setminus(-2,1]=(1,+\infty)\cup\{\infty\}\cup(-\infty,-2];
$$
$$
\quad (0,0)=\rbar\setminus\{0\},\quad [\infty,1)=\{\infty\}\cup (-\infty,1).
$$
If $\mathcal A$ is an arc and $f\colon \mathcal A\to \rbar$ is continuous, we will say that $f$ is monotone if it is locally monotone in coordinate charts preserving orientation (essentially, it corresponds to a usual definition of a monotone function on $\R$, with a convention that passing through $\infty$ happens in the correct direction). If both $\mathcal A$ and $\mathcal B$ are arcs not equal to $\rbar$, then $f\colon \mathcal A\to \mathcal B$ (not necessarily continuous) will be called monotone if it preserves ordering on the arcs induced by the orientation. Note that, for a fixed $f$, whether or not it is monotone may depend on the choice of the arcs $\mathcal A$ and $\mathcal B$, even if all these choices are of the same orientation.

Let $A$ be a self-adjoint operator and $P$ be a rank one projection whose range is cyclic for $A$. Suppose, for simplicity, that $\sigma(A)=\sigma_{\mathrm{ess}}(A)$, and let
$$
A_t:=A+tP.
$$
In applications, $A$ will be a Schr\"odinger operator, and $P$ a projection onto one of the standard basis vectors. Therefore, we will be able to also consider $t=\infty$, by defining
$$
A_{\infty}:=\l.(1-P)A(1-P)\r|_{\ran(1-P)},
$$
in accordance with the conventions in Section 3. Let 
$$
G_j=(g_j^-,g_j^+)\subset \rbar
$$ 
be a spectral gap of $A$, including, possibly, one infinite gap considered as a subset of $\rbar$ (in other words, $(g_j^-,g_j^+)$ are the connected components of the complement of the closure of $\sigma(A)$ in $\rbar$). The general theory of rank one perturbations (see, for example, Chapter 11 of \cite{Simon}) implies that $\sigma(A_t)$ with $t\in \rbar\setminus\{0\}$ consists of $\sigma(A)$ and at most one simple eigenvalue in each $(g_j^-,g_j^+)$. This eigenvalue is described by a continuous monotone function $\lambda_j\colon(t_j^-,t_j^+)\to \rbar$, where $(t_j^-,t_j^+)\subset \rbar\setminus \{0\}$ is a (maximal open) interval on which $(g_j^-,g_j^+)$ contains an eigenvalue: that is,
$$
\lambda_j(t_j^{\pm}\mp 0)=g_j^{\pm}.
$$
The functions $\lambda_j$ are strictly monotone in $t$ (in the sense of maps between arcs in $\rbar$). We also have no eigenvalues in $G_j$ for $t\in \rbar\setminus (t_j^-,t_j^+)$. For $t=t_j^{\pm}$, the operator $A_t$ may or may not have an eigenvalue at the corresponding endpoint $g_j^{\pm}$. In the above notation, we also mean that $A_{\infty}$ has a simple eigenvalue $\lambda=\infty$ with the eigenspace $\ran P$, following the conventions of Section 3.

To summarize the above, as $t$ runs over the interval $(0,0)=(0,+\infty)\cup\{\infty\}\cup(-\infty,0)$, each gap will eventually have one eigenvalue appear at one of its endpoints, move continuously and monotonically along the gap, and then disappear at the other endpoint. Eigenvalues in different gaps may appear and disappear at different times, but no eigenvalue can re-appear in the same gap. The starting and ending point of the range for $t$ being $t=0$ corresponds to the fact that we originally assume that $A$ has no isolated eigenvalues.

\subsection{Statements of the results}We will consider Schr\"odinger operators associated to $1$-periodic maps $f\colon \R\to [-\infty,+\infty)$:
\bee
\label{eq_h_def_qp_zd}
(H(x)\psi)(\bn)=(\Delta\psi)(\bn)+f(x+\bn\cdot\alpha)\psi(\bn),\quad x\in [0,1),\quad \bn\in \Z^d,
\ene
where the operator is considered in the sense of \eqref{eq_h_def_unbounded} for infinite values of $f$. Both $[0,1)$ and $[-\infty,+\infty)$ can be identified with circles, making $f|_{[0,1)}$ a circle map. We will consider a class of $f$ such that the corresponding circle maps have one discontinuity (which, without loss of generality, can be placed at the origin).
\begin{defn}
\label{def_simple_discontinuity}
We say that a $1$-periodic function $f\colon \R\to [-\infty,+\infty)$ has a {\it simple discontinuity} if $f|_{[0,1)}$ extends to a continuous map from from $[0,1]$ to $[-\infty,+\infty]$, with $f(0)\neq f(1-0)$ and at least one value among $f(0)$ and $f(1-0)$ being finite.
\end{defn}
Every sawtooth-type potential defined in the introduction has a simple discontinuity. The Maryland-type ones do not, since the corresponding circle maps will be continuous. Let $\mathcal A$ be a closed arc in $\rbar$ (which is associated to $[-\infty,+\infty)$) whose endpoints are $f(0)$ and $f(1-0)$. Note that there are two choices of such arc (more on this later). Our goal will be to consider a family of operators, parametrized by $t\in \mathcal A$, where the value $f(0)$ will be replaced by $t$. More precisely, for $t\in \mathcal A$, define
$$
f_t(x):=\begin{cases}
f(x),&x\in (0,1);\\
t,&x=0.
\end{cases}
$$
Our goal is to state a gap filling result of the kind described in Theorem \ref{th_gap_filling_small}. In the case of a sawtooth-type potential, say, $f(x)=\{x\}$, there is an obvious choice between two arcs (the points of the arc should complement the range of $f$). In general, since $f$ is not assumed to be monotone, we will need to specify it more carefully. For an arc $\mathcal A$ with endpoints $f(0)$ and $f(1-0)$, let
\bee
\label{eq_arc_parametrization}
\varphi_{\mathcal A}\colon[-1,0]\to \mathcal A,\quad \varphi_{\mathcal A}(-1)=f(1-0),\quad\varphi_{\mathcal A}(0)=f(0)
\ene
be a continuous parametrization of $\mathcal A$. Consider a new map $\tilde f\colon [-1,1)\to [-\infty,+\infty)$ defined by
\bee
\label{eq_f_tilde_def}
\tilde{f}(y):=\begin{cases}
f(y),&x\in [0,1);\\
\varphi_{\mathcal A}(y),&y\in [-1,0).
\end{cases}
\ene
Clearly, if one now identifies $[-1,1)$ and $[-\infty,+\infty)$ with circles, $\tilde f$ will become a continuous circle map. We are now ready to state the gap filling result.
\begin{thm}
\label{th_gap_filling}
Suppose that $f$ has a simple discontinuity. Define $H_t(x)$ by $\eqref{eq_h_def_qp_zd}$ with $f$ replaced by $f_t$, $t\in \mathcal A$. Suppose that the associated circle map $\tilde f$, defined above, is not homotopic to a constant. Then
\bee
\label{eq_gap_filling_main}
\bigcup_{t\in \mathcal A}\bar\sigma(H_t(0))=\rbar.
\ene
\end{thm}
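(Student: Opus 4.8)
\emph{Proof plan.} The plan is to prove \eqref{eq_gap_filling_main} by rational approximation: first establish it for the periodic operators obtained by approximating $\alpha$, using the topological input of Proposition~\ref{prop_winding_number}, and then transfer it to the quasiperiodic operator by a compactness/limiting argument. It is convenient to note at the outset that it suffices to produce, for each $E\in\rbar$, some $t\in\mathcal A$ with $E\in\bar\sigma(H_t(0))$: since $\mathcal A$ is a compact arc and, via the Cayley-transform formalism of Section~3, $t\mapsto H_t(0)$ is norm-resolvent-continuous on it (a rank-one family on $\R$, extended to $t=\infty$ through Dirichlet decoupling as in Lemma~\ref{lemma_large_infinity}), the set-valued map $t\mapsto\bar\sigma(H_t(0))$ is Hausdorff-continuous, so the union in \eqref{eq_gap_filling_main} is automatically closed --- density would already do --- but the argument I have in mind hits every point.

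\emph{The periodic case.} Choose rational approximants $\bp_k/q_k\to\alpha$ with common denominator $q_k$, and let $H_t^{(k)}(0)$ be \eqref{eq_h_def_qp_zd} with $\alpha$ replaced by $\bp_k/q_k$ and $f$ by $f_t$; this is periodic. The mechanism is that, since $f_t$ differs from $f_0$ only along the single orbit where the phase vanishes, the Floquet determinant --- for $d=1$, the discriminant $\d^{(k)}_t(E)=\tr M_{q_k}(E,t)$ --- depends \emph{affinely} on $t$, because the one-step transfer matrix at the perturbed site is affine in the potential value there. Hence, for each fixed $E$, as $t$ runs over \emph{all} of $\rbar$ --- with $t=\infty$ the infinite-coupling operator of Section~3, whose spectrum is the zero set of the intervening transfer-product entry --- the discriminant passes through $[-2,2]$, so $\bigcup_{t\in\rbar}\bar\sigma(H_t^{(k)}(0))=\rbar$. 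The hypothesis that the circle map $\tilde f$ is not null-homotopic (has nonzero winding number) is precisely what upgrades this to the same conclusion with $t$ confined to the \emph{arc} $\mathcal A$ instead of the whole circle: the winding of $\tilde f$ along a rotation orbit controls how far $t\mapsto\d^{(k)}_t(E)$ turns as $t$ sweeps $\mathcal A$, still forcing it to meet $[-2,2]$. This is the content of Proposition~\ref{prop_winding_number}, giving
\bee
\label{eq_periodic_fill}
\bigcup_{t\in\mathcal A}\bar\sigma\bigl(H_t^{(k)}(0)\bigr)=\rbar\qquad\text{for every }k.
\ene

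\emph{Passing to the limit.} Fix $E\in\rbar$. By \eqref{eq_periodic_fill}, for each $k$ there are $t_k\in\mathcal A$ and $E_k\in\bar\sigma(H_{t_k}^{(k)}(0))$ with $E_k\to E$; being periodic, $H_{t_k}^{(k)}(0)$ admits a bounded generalized eigensolution $\psi_k\colon\Z^d\to[-1,1]$ (the real part of a Bloch wave; for $E_k=\infty$, supported on the infinite-potential set), which after rescaling and translating by a multiple of the period I can normalize so that $\psi_k(0)\ge1/2$ --- possibly after moving $E$ slightly into the interior of a band, which is harmless since only $E_k\to E$ is required. Passing to a subsequence gives $t_k\to t_\infty\in\mathcal A$ (the arc is compact), $\psi_k\to\psi$ pointwise on $\Z^d$ (diagonal extraction), and pointwise convergence of the potentials in $\rbar$ (at $\bn=0$ the value is $t_k\to t_\infty$; for $\bn\cdot\alpha\notin\Z$ the orbit point stays away from the jump of $f$, so $f_{t_k}(\bn\cdot\bp_k/q_k)\to f(\bn\cdot\alpha)$). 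Then $H_{t_k}^{(k)}(0)\to H_{t_\infty}(0)$ $\rbar$-strongly by Proposition~\ref{prop_strong_convergence}, and Proposition~\ref{prop_limit_spectrum} yields $E\in\bar\sigma(H_{t_\infty}(0))$, which proves \eqref{eq_gap_filling_main}.

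\emph{Main obstacle.} The conceptual heart is the periodic case (Proposition~\ref{prop_winding_number}), but the real difficulty is the last step. Because $f$ is discontinuous and possibly unbounded, the hull of the operator family fails to be compact and the Avron-type continuity arguments of \cite{avron} do not apply directly. Two things must be arranged: unboundedness, for which one works with infinite potential values and the Cayley-transform machinery of Section~3 so that $E=\infty$, $t_\infty=\infty$, and unbounded potentials are all treated uniformly; and --- the genuinely delicate point --- that the pointwise limit above need not equal $H_{t_\infty}(0)$ when an orbit point approaches the discontinuity of $f$ from one side (for instance if $\bn\cdot\alpha\in\Z$ for some $\bn\ne0$, or merely because $\bn\cdot\bp_k/q_k$ could approach the jump from the ``wrong'' side). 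This is what the extended operator families of Subsections~4.3--4.4 are for: an auxiliary Cantor-type parametrization of $\mathcal A$ (and of the phase), on which the potential depends continuously, so that the periodic approximants, the target operator, and its one-sided variants all sit inside a single compact family where the limiting argument runs. Carrying out these two reductions, and verifying that the Bloch solutions used above can be chosen with uniform control inside this framework, is the main technical work.
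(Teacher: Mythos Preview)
Your overall strategy (rational approximation plus a winding-number argument, then a compactness limit) is exactly the paper's. The genuine gap is in the periodic step: the claim \eqref{eq_periodic_fill} is \emph{false} as stated, and Proposition~\ref{prop_winding_number} does not justify it. That proposition applies to a family indexed by a full circle, but $t\in\mathcal A$ is an arc; varying only $t$ with the phase $x=0$ fixed does not produce a closed loop in $U(N)$ with nontrivial winding. A concrete counterexample: take $d=1$, $f(x)=C\{x\}$ with $C$ large, $\alpha=1/3$, and $\mathcal A=[C,0]$ through $\infty$ (the arc giving $\tilde f$ winding number one). For $E=C/2$ the period-$3$ discriminant equals $(t-C/2)(C^2/36+1)$, so $E\in\sigma(H_t^{(k)}(0))$ forces $|t-C/2|\le 2/(C^2/36+1)$, i.e.\ $t$ lies in a tiny interval around $C/2\notin\mathcal A$; and $E=C/2$ is not a Dirichlet eigenvalue either, so $t=\infty$ does not help. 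Thus $C/2\notin\bigcup_{t\in\mathcal A}\bar\sigma(H_t^{(k)}(0))$.

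The paper resolves this by enlarging the periodic claim, not shrinking it. In Theorem~\ref{th_gap_filling_rational} the union is taken over the whole circle $\mathcal S$, which parametrizes \emph{both} the phase $x$ and the gap parameter $t$: as $\theta$ traverses $\mathcal S$, every diagonal entry of the finite-volume Cayley transform $U_{h(\theta)}$ traces out the full circle map $\tilde f$, so the determinant winds $q\cdot\deg\tilde f\neq 0$ times and Proposition~\ref{prop_winding_number} applies. One then passes to irrational $\alpha$ via continuity of $\bigcup_{\theta\in\mathcal S}\bar\sigma(H(\alpha,\theta))$ (Theorem~\ref{th_continuity_of_spectra}(3)), obtaining $\bigcup_{\theta\in\mathcal S}\bar\sigma(H(\alpha,\theta))=\rbar$. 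The reduction from $\mathcal S$ to $\mathcal A$ happens only \emph{after} the limit, using minimality: for irrational $\alpha$ every $\theta\in\mathcal C_\alpha$ gives the same spectrum $\Sigma(\alpha)$, and every $\theta\in\mathcal S_{\alpha,t}$ gives an operator unitarily equivalent to $H_t(0)$ with $\Sigma(\alpha,t)\supset\Sigma(\alpha)$, so the union over $\mathcal S$ collapses to $\bigcup_{t\in\mathcal A}\bar\sigma(H_t(0))$. In short, you are trying to restrict to $\mathcal A$ one step too early; the phase variation is essential in the periodic case and only becomes redundant once the dynamics is minimal.
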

Note that, for every $f$ with a simple discontinuity, at least one choice of the arc $\mathcal A$ will produce $\tilde f$ not homotopic to a constant. As described in the previous section, \eqref{eq_gap_filling_main} holds for any family of rank one perturbations with $\mathcal A$ replaced by $\rbar$ (\cite[Theorem 11.8]{Simon}).

The following can also be obtained as a corollary of Proposition \ref{prop_monotone_localization}, assuming that, in addition to the above, the operator is in the complete spectral localization regime.
\begin{cor}
\label{cor_all_eigenvalues}
Suppose that $f$ and $\mathcal A$ satisfy the assumptions of Theorem $\ref{th_gap_filling}$ and Proposition $\ref{prop_monotone_localization}$. Suppose also that $L(E)>\beta(\alpha)$ for all $E\in \R$. Then, for every $E\in \rbar$, there exists $x\in [0,1)$ and $t\in \mathcal A$ such that $E$ is an eigenvalue of $H_t(x)$.
\end{cor}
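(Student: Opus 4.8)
The plan is to split according to the position of $E\in\rbar$ relative to $\Sigma_f$, which by Corollary~\ref{cor_cantor} is the common essential spectrum of all the operators $H_t(x)$. The two boundary cases are immediate. Since $\tilde f$ has nonzero degree it maps onto $\rbar$, so $f([0,1))\cup\mathcal A=\rbar$; and if $f$ takes the value $\infty$ (i.e.\ $-\infty$) anywhere on $[0,1)$, then by $\gamma$-monotonicity $f(0)=-\infty$, an endpoint of $\mathcal A$, so in either case $\infty\in\mathcal A$. Thus for $E=\infty$ one takes $t=\infty$, $x=0$: $H_\infty(0)$ has infinite coupling at the origin, and in the conventions of Section~3 the line $\C e_0$ is its eigenspace at the eigenvalue $\infty$. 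For $E\in\R\setminus\Sigma_f$, Theorem~\ref{th_gap_filling} gives $t_0\in\mathcal A$ with $E\in\bar\sigma(H_{t_0}(0))$; since $H_{t_0}(0)$ differs from $H(0)$ only at the site $0$, its essential spectrum equals $\Sigma_f$ by Lemma~\ref{lemma_sigma_indepence}(4), so $E$ lies in the discrete spectrum and is a genuine eigenvalue. Take $x=0$, $t=t_0$.

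The substantial case is $E\in\Sigma_f$. Fix a generic $x_0$, so that $H_t(x_0)=H(x_0)$ for every $t$ and, by Lemma~\ref{lemma_sigma_indepence} together with Proposition~\ref{prop_monotone_localization} (applicable since $L>\beta(\alpha)$ on all of $\R$), $\sigma(H(x_0))=\Sigma_f\ni E$ is pure point with exponentially localized eigenfunctions. If $E$ is itself an eigenvalue of $H(x_0)$ we are done; otherwise $E=\lim_j E_j$ for distinct eigenvalues $H(x_0)\varphi_j=E_j\varphi_j$. I would then normalize in the sup-norm, $\|\varphi_j\|_{\ell^\infty}=|\varphi_j(n_j)|=1$, and translate: $\psi_j:=\varphi_j(\cdot+n_j)$ solves $H(x_0+n_j\alpha)\psi_j=E_j\psi_j$ with $|\psi_j(0)|=1$. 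The equation at the site $0$ forces $|f(x_0+n_j\alpha)|\le|E_j|+2$, so these potential values stay bounded. Passing to subsequences, $x_0+n_j\alpha\to x_\infty\in[0,1)$ and $\psi_j\to\psi_\infty$ pointwise on $\Z$ by a diagonal argument, with $|\psi_\infty(0)|=1$, hence $\psi_\infty\neq0$; and, after a further subsequence fixing the side of approach at the unique index $n^*$ (if any) with $x_\infty+n^*\alpha\in\Z$, Proposition~\ref{prop_strong_convergence} and Corollary~\ref{cor_compact} give $H(x_0+n_j\alpha)\to H_\tau(x_\infty)$ in the $\rbar$-strong topology, the limiting potential at $n^*$ being $f(0)$ or $f(1-0)$ — an endpoint of $\mathcal A$, so $\tau\in\mathcal A$; if there is no such $n^*$ the limit is $H(x_\infty)=H_{f(0)}(x_\infty)$.

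Finally I would pass to the limit in the three-term eigenvalue equation to get $H_\tau(x_\infty)\psi_\infty=E\psi_\infty$ (at $n^*$, if the limiting potential there is infinite, one first notes $\psi_j(n^*)\to0$, which is exactly the Dirichlet condition). The solution $\psi_\infty$ is bounded, hence polynomially bounded, and $H_\tau(x_\infty)$ agrees with the family member $H(x_\infty)$ away from the single site $n^*$, so on each half-line $\psi_\infty$ satisfies the same recurrence as a polynomially bounded solution of $H(x_\infty)$ at energy $E$; since $L(E)>\beta(\alpha)$, Proposition~\ref{prop_monotone_localization} forces exponential decay, so $\psi_\infty\in\ell^2\setminus\{0\}$ and $E$ is an eigenvalue of $H_\tau(x_\infty)$ with $\tau\in\mathcal A$, $x_\infty\in[0,1)$. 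I expect the delicate step to be precisely this limiting procedure, which is why the $\rbar$-strong compactness of Section~3 is essential: one must keep the prospective eigenfunction from collapsing to zero (handled by the sup-norm normalization and the bound $|f(x_0+n_j\alpha)|\le|E_j|+2$), check that the discontinuity of $f$ resolves in the limit into one of its one-sided values — that is, into an endpoint of $\mathcal A$, so that the limit operator still belongs to $\{H_t(x)\}_{t\in\mathcal A}$ — and verify that a possible infinite coupling in the limit operator is compatible with $\psi_\infty(0)\neq0$, which holds since then necessarily $n^*\neq0$.
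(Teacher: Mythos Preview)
Your approach differs from the paper's. The paper argues uniformly: by the rational-approximation diagonal process inside the proof of Theorem~\ref{th_continuity_of_spectra}, every $E\in\R$ admits some $\theta\in\mathcal S$ and a bounded solution of $H(\theta)\psi=E\psi$; Proposition~\ref{prop_monotone_localization} is then applied to whichever $\gamma$-monotone sampling function ($f$, $f(\cdot-0)$, or $f_t$) corresponds to that $\theta$. You instead split by the position of $E$ and, for $E\in\Sigma_f$, manufacture the bounded solution from the eigenfunctions of $H(x_0)$ at the given $\alpha$ via translations and a pointwise limit, without any rational approximation. This is a legitimate and more elementary route for the main case, at the price of handling $E=\infty$ and $E\notin\Sigma_f$ separately.

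There is one genuine gap in your last step. You invoke Proposition~\ref{prop_monotone_localization} by saying that ``on each half-line $\psi_\infty$ satisfies the same recurrence as a polynomially bounded solution of $H(x_\infty)$''; but the proposition is about \emph{full-line} solutions. A solution bounded on $\{n>n^*\}$ extends to a full-line solution of $H(x_\infty)$ that will in general grow on $\{n<n^*\}$ (the extension uses the value $f(0)$ at $n^*$, not $\tau$, so it does not match $\psi_\infty$ there), and Proposition~\ref{prop_monotone_localization} gives no information about such solutions. The fix is immediate and is precisely what the paper does: you already established $\tau\in\{f(0),f(1-0)\}$, so $f_\tau$ is itself $\gamma$-monotone (on $[0,1)$ for $\tau=f(0)$, on $(0,1]$ for $\tau=f(1-0)$; see Remark~\ref{rem_localizations}(4)) with the same Lyapunov exponent. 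Apply Proposition~\ref{prop_monotone_localization} directly to the full-line equation $H_\tau(x_\infty)\psi_\infty=E\psi_\infty$ to obtain exponential decay of $\psi_\infty$.
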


The proof of Theorem \ref{th_gap_filling} is based on the fact that a version of gap filling holds for rational approximations of the operator. In the process, we use a version of continuity of the spectra under rational approximations, extending the known results of \cite{avron}. Instead of considering separate cases as in \cite{K}, we use a systematic construction of a compactification of the hull that involves operators with infinite potentials. The fact that gap filling holds for rational frequencies uses a ``topological'' argument: if $u\colon \T\to U(n)$ is a continuous family of unitary matrices that is not homotopic to a constant, then the union of their spectra must be equal to $\T$. As a consequence, one also has the following ``topological'' version of the result of \cite{K} for Maryland-type potentials, which also works in every dimension.
\begin{thm}
\label{th_gap_filling_2}
Suppose that $f\colon \R\to [-\infty,+\infty)$ is $1$-periodic, and the circle map associated to $f|_{[0,1)}$ is continuous and not homotopic to a constant.  Define $H(x)$ as in \eqref{eq_h_def_qp_zd}. Then $\sigma(H(x))=\R$.
\end{thm}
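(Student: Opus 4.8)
The plan is to run the rational-approximation-plus-topology argument that underlies Theorem~\ref{th_gap_filling}; it simplifies here because the hypothesis that the circle map attached to $f$ is continuous removes the discontinuity obstruction from the introduction, leaving only the unbounded-potential issue for which Section~3 is built. Choose rational approximants $\beta_k$ (in $\Q$, resp. $\Q^d$) with common denominator $q_k\to\infty$ and $\beta_k\to\alpha$. Since $\sigma(H(x))\subset\R$ is automatic, it suffices to prove $\R\subset\sigma(H(x))$. I would first record that $\bar\sigma(H(x))$ is $x$-independent: $x\mapsto H(x)$ is $\rbar$-strongly continuous because the potential at a site, $f(x+\bn\cdot\alpha)$, depends continuously on $x\in\T$ (here $f$ being a continuous circle map is used); so, by minimality of the rotation, for any $x,y$ one has $\bn_j$ with $x+\bn_j\cdot\alpha\to y$, hence $H(x+\bn_j\cdot\alpha)\to H(y)$ $\rbar$-strongly, and since $H(x+\bn_j\cdot\alpha)$ is unitarily equivalent to $H(x)$ by a shift, Corollary~\ref{cor_semicontinuity} gives $\bar\sigma(H(y))\subset\bar\sigma(H(x))$; symmetry forces equality. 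It therefore suffices, for each $E_0\in\R$, to exhibit \emph{some} $x$ with $E_0\in\sigma(H(x))$.

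The core step is gap filling for a fixed rational $\beta=p/q$. Then $H_\beta(x)$ is $q$-periodic, so Bloch--Floquet theory gives $\bar\sigma(H_\beta(x))\supset\bar\sigma(M_\beta(x))$, where $M_\beta(x)$ is the $q\times q$ matrix obtained by restricting $H_\beta(x)$ to one period with periodic boundary conditions: its only $x$-dependence is through the diagonal entries $f(x+np/q)$, and, allowing some of these to equal $\infty$, it depends continuously on $x\in\T$ in the $\rbar$-strong sense of Section~3. Let $U_\beta(x):=U_{M_\beta(x)}\in U(q)$ be the generalized Cayley transform. I claim $x\mapsto U_\beta(x)$ is not null-homotopic in $U(q)$. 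To see it, deform the off-diagonal entries of $M_\beta(x)$ continuously to $0$; by Section~3 this is a homotopy of continuous loops in $U(q)$, so $\deg(\det\circ U_\beta)$ is unchanged, and at the end $M_\beta(x)$ is diagonal and $\det U_\beta(x)=\prod_{n=0}^{q-1}i\,\frac{f(x+np/q)-i}{f(x+np/q)+i}$. Each factor is the composition of a degree-$(\deg f)$ circle map $\T\to\rbar$ with the degree-$(\pm1)$ Cayley homeomorphism $\rbar\to\T$, so $\deg(\det\circ U_\beta)=\pm q\deg f\neq 0$ since $\deg f\neq0$ (the hypothesis that $f$ is not homotopic to a constant). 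Proposition~\ref{prop_winding_number} gives $\bigcup_{x\in\T}\spec U_\beta(x)=\T$, and transporting this through the Cayley correspondence of Section~3 yields $\bigcup_{x\in[0,1)}\bar\sigma(M_\beta(x))=\rbar$.

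To conclude, fix $E_0\in\R$. For each $k$ pick $x_k$ with $E_0\in\spec M_{\beta_k}(x_k)$; the associated Bloch wave is a periodic real solution $\psi$ of $H_{\beta_k}(x_k)\psi=E_0\psi$ on $\Z^d$, and after translating the lattice so that $|\psi|$ is maximal at the origin and normalizing we get $\psi_k\colon\Z^d\to[-1,1]$ with $\psi_k(0)=1$ and $H_{\beta_k}(\tilde x_k)\psi_k=E_0\psi_k$ for some $\tilde x_k\in[0,1)$. Passing to a subsequence using compactness of $[0,1)$ and of $[-1,1]^{\Z^d}$, we obtain $\tilde x_k\to x_\infty$, $\psi_k\to\psi_\infty$ pointwise with $\psi_\infty(0)=1$, and $H_{\beta_k}(\tilde x_k)\to H(x_\infty)$ $\rbar$-strongly (again using continuity of $f$, since $\tilde x_k+\bn\cdot\beta_k\to x_\infty+\bn\cdot\alpha$). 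Proposition~\ref{prop_limit_spectrum} then gives $E_0\in\bar\sigma(H(x_\infty))$, hence $E_0\in\sigma(H(x_\infty))$, and $x$-independence of $\bar\sigma$ promotes this to $E_0\in\sigma(H(x))$ for all $x$. As $E_0$ was arbitrary, $\sigma(H(x))=\R$. In the $\Z^d$ case the $q\times q$ matrices become $q^d\times q^d$ and the deformation gives $\deg(\det\circ U_\beta)=\pm q^d\deg f\neq0$, so nothing changes.

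The step I expect to be the main obstacle is the non-triviality of the loop $x\mapsto U_\beta(x)$, i.e., converting "$f$ not homotopic to a constant" into "$\det\circ U_\beta$ has nonzero winding". The delicate feature is that $f$ takes the value $\infty$, so $M_\beta(x)$ genuinely degenerates --- a site is deleted and infinite coupling appears --- at finitely many $x\in\T$; one must know that $x\mapsto U_\beta(x)$ is still continuous there and that contracting the off-diagonal couplings is a legitimate homotopy of loops in $U(q)$, and this is precisely what the infinite-coupling formalism of Section~3 (the $\rbar$-strong topology and the Cayley transform $U_H$) is set up to guarantee. A secondary point, the $x$-independence of $\bar\sigma(H(x))$ in the presence of infinite values, likewise reduces to Proposition~\ref{prop_strong_convergence} and Corollary~\ref{cor_semicontinuity}.
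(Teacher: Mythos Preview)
Your proposal is correct and follows essentially the same route as the paper: gap filling for rational frequencies via the winding-number argument (deform to diagonal, compute $\deg(\det\circ U_\beta)$ as a nonzero multiple of $\deg f$, apply Proposition~\ref{prop_winding_number}), followed by passage to irrational $\alpha$ using the $\rbar$-strong machinery of Section~3. The only cosmetic difference is that the paper bundles the limit step into Theorem~\ref{th_continuity_of_spectra}(3), whereas you invoke Proposition~\ref{prop_limit_spectrum} directly; the content is the same.
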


\subsection{Structure of the hull}
The proof of Theorem \ref{th_gap_filling} will be based on rational approximations, which requires some kind of continuity of spectra. The usual arguments of \cite{avron} require compactness of the parameter space, which is not present in the case of potentials with simple discontinuity, for two reasons: unboundedness of the potentials and the presence of discontinuity. The constructions of Section 3 allow to consider unbounded potentials, since $\mathcal U_H$ is compact. However, the presence of discontinuity still requires to extend the parameter space: for example, $H(\bn\cdot\alpha-0)$ does not necessarily belong to the original operator family. Finally, the role of the function $\tilde f$ in Theorem \ref{th_gap_filling} suggests that the extended operator family (that is, the one that includes $H_t(0)$ and all it's translations) can also play a role in the proof.

In this section, we will construct a family of operators that will extend the original family $\{H(x)\colon x\in [0,1)\}$ and serve the purposes described above. It will be convenient to construct an auxiliary Cantor set whose gaps are labeled by $\{\bn\cdot\alpha\}$, $\bn\in \Z^d$. Since we will be dealing only with aspects of topology, the exact construction of such a set would not matter (but some maps will be needed to be fixed in a way continuous with respect to $\alpha$). 

Let $h\colon [0,1]\to [0,2]$ be defined as follows:
$$
h(y):=y+3^{-d}\sum_{\bn\in \Z^d\colon \{\bn\cdot\alpha\}\le y}2^{-|n_1|-|n_2|-\ldots-|n_d|},\quad\text{where}\quad \bn=(n_1,\ldots,n_d).
$$
It will be convenient to consider the range $[0,2]$ of $h$ as a subset of the circle that is identified with, say, $[0,3)$. Denote that circle by $\mathcal S$, so that $h\colon [0,1]\to \mathcal S$. Let also $\mathcal C_{\alpha}$ be the closure of the range of $h$ in $\mathcal S$. We will fix the orientation on $\mathcal S$ so that $h\colon [0,1]\to [0,2]$ is monotone (here, $[0,1]$ and $[0,2]$ are considered as arcs of $\mathcal S$).

The set $\mathcal C_{\alpha}$ is a Cantor set if and only if at least one component of $\alpha$ is irrational.
In all cases, the gaps of $\mathcal C_{\alpha}$ (that is, connected components of $\mathcal S\setminus \mathcal C_{\alpha}$) are in one-to-one correspondence with the numbers $\{\bn\cdot\alpha\}$. Let $\mathcal G_{\bn}$ be the gap associated to $\bn\in \Z$. Note that, if the components of $\alpha$ are rationally independent, different values of $\bn$ may correspond to the same value of $\{\bn\cdot\alpha\}$ and therefore to the same gap.
\begin{lem}
\label{lemma_parameter_space}
Suppose that $f$ has a simple discontinuity. Define $H(x)$ as in $\eqref{eq_h_def_qp_zd}$. Then the closure of the set
\bee
\label{eq_h_family}
\{H(x)\colon x\in [0,1)\}
\ene
in the $\rbar$-strong topology is homeomorphic to $\mathcal C_{\alpha}$. The homeomorphism can be defined on the dense subset
\bee
\label{eq_h_dense_family}
\{H(x)\colon x\in [0,1)\setminus(\Z^d\cdot\alpha)\}
\ene
by the map $H(x)\mapsto h(x)$, and is extended to the closure by continuity.
\end{lem}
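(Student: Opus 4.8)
The plan is to build the homeomorphism explicitly on the dense set \eqref{eq_h_dense_family} via $H(x)\mapsto h(x)$, check it is well-defined and injective there, and then argue that it extends continuously to a homeomorphism of the closures. First I would record the basic structure: since $\alpha$ has an irrational component (otherwise $\mathcal C_\alpha$ is a finite union of arcs and the statement is a triviality to state separately), the orbit $\{x+\bn\cdot\alpha\}$, $\bn\in\Z^d$, is dense in $[0,1)$, so \eqref{eq_h_dense_family} is dense in \eqref{eq_h_family} in the $\rbar$-strong topology (pointwise convergence of potentials), using that $f$ is continuous off the single discontinuity at $0$ together with Proposition \ref{prop_strong_convergence}. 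The map $h$ is strictly increasing on $[0,1]$ with jumps exactly at the points $\{\bn\cdot\alpha\}$, the jump at $\{\bn\cdot\alpha\}$ having size $3^{-d}2^{-|\bn|_1}$ (summed over all $\bn$ giving that value); the complement $\mathcal S\setminus\mathcal C_\alpha$ is the disjoint union of the open gaps $\mathcal G_{\bn}$, and $h$ restricted to $[0,1]\setminus(\Z^d\cdot\alpha)$ is a continuous strictly increasing bijection onto $\mathcal C_\alpha\setminus\bigcup_{\bn}\overline{\mathcal G_{\bn}}$, i.e.\ onto the dense set of ``non-gap-endpoint'' points of $\mathcal C_\alpha$. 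This is the combinatorial backbone and requires only bookkeeping.

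The heart is the two-sided continuity argument on the closures. For continuity of $H(x)\mapsto h(x)$: if $x_j\to x$ with all $x_j,x\notin\Z^d\cdot\alpha$ and $H(x_j)\to H$ $\rbar$-strongly, then $V_{x_j}(\bn)=f(x_j+\bn\cdot\alpha)\to f(x+\bn\cdot\alpha)$ for every $\bn$ (continuity of $f$ at each orbit point, since $x+\bn\cdot\alpha\notin\Z$), so $H=H(x)$; and $h(x_j)\to h(x)$ because $h$ is continuous on $[0,1]\setminus(\Z^d\cdot\alpha)$. Conversely, one must check that if $h(x_j)$ converges in $\mathcal C_\alpha$ then $H(x_j)$ converges $\rbar$-strongly, and that limits match up. Here is where the gaps of $\mathcal C_\alpha$ do their job: a sequence $h(x_j)$ can converge to a gap endpoint of $\mathcal G_{\bn_0}$, and there are two such endpoints corresponding to $x_j\uparrow \{\bn_0\cdot\alpha\}$ versus $x_j\downarrow\{\bn_0\cdot\alpha\}$ within the relevant orbit; these two approaches produce the two different operators $H(\{\bn_0\cdot\alpha\}-0)$ and $H(\{\bn_0\cdot\alpha\}+0)$, whose potentials differ only in the single entry indexed by $-\bn_0$ (where $f(0\pm0)$ disagree). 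Thus the closure of \eqref{eq_h_family} naturally acquires, for each gap endpoint of $\mathcal C_\alpha$, exactly one operator, with no collisions: distinct points of $\mathcal C_\alpha$ correspond to distinct potentials, because $h$ separates orbit points and the jump data encodes on which side of each $\{\bn\cdot\alpha\}$ the limit sits. One then defines the extension $\Phi\colon\overline{\{H(x)\}}\to\mathcal C_\alpha$ by sending each such limit operator to the corresponding point of $\mathcal C_\alpha$, and verifies $\Phi$ is a continuous bijection.

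Finally, both spaces are compact and Hausdorff: $\mathcal C_\alpha$ is a closed subset of the circle $\mathcal S$, and $\overline{\{H(x)\}}$ is a closed subset of the compact Hausdorff space $\mathcal U_H$ (Corollary \ref{cor_compact}), identifying operators with their Cayley transforms. A continuous bijection between compact Hausdorff spaces is a homeomorphism, so this closes the argument once continuity and bijectivity of $\Phi$ are established. The main obstacle, as I see it, is the careful two-sided matching at the gap endpoints: one must show that along any $\rbar$-strongly convergent subsequence $H(x_j)$, the scalars $h(x_j)$ converge in $\mathcal C_\alpha$ (and to a point determined by the limit operator), which amounts to controlling how orbit points can accumulate at the rotation orbit $\Z^d\cdot\alpha$ and matching the one-sided limits of $f$ at $0$ with the two endpoints of each gap $\mathcal G_{\bn}$ — everything else is routine topology plus the compactness and strong-convergence machinery already set up in Section 3.
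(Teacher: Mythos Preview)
Your proposal is correct and follows essentially the same route as the paper: identify the closure of \eqref{eq_h_family} with the set $\{H(x\pm 0)\colon x\in [0,1)\}$, match the two one-sided limits at each orbit point $\{\bn\cdot\alpha\}$ with the two endpoints of the gap $\mathcal G_{\bn}$, and observe that the resulting convergence structure coincides with the subspace topology on $\mathcal C_\alpha$. The paper's proof is much terser (it simply asserts the characterization of $\rbar$-strong convergence and says ``the latter is equivalent to the topology on $\mathcal C_\alpha$''), while you spell out the two-sided continuity check and close with the compact--Hausdorff bijection argument; this is a welcome expansion but not a different method.
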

\begin{proof}
The closure of \eqref{eq_h_family} can be described by
$$
\{H(x\pm 0)\colon x\in [0,1)\},
$$
where one has $H(x)=H(x+0)$ for all $x$ and $H(x)=H(x-0)$ for $x\in [0,1)\setminus\Z^d\cdot\alpha$. For $s,s_j\in \{+0,-0\}$, it is easy to see that $H(x_j s_j)\to H(xs)$ if and only if $x_j\to xs$ with $x_j\neq x$ for $j$ large enough, or $x_js_j=xs$ for $j$ large enough. The latter is equivalent to the topology on $\mathcal C_{\alpha}$.
\end{proof}

The $\Z^d$-action $x\mapsto x+\bn\cdot\alpha$ can be extended to $\mathcal C_{\alpha}$ by continuity. For $\theta\in \mathcal C_{\alpha}$, we will denote this action in the same way: $\theta\mapsto \theta+\bn\cdot\alpha$. Note that, even though $\mathcal C_{\alpha}$ is a subset of a circle, the action is not a circle rotation (and, in general, cannot be continuously conjugated to one). For $\theta\in \mathcal C_{\alpha}$, we will define the corresponding element of the closure of \eqref{eq_h_dense_family} by $H(\theta)$. 

We are deliberately using the same notation for $H(x)$ and $H(\theta)$, meaning the following: the set $[0,1)$ of values of $x$ is naturally identified with the set $\{H(x+0)\colon x\in [0,1)\}$ which is identified with a subset of $\mathcal C_{\alpha}$ of values of $\theta$.
\begin{prop}
Suppose that $\alpha$ has at least one irrational component. There exists $\Sigma\subset \rbar$ such that $\bar\sigma(H(\theta))=\Sigma$ for all $\theta\in \mathcal C_{\alpha}$.
\end{prop}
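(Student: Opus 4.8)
The claim is that $\theta\mapsto\bar\sigma(H(\theta))$ is constant on $\mathcal C_{\alpha}$, and the plan is the standard minimality argument, built on three ingredients: translation invariance of the generalized spectrum, minimality of the $\Z^d$-action on $\mathcal C_{\alpha}$, and the semicontinuity of Corollary \ref{cor_semicontinuity}. I would begin with translation invariance. For $\bn\in\Z^d$ let $S_{\bn}$ be the shift $(S_{\bn}\psi)(\bm)=\psi(\bm-\bn)$ on $\ell^2(\Z^d)$; it conjugates the operator \eqref{eq_h_def_unbounded} with potential $V$ to the one with potential $V(\,\cdot-\bn)$, and, reading everything through Cayley transforms as in Section 3, it maps $\ell^2(I(V))$ onto $\ell^2(I(V(\,\cdot-\bn)))$ and intertwines $U_{H}$ with $U_{H'}$ for the shifted operator $H'$, so that $\bar\sigma(H)=\bar\sigma(H')$. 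On the dense subset \eqref{eq_h_dense_family} the $\Z^d$-action on $\mathcal C_{\alpha}$ is, by its very definition, $H(\theta+\bn\cdot\alpha)=S_{\bn}H(\theta)S_{\bn}^{-1}$; since conjugation by a fixed unitary is strongly continuous and the action on $\mathcal C_{\alpha}$ was extended by $\rbar$-strong continuity, this identity persists for every $\theta\in\mathcal C_{\alpha}$. Hence $\bar\sigma(H(\theta+\bn\cdot\alpha))=\bar\sigma(H(\theta))$ for all $\theta\in\mathcal C_{\alpha}$ and all $\bn\in\Z^d$.

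Next I would invoke minimality of $(\mathcal C_{\alpha},\Z^d)$. Since at least one component of $\alpha$ is irrational, $\{\bn\cdot\alpha\bmod 1:\bn\in\Z^d\}$ is dense in $[0,1)$, so the rotation action on $\R/\Z$ is minimal; and $(\mathcal C_{\alpha},\Z^d)$ is an almost one-to-one extension of it (collapsing each pair of endpoints of a gap $\mathcal G_{\bn}$ to the point $\{\bn\cdot\alpha\}$ and inverting $h$ elsewhere gives a continuous equivariant surjection $\mathcal C_{\alpha}\to\R/\Z$ that is injective off the countable set $\Z^d\cdot\alpha$). Consequently the action on $\mathcal C_{\alpha}$ is again minimal, i.e.\ the orbit of every $\theta\in\mathcal C_{\alpha}$ is dense in $\mathcal C_{\alpha}$; this can also be checked directly from the formula defining $h$.

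To conclude, fix $\theta_1,\theta_2\in\mathcal C_{\alpha}$. By density of the orbit of $\theta_1$, choose $\bn_j\in\Z^d$ with $\theta_1+\bn_j\cdot\alpha\to\theta_2$ in $\mathcal C_{\alpha}$; by Lemma \ref{lemma_parameter_space} this is the same as $H(\theta_1+\bn_j\cdot\alpha)\to H(\theta_2)$ in the $\rbar$-strong topology. Given $E\in\bar\sigma(H(\theta_2))$, Corollary \ref{cor_semicontinuity} produces $E_j\in\bar\sigma(H(\theta_1+\bn_j\cdot\alpha))=\bar\sigma(H(\theta_1))$ with $E_j\to E$, and since $\bar\sigma(H(\theta_1))$ is closed in $\rbar$ this forces $E\in\bar\sigma(H(\theta_1))$; hence $\bar\sigma(H(\theta_2))\subseteq\bar\sigma(H(\theta_1))$, and exchanging $\theta_1$ and $\theta_2$ yields equality. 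Setting $\Sigma:=\bar\sigma(H(\theta))$ for any $\theta\in\mathcal C_{\alpha}$ finishes the argument. The computations here are routine; the only step requiring genuine care is the minimality claim, i.e.\ that doubling the countable orbit $\Z^d\cdot\alpha$ does not destroy density of orbits — precisely the point where the almost one-to-one structure of $\mathcal C_{\alpha}$ is used — while everything else is bookkeeping with the $\rbar$-strong topology and the compatibility of unitary equivalence and strong limits with the inclusion of $\infty$ into $\bar\sigma$, all of which is already in place from Section 3.
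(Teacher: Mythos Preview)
Your proposal is correct and is precisely the ``standard way'' the paper alludes to: the paper's proof simply says the claim follows from Proposition~\ref{prop_strong_convergence} together with density of trajectories in $\mathcal C_{\alpha}$, applied through the Cayley transforms $U_{H(\theta)}$. You have spelled out exactly those ingredients --- unitary equivalence under shifts, minimality of the $\Z^d$-action on $\mathcal C_{\alpha}$ (via the almost one-to-one factor map to the circle rotation), and the semicontinuity of Corollary~\ref{cor_semicontinuity} --- so there is no difference in approach, only in level of detail.
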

\begin{proof}
Follows in a standard way from Proposition \ref{prop_strong_convergence}, since the trajectories of the action are dense in $\mathcal C_{\alpha}$ (one can apply the usual proof to the family $\{U_{H(\theta)}\colon \theta\in \mathcal C_{\alpha}\}$ in order to avoid dealing with infinite potentials).
\end{proof}
\subsection{Rank one perturbations and the extended operator family}
As in the previous section, suppose that $f\colon [0,1)\to \rbar$ has a simple discontinuity. Assume that $\mathcal A$ is an arc with endpoints $f(0),f(1-0)$. As in the statement of Theorem \ref{th_gap_filling}, let
$$
f_t(x):=\begin{cases}
f(x),&x\in (0,1);\\
t,&x=0,
\end{cases}
$$
for $t\in \mathcal A$. Let also, as before, $H_t(x)$ be the operator \eqref{eq_h_def_qp_zd} with $f$ replaced by $f_t$.

Recall that 
$$
\varphi_{\mathcal A}\colon[-1,0]\to \mathcal A,\quad \varphi_{\mathcal A}(-1)=f(1-0),\quad\varphi_{\mathcal A}(0)=f(0),
$$
defined in \eqref{eq_arc_parametrization}, is a continuous parametrization of $\mathcal A$. Note that the change of the value $f(0)$ by $t$ only affects the operators $H(x)$ with $x=\bn\cdot\alpha\pm 0$ and interpolates between $H(\bn\cdot\alpha-0)$ and $H(\bn\cdot\alpha+0)=H(\bn\cdot\alpha)$. These two operators correspond to the endpoints of the gap $\mathcal G_{\bn}$ of $\mathcal C_{\alpha}$. We will use the points of $\mathcal G_\bn$ to parametrize this interpolation. More precisely, for each $\bn$, let 
$$
\varphi_{\bn}\colon [-1,0]\to \overline{\mathcal G_{\bn}}
$$
be a monotone parametrization of the gap $\mathcal G_{\bn}$ the appropriately rescaled arc length (where the orientation and arc length on $\mathcal S$ are determined from its identification with $[0,3)$ as done in the definition). For $\theta\in \mathcal G_{\bn}$, define 
$$
H(\theta):=H_{\varphi_{\mathcal A}(\varphi_{\bn}^{-1}(\theta))}(\varphi_{\bn}(1)).
$$
More precisely, it is defined through replacing $f(0)$ by $\varphi_{\mathcal A}(\varphi_{\bn}^{-1}(\theta))$ in the expression for the corresponding matrix element of the operator:
$$
H(\theta)=\l(H(\bn\cdot\alpha-0)-f(1-0)\langle e_{-\bn},\cdot\rangle e_{-\bn}\r)+\varphi_{\mathcal A}(\varphi_{\bn}^{-1}(\theta))\langle e_{-\bn},\cdot\rangle e_{-\bn},\quad \theta\in \overline{\mathcal G_{\bn}}.
$$
The result of this construction is a family of operators $\{H(\theta)\colon\theta\in \mathcal S\}$, with the map $\theta\mapsto H(\theta)$ being a homeomorphism between $\mathcal S$ and the above set of operators in the $\rbar$-strong topology. The action of $\Z^d$ naturally extends to $\mathcal S$ from the conjugation action on $H(\theta)$.
\begin{rem}
The family $\{H(\theta)\colon \theta\in \mathcal S\}$ is an extension of the family $\{H(\theta)\colon \theta\in \mathcal C_{\alpha}\}$ which, in turn, is the closure of the family $\{H(x)\colon x\in [0,1)\}$. As mentioned above, the latter inclusion allows to uniquely associate some value $\theta\in \mathcal C_{\alpha}\subset \mathcal S$ to each value $x\in[0,1)$.
\end{rem}
For $t\in \mathcal A$, denote by 
$$
\mathcal S_{\alpha,t}:=\{\varphi_{\bn}(\varphi_{\mathcal A}^{-1}(t))\colon \bn\in \Z^d\}
$$
the set of parameters corresponding to operators with the same choice of $t$. It is easy to see that $\mathcal S_{\alpha,t}$ are invariant with respect to the translation, and
$$
\cup_{t\in \mathcal A} \mathcal S_{\alpha,t}=\cup_{\bn\in\Z^d}\overline{\mathcal G_{\bn}},\quad \mathcal S\setminus\l(\cup_{t\in \mathrm{int}\, \mathcal J} \mathcal S_{\alpha,t}\r)=\mathcal C_{\alpha}.
$$
In other words, $S_{\alpha,t}$ contains one point in each gap $\mathcal G_{\bn}$. If $\alpha$ has at least one irrational component,  these gaps are dense, and we have
$$
\overline{\mathcal S_{\alpha,t}}=\mathcal S_{\alpha,t}\cup \mathcal C_{\alpha}.
$$
\begin{rem}
\label{remark_different_theta}
It is easy to see that, in the case of a simple discontinuity, all operators $H(\theta)$ are different for different $\theta\in \mathcal S$. If one repeats the steps of the proof for the ``trivial'' case of $f$ such as the associated circle map is continuous (as in Theorem \ref{th_gap_filling_2}), the homeomorphism claim in Lemma \ref{lemma_parameter_space} will fail, since the operators at the endpoints of the gaps will actually be equal. The set \eqref{eq_h_family} will be closed, and there will be no need to add new points.
\end{rem}
\subsection{Continuity of spectra}
We will now discuss continuity in $\alpha$. Whenever the dependence on $\alpha$ is important, we will use the notation $H(\alpha,\theta)$.

\begin{lem}
\label{lemma_strong_continuity}
Suppose that $\theta_j\to \theta\in \mathcal S$, and $\alpha_j\to\alpha$, where the components of $\alpha$ are rationally independent. Then $H(\alpha_j,\theta_j)\to H(\alpha,\theta)$ $\rbar$-strongly. If $\theta_j\in \mathcal S_{\alpha_j,t_j}$, then either $\theta\in \mathcal C_{\alpha}$ or $\theta\in \mathcal S_{\alpha,t}$ and $t_j\to t$. If $\theta\in \mathcal C_{\alpha}$, then one can replace $\theta_j$ by (any) closest to it point of $\mathcal C_{\alpha_j}$, thus making $\theta_j\in \mathcal C_{\alpha_j}$.
\end{lem}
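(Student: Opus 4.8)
The plan is to verify the three assertions in turn, the first being the substantive one and the other two being essentially bookkeeping about how the parametrization $\theta\mapsto H(\alpha,\theta)$ was constructed.

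For the $\rbar$-strong convergence, by Proposition \ref{prop_strong_convergence} (and Corollary \ref{cor_compact}) it suffices to check that the potential $V_j(\bn)$ of $H(\alpha_j,\theta_j)$ converges in $\rbar$ to the potential $V(\bn)$ of $H(\alpha,\theta)$ for every fixed $\bn\in\Z^d$. I would split into two cases according to whether $\theta\in\mathcal C_\alpha$ or $\theta$ lies in one of the gaps $\mathcal G_{\bm}$. In the gap case, $\theta$ has a neighborhood contained in $\overline{\mathcal G_{\bm}}$; since the gaps $\mathcal G_{\bm}$ of $\mathcal C_{\alpha_j}$ depend continuously on $\alpha$ in the relevant sense (the function $h$ in the construction before Lemma \ref{lemma_parameter_space} depends continuously on $\alpha$, and for $j$ large the same label $\bm$ names a nearby gap of $\mathcal C_{\alpha_j}$), for large $j$ we have $\theta_j\in\overline{\mathcal G_{\bm}}$ as well; then $H(\alpha_j,\theta_j)$ and $H(\alpha,\theta)$ differ from $H(\alpha_j,\bm\cdot\alpha_j-0)$ and $H(\alpha,\bm\cdot\alpha-0)$ respectively only in the single matrix entry at $e_{-\bm}$, whose value is $\varphi_{\mathcal A}(\varphi_{\bm}^{-1}(\theta_j))\to\varphi_{\mathcal A}(\varphi_{\bm}^{-1}(\theta))$ by continuity of $\varphi_{\mathcal A}$ and of the (rescaled arc length) parametrizations $\varphi_{\bm}$ in $\alpha$, while the remaining entries converge because $f$ is continuous on $[0,1)\setminus\{0\}$ (extending continuously to the endpoints in $\rbar$) and $\bn\cdot\alpha_j\to\bn\cdot\alpha$ stays away from $0\bmod 1$ for $\bn\neq\bm$ once $j$ is large (here rational independence of the components of $\alpha$ is used to guarantee that $\bn\cdot\alpha\notin\Z$ for $\bn\neq\bze$). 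In the case $\theta\in\mathcal C_\alpha$, we first use the last sentence of the lemma: replacing $\theta_j$ by the nearest point of $\mathcal C_{\alpha_j}$ changes the potential negligibly (this is exactly the gap-width going to zero, again from continuity of $h$ in $\alpha$), so we may assume $\theta_j\in\mathcal C_{\alpha_j}$; then $\theta_j$ corresponds to a value $x_j\in[0,1)$ (up to a $\pm0$) with $x_j\to x$ where $\theta\leftrightarrow x$, and $V_j(\bn)=\tilde f(x_j+\bn\cdot\alpha_j)\to\tilde f(x+\bn\cdot\alpha)=V(\bn)$ in $\rbar$ using that $\tilde f$ is a continuous circle map and $x+\bn\cdot\alpha\neq 0\bmod 1$ by rational independence. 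This gives pointwise $\rbar$-convergence of potentials, hence $\rbar$-strong convergence.

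For the dichotomy $\theta\in\mathcal C_\alpha$ versus $\theta\in\mathcal S_{\alpha,t}$ with $t_j\to t$: by definition $\theta_j=\varphi_{\bm_j}(\varphi_{\mathcal A}^{-1}(t_j))$ for some $\bm_j$. If the labels $\bm_j$ stay in a finite set, pass to a subsequence with $\bm_j\equiv\bm$ fixed; then $\theta_j\in\overline{\mathcal G_{\bm}}$ (of $\mathcal C_{\alpha_j}$), and since these gaps converge to $\mathcal G_{\bm}$ of $\mathcal C_\alpha$, the limit $\theta$ lies in $\overline{\mathcal G_{\bm}}$, and $\varphi_{\bm}$ being a homeomorphism onto $\overline{\mathcal G_{\bm}}$ forces $\varphi_{\mathcal A}^{-1}(t_j)$ to converge, i.e. $t_j\to t$ and $\theta=\varphi_{\bm}(\varphi_{\mathcal A}^{-1}(t))\in\mathcal S_{\alpha,t}$. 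If instead $|\bm_j|\to\infty$ along a subsequence, then the gap $\mathcal G_{\bm_j}$ has width $\to 0$ and is pinched against $\mathcal C_\alpha$, so $\theta=\lim\theta_j\in\mathcal C_\alpha$; since any subsequence of $\{\theta_j\}$ has a further subsequence landing in one of these two alternatives, and $\mathcal C_\alpha$ and $\mathcal S_{\alpha,t}$ are disjoint for $t$ in the interior, the full sequence obeys the stated dichotomy. The final sentence of the lemma is just the observation already used above that $\operatorname{dist}(\theta,\mathcal C_{\alpha_j})\to 0$ (as the relevant gap widths go to $0$), so nearest-point replacement is harmless.

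The main obstacle is the uniformity needed in $\alpha$ for the Cantor sets $\mathcal C_{\alpha_j}$: one must be sure that each gap label $\bm$ names a genuine gap of $\mathcal C_{\alpha_j}$ for $j$ large with endpoints depending continuously on $\alpha$, and that gaps with large index $|\bm|$ are uniformly small, so that passing between the $\mathcal S_{\alpha,t}$-picture and the $\mathcal C_\alpha$-picture is continuous. This is where the explicit choice $h(y)=y+3^{-d}\sum_{\bn:\{\bn\cdot\alpha\}\le y}2^{-|\bn|_1}$ pays off: the weights $3^{-d}2^{-|\bn|_1}$ give a summable, $\alpha$-independent bound on gap sizes, and for each fixed $\bn$ the location $\{\bn\cdot\alpha\}$ (hence the placement of $\mathcal G_{\bn}$) is continuous in $\alpha$ as long as $\bn\cdot\alpha\notin\Z$, which holds by rational independence. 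Everything else is a routine continuity argument feeding into Proposition \ref{prop_strong_convergence}.
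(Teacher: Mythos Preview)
Your proposal is correct and takes the same approach as the paper's one-line proof, which simply invokes continuity in $\alpha$ (at rationally independent $\alpha$) of the endpoints of each gap $\mathcal G_{\bn}$ --- precisely the ``main obstacle'' you isolate and justify via the explicit form of $h$. One cosmetic slip: in the Cantor case the potential is $f(x_j+\bn\cdot\alpha_j)$, not $\tilde f$, but since $f$ is continuous off the integers and rational independence keeps $x+\bn\cdot\alpha$ away from $\Z$, the argument is unaffected.
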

\begin{proof}
Follows from the fact that, for each $\bn\in \Z^d$, the endpoints of each gap $G_\bn$ are continuous in $\alpha$ at all $\alpha$ with rationally independent components.
\end{proof}
\begin{rem}
\label{rem_rational_limits}
In the case when $\alpha$ has rationally dependent components, the conclusion of Lemma \ref{lemma_strong_continuity} may not be true, and the set of such limit points is more complicated. In this case, multiple gaps with different labels are combined into one gap. If $\bn\cdot\alpha\in \Z$, then every gap whose label is a multiple of $\bn$ becomes a part of the gap with label $\bze$. If $\alpha_j$ have rationally independent components, one can choose $\theta_j$ between the gaps with labels $\bze$ and $\bn$. Due to compactness, the operators $H(\theta_j)$ must have a limit point. It is easy to see that it does not correspond to any point of $\mathcal S$. For example, if $d=1$ and $\alpha_j\to \alpha=p/q$, the limit point of $H(\alpha_j,\theta_j)$ may not be a $q$-periodic operator, since the choices of the $\pm$ will not necessarily be $q$-periodic.
\end{rem}
We will now summarize some properties of invariance of spectra.
\begin{enumerate}
	\item For $\alpha$ with at least one irrational component, the spectrum of $H(\theta)$ is constant in $\theta\in \mathcal C_{\alpha}$. We will denote it by $\Sigma(\alpha)$.
\item For all $\alpha$, the spectrum $H(\theta)$ is constant in $\theta\in \mathcal S_{\alpha,t}$ (since the corresponding operators are obtained from one another by translation and therefore are unitarily equivalent). We will use the notation $\Sigma(\alpha,t):=\sigma(H(\theta))$ for $\theta\in \mathcal S_{\alpha,t}$. Since $H(\theta)$ is a rank one perturbation of $H(\theta')$ with some $\theta'\in \mathcal C_{\alpha}$ and the spectrum of the latter has no isolated points, we have that $\Sigma(\alpha,t)$ contains $\Sigma(\alpha)$ and at most one eigenvalue in each gap of $\Sigma (\alpha)$. The notation $\Sigma(\alpha,t)$ is used in order to reflect the fact that, in general, these eigenvalues will not be constant in $t$.
\item For $\alpha$ with rationally dependent components, the spectrum may not be constant in $\theta$. We will denote it by $\Sigma(\alpha,\theta)$.
\end{enumerate}

With all the preparations, the proof of the following continuity result can now be performed in the same language as in \cite{avron}.
\begin{thm}
\label{th_continuity_of_spectra}
Suppose that the components of $\alpha$ are rationally independent, and let $\alpha_j\to \alpha$ be a sequence of vectors with all rational components. Then the following holds:
\begin{enumerate}
	\item $E\in \Sigma(\alpha)$ if and only if there exists $E_j\in \Sigma(\alpha_j,\theta_j)$ with $\theta_j\in \mathcal C_{\alpha_j}$, $E_j\to E$.
	\item $E\in \Sigma(\alpha,t)$ if and only if there exists $E_j\in \Sigma(\alpha_j,t_j)$ with $t_j\to t$ and $E_j \to E$.
	\item As a consequence, the union spectra $\cup_{\theta\in \mathcal C_{\alpha}}\bar\sigma(H(\alpha,\theta))$ and $\cup_{\theta\in \mathcal S}\bar\sigma(H(\alpha,\theta))$ are continuous in $\alpha$ at all rationally independent points, with respect to the Hausdorff distance topology.
\end{enumerate}
\begin{proof}
As in \cite{avron}, the ``only if'' part in (1) and (2) is standard. In both cases, Lemma \ref{lemma_strong_continuity} allows to find a sequence of operators with rational $\alpha_j$ converging to $H(\alpha,\theta)$ $\rbar$-strongly, and the implication follows from Corollary \ref{cor_semicontinuity}. 

For the ``if'' part in (1), note that, since $f$ cannot be identically equal to $\infty$, for large enough $j$ one will be able to find a sequence of bounded solutions of the eigenvalue equation
\bee
\label{eq_eigenvalue_equation}
H(\alpha_j,\theta_j')\psi=E\psi,\quad \|\psi\|_{\ell^{\infty}(\Z^d)}\le 1, \quad \psi(0)\ge 1/2,
\ene
where $\theta_j'\in \mathcal C_{\alpha_j}$ is a translation of $\theta_j$. By the diagonal process, one can assume that $\theta_j'\to\theta\in \mathcal C_{\alpha}$ and $\psi_j\to \psi$ pointwise, with the same properties. From Proposition \ref{prop_limit_spectrum}, this implies $E\in \bar\sigma(H(\theta))$.

A similar argument works for (2), where one can find a converging sequence $\theta_j'\in \mathcal S_{\alpha_j,t_j}$ and a sequence $\psi_j$ satisfying \eqref{eq_eigenvalue_equation}. As in Lemma \ref{lemma_strong_continuity}, the limit of $\theta_j'$ will be either from $\mathcal S_{\alpha,t}$ or from $\mathcal C_{\alpha}$, which in both cases implies $E\in \Sigma(\alpha,t)$.

The proof of (3) goes along the same lines as (1), (2), as well as in \cite{avron}. If $E\in \cup_{\theta\in \mathcal C_{\alpha}}\bar\sigma(H(\alpha,\theta))$, one can find a sequence of rational approximants to $\alpha$ and conclude, by a similar diagonal process, that there exists $\theta\in \mathcal C_{\alpha}$ and a bounded non-trivial solution of the eigenvalue equation $H(\theta)\psi=E\psi$. Then, continuity follows from another application of the same diagonal process. The same holds for $\mathcal C_{\alpha}$ replaced by $\mathcal S$.
\end{proof}
\begin{rem}
\label{rem_rational_fail}
For $\alpha$ with rationally dependent components, the argument in (3) above appears to fail by the same reason as in Remark \ref{rem_rational_limits}.
\end{rem}
\end{thm}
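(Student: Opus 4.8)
The plan is to run the approximation scheme of \cite{avron}, with the machinery of Section~3 (operators with infinite potentials) and of the preceding subsections (the compactified hull $\mathcal S$ and the extended family $\{H(\theta)\colon\theta\in\mathcal S\}$) playing the role of the missing compactness of the parameter space. Parts (1) and (2) are proved in parallel, each splitting into an ``only if'' half (semicontinuity: the rational spectra cannot miss a point of the limit spectrum) and an ``if'' half (the limit spectrum cannot miss a point approximated by the rational spectra); part (3) is then a formal consequence, obtained by diagonalising over a single fixed sequence of rational approximants.

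For the ``only if'' halves I would argue as follows. Given $E\in\Sigma(\alpha)$, fix $\theta\in\mathcal C_\alpha$ with $E\in\bar\sigma(H(\alpha,\theta))$; for (2), fix instead $\theta\in\mathcal S_{\alpha,t}$. Using that the gaps $\mathcal G_{\bn}$ are dense (equivalently, $\{\bn\cdot\alpha\}$ is dense) together with Lemma \ref{lemma_strong_continuity}, choose $\theta_j\in\mathcal C_{\alpha_j}$ (resp.\ $\theta_j\in\mathcal S_{\alpha_j,t_j}$ with $t_j\to t$) so that $H(\alpha_j,\theta_j)\to H(\alpha,\theta)$ in the $\rbar$-strong topology. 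Corollary \ref{cor_semicontinuity} then supplies $E_j\in\bar\sigma(H(\alpha_j,\theta_j))$ with $E_j\to E$ in $\rbar$, and since $\theta_j$ lies in the appropriate invariant set this is exactly $E_j\in\Sigma(\alpha_j,\theta_j)$, resp.\ $E_j\in\Sigma(\alpha_j,t_j)$. This half uses nothing beyond the $\rbar$-strong toolkit of Section~3.

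The ``if'' halves carry the actual content, and I expect this to be the main obstacle. Suppose $\alpha_j$ is rational, $E_j\to E$, and $E_j\in\Sigma(\alpha_j,\theta_j)$ with $\theta_j\in\mathcal C_{\alpha_j}$ (for (1)) or $\theta_j\in\mathcal S_{\alpha_j,t_j}$ with $t_j\to t$ (for (2)). Since $\alpha_j$ is rational, $H(\alpha_j,\theta_j)$ is periodic when $\theta_j\in\mathcal C_{\alpha_j}$ and a rank-one perturbation of a periodic operator when $\theta_j\in\mathcal S_{\alpha_j,t_j}$; in either case $E_j$ in the spectrum yields a bounded solution $\psi_j$ of the eigenvalue equation (a Bloch wave, or an $\ell^2$ eigenfunction if $E_j$ falls in a gap of the periodic part), using that $f$ is not identically $+\infty$, so the underlying periodic operator is not purely of Dirichlet type. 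Normalizing $\|\psi_j\|_{\ell^\infty(\Z^d)}=1$ and then translating — legitimate because the $\Z^d$-action is by unitary conjugation and preserves both $\mathcal C_{\alpha_j}$ and $\mathcal S_{\alpha_j,t_j}$ — I may assume $\psi_j(0)\ge 1/2$ for the translated parameter $\theta_j'$. Passing to a subsequence, $\psi_j\to\psi$ pointwise with the same bounds and, by Lemma \ref{lemma_strong_continuity}, $\theta_j'\to\theta$ with $\theta\in\mathcal C_\alpha$ (case (1)) or $\theta\in\mathcal S_{\alpha,t}\cup\mathcal C_\alpha$ (case (2)), so $H(\alpha_j,\theta_j')\to H(\alpha,\theta)$ $\rbar$-strongly. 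Proposition \ref{prop_limit_spectrum} then gives $E\in\bar\sigma(H(\alpha,\theta))$ — the possibility $E=\infty$ is built into that proposition — which means $E\in\Sigma(\alpha)$, resp.\ $E\in\Sigma(\alpha,t)$ (using $\Sigma(\alpha)\subset\Sigma(\alpha,t)$ when $\theta\in\mathcal C_\alpha$). The two genuinely delicate points are: the normalization $\psi_j(0)\ge 1/2$ must be arranged \emph{before} translating so the bounded solution does not degenerate in the pointwise limit; and the extracted parameter limit must be controlled so it stays inside the correct invariant set — this is precisely the content of Lemma \ref{lemma_strong_continuity}, which is exactly what fails when the components of $\alpha$ are rationally dependent (Remark \ref{rem_rational_limits}), hence the standing rational-independence hypothesis.

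Finally, for part (3) I would combine (1) and (2) with a diagonal process over a fixed sequence of rational approximants $\alpha_j\to\alpha$, as in \cite{avron}: if $E\in\cup_{\theta\in\mathcal C_\alpha}\bar\sigma(H(\alpha,\theta))$ then, running the ``if'' argument, one produces some $\theta\in\mathcal C_\alpha$ and a bounded nontrivial solution of $H(\alpha,\theta)\psi=E\psi$, and feeding this back through the same approximation-and-diagonalization scheme yields Hausdorff-continuity of $\cup_{\theta\in\mathcal C_\alpha}\bar\sigma(H(\alpha,\theta))$ at $\alpha$; the identical argument applies with $\mathcal C_\alpha$ replaced by $\mathcal S$. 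As noted, nothing here survives at rationally dependent $\alpha$, since Lemma \ref{lemma_strong_continuity} — continuity of the gap endpoints in $\alpha$ — breaks exactly when distinct gap labels merge.
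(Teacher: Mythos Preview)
Your proposal is correct and follows essentially the same approach as the paper's proof: the ``only if'' halves via Lemma~\ref{lemma_strong_continuity} and Corollary~\ref{cor_semicontinuity}, the ``if'' halves via bounded (Bloch or $\ell^2$) solutions at the rational level, translation to pin $\psi_j(0)\ge 1/2$, a diagonal extraction, and Proposition~\ref{prop_limit_spectrum}; part~(3) by a second diagonal pass. Your write-up is in places more explicit than the paper's (e.g.\ the source of the bounded solutions and the dichotomy $\theta\in\mathcal S_{\alpha,t}\cup\mathcal C_\alpha$ in~(2)), and the only quibble is the phrasing ``arranged \emph{before} translating'' --- the normalization at the origin is achieved \emph{by} translating, before passing to the limit.
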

\subsection{The case of rational $\alpha$}For rational $\alpha$ (that is, $\alpha$ with all components being rational), the operator $H(\theta)$ becomes a periodic operator. Suppose that, additionally, $\alpha=(p_1/q_1,\ldots,p_d/q_d)$, with $q_1,\ldots,q_d$ pairwise coprime. Let $q=q_1\ldots q_d$. There exists $\bn_q\in\Z^d$ such that $\bn_q\cdot \alpha=1/q\,\,\mathrm{mod}\,\,\Z$. The set $\mathcal C_{\alpha}$ becomes a union of $q$ arcs, with the set of gap labels being $0,1/q,2/q,\ldots,(q-1)/q$. The operators $H(\theta)$ and $H(\theta+1/q)$ are unitarily equivalent (here, $H(\theta+1/q)$ is defined using the translation by $\bn_q$). In terms of variable $x$, we also have $H(x)$ unitarily equivalent to $H(x+1/q)$, as well as $H(x-0)$ to $H(x-0-1/q)$. It is also convenient to represent $\mathcal C_{\alpha}$ in terms of variable $x$ as a disjoint union
$$
[0,1/q]\sqcup [1/q,2/q]\sqcup\ldots \sqcup[(q-1)/q,1],
$$
where the point $1/q$ on the interval $[0,1/q]$ denotes $H(1/q-0)$, and the one on $[1/q,2/q]$ corresponds to $H(1/q+0)$. 

Let $h(\theta)$ be $H(\theta)$ restricted to $\{0,1,2,\ldots,q_1-1\}\times\ldots\times \{0,1,2,\ldots,q_d-1\}$ with periodic boundary conditions (in other words, the operator with quasimomentum zero). It is well known that $\bar\sigma(h(\theta))\subset \bar\sigma(H(\theta))$. The following is the gap filling result for rational $\alpha$.
\begin{thm}
\label{th_gap_filling_rational}
Suppose that $\alpha=(p_1/q_1,\ldots,p_d/q_d)$, with $q_1,\ldots,q_d$ pairwise coprime. Define $h(\theta)$ as above, and suppose that the map $\tilde f$, defined in \eqref{eq_f_tilde_def}, is of non-zero topological degree. Then
$$
\bigcup_{\theta\in \mathcal S}\bar\sigma(h(\theta))=\rbar.
$$
\end{thm}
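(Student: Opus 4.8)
The plan is to reduce the statement to a topological fact about loops in $U(q)$, where $q=q_1\cdots q_d$. Since every component of $\alpha$ is rational with the $q_i$ pairwise coprime, the Chinese remainder theorem identifies $\{0,\dots,q_1-1\}\times\cdots\times\{0,\dots,q_d-1\}$ with $\Z/q\Z$ and the map $\bn\mapsto\{\bn\cdot\alpha\}$ with a bijection onto $\{0,1/q,\dots,(q-1)/q\}$, so $\mathcal C_\alpha$ has exactly $q$ gaps and $h(\theta)$ is a $q\times q$ (generalized, in the sense of Section 3) Hermitian matrix $\Delta_{\mathrm{per}}+\diag(V_\theta)$, where $V_\theta(\bn)\in\rbar$ is the potential value at site $\bn$. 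Its generalized Cayley transform $u(\theta):=U_{h(\theta)}$ lies in $U(q)$ and, by Proposition \ref{prop_strong_convergence} and the constructions of Section 3, depends continuously on $\theta\in\mathcal S$; under $t\mapsto i\frac{t-i}{t+i}$ the spectrum $\bar\sigma(h(\theta))$ corresponds to $\spec(u(\theta))\subset\T$, so the conclusion is equivalent to $\bigcup_{\theta\in\mathcal S}\spec(u(\theta))=\T$. If some $\zeta\in\T$ were omitted by all these spectra, then $u(\theta)-\zeta$ would be invertible for every $\theta$, and a continuous branch of $-i\log$ on $\T\setminus\{\zeta\}$ would write $u(\theta)=e^{iA(\theta)}$ with $A(\theta)$ self-adjoint and continuous, so $\theta\mapsto u(\theta)$ would be null-homotopic. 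Since $\pi_1(U(q))\cong\Z$ via the winding number of $\det$, it suffices to prove that $\theta\mapsto\det u(\theta)$ has nonzero winding number as $\theta$ runs once around $\mathcal S$.

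First I would study the individual potential coordinates. Fix a site $\bn$ and let $\theta$ traverse $\mathcal S$ once. On $\mathcal C_\alpha$ the associated parameter $x$ runs once around $[0,1)$, so there $V_\theta(\bn)=f(x+\bn\cdot\alpha)$ traces the continuous path given by $f|_{(0,1)}$, running from $f(0)=f(0+0)$ to $f(1-0)$; the single discontinuity of this circle value, which occurs at $x=\{-\bn\cdot\alpha\}$, is precisely one gap $\mathcal G_{\bn'}$ of $\mathcal C_\alpha$, and on that gap $V_\theta(\bn)$ moves continuously along the arc $\mathcal A$ from $f(1-0)$ to $f(0)$, directly from the definitions \eqref{eq_arc_parametrization}--\eqref{eq_f_tilde_def} of $\varphi_{\mathcal A}$ and $\varphi_{\bn'}$ and the fact that the endpoints of $\mathcal G_{\bn'}$ are $H(\bn'\cdot\alpha-0)$ and $H(\bn'\cdot\alpha)$. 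Concatenating, $\theta\mapsto V_\theta(\bn)$ is, up to an orientation-preserving reparametrization, the circle map $\tilde f$; hence it has degree $m:=\deg\tilde f\neq0$, independently of $\bn$.

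Next I would compute the winding of $\det u$ by deforming away the hopping. Set $h_s(\theta):=s\,\Delta_{\mathrm{per}}+\diag(V_\theta)$ for $s\in[0,1]$ (at an infinite site the $\Delta_{\mathrm{per}}$-term is automatically absent by the convention of Section 3, so this is well defined) and $u_s(\theta):=U_{h_s(\theta)}\in U(q)$. Joint continuity of $(s,\theta)\mapsto u_s(\theta)$, which again follows from the constructions of Section 3 applied to these scaled finite matrices, makes the winding number of $\theta\mapsto\det u_s(\theta)$ independent of $s$. At $s=0$ the matrix is diagonal, $\det u_0(\theta)=i^q\prod_{\bn}g(V_\theta(\bn))$ with $g(\lambda)=\frac{\lambda-i}{\lambda+i}$ a homeomorphism $\rbar\to\T$ of degree $\pm1$ (and $g(\infty)=1$), so the winding of $\det u_0$ equals $\pm\sum_{\bn}\deg(\theta\mapsto V_\theta(\bn))=\pm qm\neq0$ by the previous paragraph, and therefore so is the winding of $\det u=\det u_1$. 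This finishes the argument.

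The genuinely delicate point is the second paragraph: one has to trace through the construction of $\mathcal C_\alpha$, of its gaps, and of the parametrizations $\varphi_{\mathcal A}$ and $\varphi_{\bn}$ carefully enough to be sure that the jump of $f$ seen at each relevant site is filled in by exactly one copy of $\mathcal A$, traversed in the correct direction, so that the loop $\theta\mapsto V_\theta(\bn)$ is honestly $\tilde f$ and not some reparametrization that could be null-homotopic or orientation-reversed. The remaining ingredients---joint continuity of $u_s(\theta)$ at the parameters (if any) where a potential value equals $\infty$, and the correspondence $\spec(u(\theta))\leftrightarrow\bar\sigma(h(\theta))$---are routine given Section 3.
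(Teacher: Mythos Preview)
Your proposal is correct and follows essentially the same route as the paper: reduce to showing that $\theta\mapsto\det U_{h(\theta)}$ has nonzero winding number (the paper packages the ``no missing arc $\Rightarrow$ nontrivial winding'' implication as Proposition~\ref{prop_winding_number}, you give the equivalent logarithm argument), then deform $\Delta$ to $0$ and read off the diagonal contribution. Your second paragraph spells out explicitly what the paper states in one sentence---namely that each diagonal entry $V_\theta(\bn)$, as $\theta$ traverses $\mathcal S$, traces exactly one copy of $\tilde f$---so the two proofs are the same in substance, with yours being more detailed.
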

Before proceeding to the proof, we will need to use a well known fact about families of unitary operators. Let $u\colon \T\to U(N)$ be a continuous map from the circle to the group of $N\times N$ unitary matrices. The map $t\mapsto \det(u(t))$ is a map from $\T$ to the unit circle in $\mathbb C$.
\begin{prop}
\label{prop_winding_number}
Let $u\colon \T\to U(N)$ be a continuous map from the circle to the group of $N\times N$ unitary matrices, and suppose that the map $t\mapsto \det(u(t))$, considered as a map from $\T$ to $\C\setminus\{0\}$, has a non-zero winding number around the origin. Then
\bee
\label{eq_union_unitary}
\bigcup_{t\in \T}\sigma(u(t))=\T.
\ene
\end{prop}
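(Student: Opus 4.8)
The plan is to argue by contradiction. Suppose that some point $\lambda_0 \in \T$ does not lie in $\bigcup_{t\in\T}\sigma(u(t))$; I will derive that the winding number of $t\mapsto \det(u(t))$ must be zero. The key observation is that if $\lambda_0 \notin \sigma(u(t))$ for all $t$, then for every $t\in\T$ the unitary matrix $u(t)$ has all its eigenvalues in the arc $\T\setminus\{\lambda_0\}$, which is an arc not equal to the whole circle. First I would fix a continuous branch of logarithm on this arc: choose $\theta_0$ with $\lambda_0 = e^{i\theta_0}$ and write every eigenvalue of $u(t)$ in the form $e^{i\phi}$ with $\phi\in(\theta_0,\theta_0+2\pi)$. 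This is possible precisely because $\lambda_0$ is never an eigenvalue.

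Next I would use the spectral theorem for each $u(t)$ to write $u(t) = e^{iA(t)}$ where $A(t)$ is the self-adjoint matrix with $\spec(A(t))\subset(\theta_0,\theta_0+2\pi)$ obtained via the functional calculus $A(t) = g(u(t))$, with $g\colon \T\setminus\{\lambda_0\}\to(\theta_0,\theta_0+2\pi)$ the continuous (in fact real-analytic) inverse of $\phi\mapsto e^{i\phi}$. Since $u$ is continuous in $t$ and $g$ is continuous on the spectrum, $t\mapsto A(t)$ is a continuous map from $\T$ into the space of Hermitian $N\times N$ matrices. Now $\det(u(t)) = e^{i\tr A(t)}$, and $t\mapsto \tr A(t)$ is a continuous real-valued function on the circle $\T$; hence it lifts the map $t\mapsto\det(u(t))$ to $\R$. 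A continuous map $\T\to\T$ that admits a continuous real lift has winding number zero. This contradicts the hypothesis, completing the proof.

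The routine points to fill in are: continuity of the functional calculus $u\mapsto g(u)$ in operator norm when $g$ is continuous on a neighborhood of the (uniformly separated from $\lambda_0$, by compactness of $\T$) spectra — this is standard, e.g. via approximating $g$ by polynomials or by noting that the spectral projections depend continuously on $u$ when the relevant eigenvalue clusters stay away from $\lambda_0$; and the elementary topological fact that $\deg(e^{i\rho(\cdot)}) = 0$ for any continuous $\rho\colon\T\to\R$. I do not expect a serious obstacle here: the only mild subtlety is making sure the branch of logarithm can be chosen \emph{globally} in $t$, which is exactly what the assumption "$\lambda_0$ is in no spectrum" guarantees, since it keeps the spectra of all the $u(t)$ inside one fixed proper arc. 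The argument works verbatim with $\T$ replaced by any connected compact parameter space in place of the circle, but we only need the circle here.
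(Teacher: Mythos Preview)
Your proof is correct and follows essentially the same contrapositive approach as the paper: if some $\lambda_0$ is missed by every $\sigma(u(t))$, then the determinant map must have winding number zero. The paper phrases this by observing that the spectra then lie in a contractible arc so $u$ is null-homotopic, while you make the same idea explicit via the continuous branch of logarithm $A(t)=g(u(t))$ and the real lift $\tr A(t)$ of $\det u$.
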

\begin{proof}
It is easy to see that, if $\eqref{eq_union_unitary}$ does not hold, then there is an open arc of $\T$ such that $\sigma(u(t))$ avoids that arc, and the complement of that arc can be continuously contracted into a point. As a consequence, the map $u$ is homotopic to a constant, and the corresponding determinant map will have a zero winding number.
\end{proof}
{\noindent \bf Proof of Theorem \ref{th_gap_filling_rational}.} Consider the map $\theta\mapsto U_{h(\theta)}$. In view of the above, it is sufficient to show that the corresponding determinant map has a non-zero winding number. Since the said winding number is a topological invariant, one can replace $\Delta$ by $s\Delta$, where $s$ continuously changes from $1$ to $0$ and reduce the original statement to the case of a diagonal operator. In that case, each diagonal entry can be considered as a scalar unitary operator, with the winding number of the determinant equal to that of the map $\tilde f$. As a consequence, each diagonal entry will produce an equal non-zero contribution to the total winding number, which completes the proof.

\vskip 1mm

{\noindent \bf Conclusion of the proofs of Theorems \ref{th_gap_filling} and \ref{th_gap_filling_2}}. Theorem \ref{th_gap_filling} can now be obtained as a consequence of Theorem \ref{th_gap_filling_rational} and part 3) of Theorem \ref{th_continuity_of_spectra}. In view of Remark \ref{remark_different_theta}, the same arguments can also be applies in the simpler case of Theorem \ref{th_gap_filling_2}, with the convention $\mathcal C_{\alpha}=\mathcal S_{\alpha}$ associated with the original set of values of $x\in [0,1)$.\,\,\qed

\vskip 1mm

{\noindent \bf Proof of Corollary \ref{cor_all_eigenvalues}.} 
From the proof of Theorem \ref{th_continuity_of_spectra}, we see that, for every $E\in \R$, there exists $\theta\in \mathcal S$ and a bounded solution to the eigenvalue equation $H(\theta)\psi=E\psi$. If $\theta\in \mathcal C_{\alpha}$, this solution will decay exponentially from Proposition \ref{prop_monotone_localization} applied to the original operator $H$ or the one with $f$ replaced by $f(\cdot-0)$ (see Remark \ref{rem_localizations}). If $\theta$ is in some gap, one can also apply Proposition \ref{prop_monotone_localization} with $f$ replaced by $f_t$.\,\,\qed
\end{document}